\newtheorem{theorem}{Theorem}
\newtheorem{corollary}[theorem]{Corollary}
\newtheorem{definition}[theorem]{Definition}
\newtheorem{example}[theorem]{Examples}
\newtheorem{lemma}[theorem]{Lemma}
\newtheorem{proposition}[theorem]{Proposition}
\newtheorem{remark}[theorem]{Remark}
\newenvironment{proof}[1][Proof]{\noindent\textbf{#1.} }{\ \rule{0.5em}{0.5em}}
\begin{document}

\title{Local real analysis \\in locally homogeneous spaces}
\author{Marco Bramanti and Maochun Zhu}
\maketitle

\begin{abstract}
We introduce the concept of locally homogeneous space, and prove in this
context $L^{p}$ and $C^{\alpha}$ estimates for singular and fractional
integrals, as well as $L^{p}$ estimates on the commutator of a singular or
fractional integral with a $BMO$ or $VMO$ function. These results are
motivated by local a-priori estimates for subelliptic equations.

\end{abstract}

\section{Introduction}

\subsubsection*{Motivation}

The theory of singular integrals has been usefully applied to local a priori
estimates for PDEs in several contexts of increasing generality, in the last
decades. The abstract framework of spaces of homogeneous type, introduced by
Coifman-Weiss in \cite{CW}, has proved to be a suitable framework in many
cases, so far: we have a set (which in concrete applications is usually a
bounded domain of $\mathbb{R}^{n}$), a distance or a quasidistance adapted to
the differential operator (the Euclidean distance for classical elliptic
equations, parabolic distance for parabolic equations, Carnot-Carath\'{e}odory
distance -or some variation of it- for operators built on H\"{o}rmander's
vector fields -see \cite{NSW}-, and so on), and a measure (usually the
Lebesgue measure) which is \emph{doubling} with respect to the metric balls.
In these situations the quasidistance $\rho$ is usually defined in some
$\Omega_{0}$ which is either the whole $\mathbb{R}^{n}$ or some domain which
is larger than the bounded domain $\Omega$ where we want to prove our
estimates. Since the balls $B\left(  x,r\right)  $ are, by definitions,
subsets of $\Omega_{0}$, that is%
\[
B\left(  x,r\right)  =\left\{  y\in\Omega_{0}:\rho\left(  x,y\right)
<r\right\}  ,
\]
if we want to apply the theory of spaces of homogeneous type to the set
$\Omega,$ the doubling condition we have to check is%
\begin{equation}
\mu\left(  B\left(  x,2r\right)  \cap\Omega\right)  \leq c\mu\left(  B\left(
x,r\right)  \cap\Omega\right)  \text{ for any }x\in\Omega,r>0.
\label{doubling 1}%
\end{equation}
On the other hand, the doubling condition that reasonable $\rho$ and $\mu$
usually satisfy is%
\begin{equation}
\mu\left(  B\left(  x,2r\right)  \right)  \leq c\mu\left(  B\left(
x,r\right)  \right)  \text{ for any }x\in\Omega,0<r<r_{0} \label{doubling 2}%
\end{equation}
for some small $r_{0}$. Passing from (\ref{doubling 2}) to (\ref{doubling 1})
requires some \textquotedblleft smoothness\textquotedblright\ property of
$\partial\Omega,$ a property which, however, is not a natural requirement for
our original local problem, but more a technical complication due to the fact
that, in order to apply the theory of spaces of homogeneous type, we are
regarding the set $\Omega,$ which in our problem is a subset of a larger
universe, as the universe itself. If $\mu$ is the Lebesgue measure and $\rho$
is the Euclidean distance, in order to fulfil (\ref{doubling 1}) it is enough
to require $\partial\Omega$ Lipschitz; if $\rho$ is the
Carnot-Carath\'{e}odory distance induced by a set of H\"{o}rmander's vector
fields $X_{1},X_{2},...,X_{q}$ which is naturally attached to the study of the
operator
\begin{equation}
\sum_{i=1}^{q}X_{i}^{2} \label{Hormander squares}%
\end{equation}
then (\ref{doubling 1}) is satisfied for instance when $\Omega$ is itself a
metric ball, as was first proved in \cite{FL} (see also \cite[Lemma 4.2]{BB3}
for a more self-contained proof); this result basically relies on the fact
that this quasidistance has the \emph{segment property}, which essentially
means that for any couple of points $x_{1},x_{2}$ at distance $r$ and for any
number $\delta<r$ there exists a point $x_{0}$ having distance $\delta$ from
$x_{1}$ and $r-\delta$ from $x_{2}$. An analog result of regularity for the
metric ball has been proved in \cite{BB4} for the \textquotedblleft parabolic
Carnot-Carath\'{e}odory distance\textquotedblright\ attached to the operator
\[
\partial_{t}-\sum_{i=1}^{q}X_{i}^{2}.
\]
This distance has no longer the segment property, but the simple way in which
the time variable is involved allows to do explicit (but nontrivial!)
computations and show that when $\Omega$ is a metric ball, (\ref{doubling 1})
still holds.

If we now pass to consider H\"{o}rmander's operators of the kind%
\begin{equation}
\sum_{i=1}^{q}X_{i}^{2}+X_{0} \label{Hormander drift}%
\end{equation}
(where the drift term $X_{0}$ is part of the set $X_{0},X_{1},...,X_{q}$ which
satisfies H\"{o}rmander's condition), the corresponding quasidistance $\rho$
is the one defined by Nagel-Stein-Wainger in \cite{NSW}; this quasidistance
has been much less studied than the usual Carnot-Carath\'{e}odory distance (we
can quote, in the context of nonsmooth H\"{o}rmander's vector fields, the two
papers \cite{BBP1}, \cite{BBP2}). Although a local doubling condition
(\ref{doubling 2}) holds, as proved in \cite{NSW}, this quasidistance does not
satisfy the segment property and, as far as we know, a condition of kind
(\ref{doubling 1}) has never been proved for $\Omega$ a metric ball, or for
any other special kind of bounded domain $\Omega.$ Therefore the existing
results do not allow to apply the theory of spaces of homogeneous type to the
space $\left(  \Omega,\rho,\mu\right)  $ when $\Omega$ is some bounded domain
of $\mathbb{R}^{n},$ $\rho$ is the Nagel-Stein-Wainger distance attached to
the set of H\"{o}rmander's vector fields $X_{1},X_{2},...,X_{0}$ (with $X_{0}$
of \textquotedblleft weight\textquotedblright\ two), and $\mu$ the Lebesgue
measure. This problem has been sometimes overlooked, apparently; for instance,
in the famous paper \cite{RS}, $L^{p}$ estimates are proved for operators
(\ref{Hormander squares}), and stated for operators (\ref{Hormander drift}),
without any reference to the mere existence of the problem of assuring the
validity of condition (\ref{doubling 1}). On the other hand, as already
suggested, one feels that if our final goal is that of proving local
estimates, no kind of \textquotedblleft smoothness\textquotedblright\ of the
domain $\Omega$ with respect to the quasidistance should be a crucial
requirement; in other words, it is reasonable that this difficulty could be
bypassed. \emph{The basic scope of the present paper is to build up a local
theory of singular integrals which does not require checking condition
(\ref{doubling 1}), when (\ref{doubling 2}) is known.}

Another problem of a-priori estimates for PDEs in which proving that
(\ref{doubling 2}) implies (\ref{doubling 1}) for some domain $\Omega$ appears
troublesome has been studied in \cite{BCLP}. In that paper the Authors
consider a class of Kolmogorov-Fokker-Planck operators on $\mathbb{R}%
^{n}\times\left[  -1,1\right]  ,$ for which the natural\ quasidistance is a
function $\rho$ (not equivalent to the Carnot-Carath\'{e}odory distance
induced by a system of H\"{o}rmander's vector fields), which satisfies the
quasi-triangle inequality on any compact set and satisfies a local doubling
condition (\ref{doubling 2}) for any bounded $\Omega$; again, however, one has
no idea of how to prove (\ref{doubling 1}) for some particular bounded
$\Omega.$ In that case, the Authors overcame the problem by applying an ad-hoc
theory of singular integrals in \emph{nondoubling }spaces, developed in
\cite{B2}. The resort to theories of singular integrals in nondoubling
contexts, as have been developed in the last decade by Tolsa,
Nazarov-Treil-Volberg, and other authors (see for instance the book \cite{V}
and references therein), is actually an alternative possibility in order to
bypass (\ref{doubling 1}). However, and here another motivation of the present
paper comes in, when proving a-priori $L^{p}$ estimates for PDEs with $VMO$
coefficients (a line of research which started with the papers by
Chiarenza-Frasca-Longo \cite{CFL1}, \cite{CFL2} and developed in several
directions in the last 20 years), one needs a series of results about
commutators of singular and fractional integrals with $BMO$ functions, which
do not have a natural counterpart in the nondoubling context; more precisely,
results of this kind have been actually proved by Tolsa \cite{T}, in the
context of $\left(  \mathbb{R}^{n},d,d\mu\right)  $, where $\mu$ is a very
general Radon measure, but $d$ is the Euclidean distance. Since the extension
of these deep results to the case of a general quasidistance $d$ appears far
from being obvious, it seems easier and more natural for the problem under
exam to establish these commutator theorems in the framework of a theory of
singular integrals in a locally doubling context (instead than in a
nondoubling one). More generally, we think that the idea of proving a local
version of some basic results about singular integrals is a very natural one,
and we feel that these results can be of some interest also for other applications.

\subsubsection*{Main results}

In this paper we will prove, in the context of \emph{locally homogeneous
spaces }(which will be defined in the next section), results of continuity, in
$L^{p}$ and $C^{\alpha}$ spaces, for singular and fractional integrals, as
well as $L^{p}$ estimates for the commutator of a singular or fractional
integral with the multiplication with a $BMO$ function. Also, we will state
these commutator theorems in a form suitable to prove the smallness of the
$L^{p}$ norm of the commutator on a small ball, whenever the $BMO$ function
actually belongs to $VMO.$ This localized version of the commutator theorem
for a $VMO$ function, in the original Euclidean case (exploited in
\cite{CFL1}, \cite{CFL2}) relies on the possibility of approximating a $VMO$
function by a uniformly continuous function in $BMO$ norm, and on the
possibility of extending to the whole space a uniformly continuous function
defined on a ball, preserving the continuity modulus. Here we manage to
establish directly the localized version of the commutator theorems for a
$VMO$ function, without the necessity of proving the aforementioned
approximation and extension results. Therefore, under this respect, our
approach conceptually simplifies also the Euclidean case. We will also deal,
very briefly, with a local version of the maximal function and its $L^{p}$
continuity, another tool which is useful in the alluded applications.

Our main results are: Theorems \ref{Theorem L^p C^eta} and \ref{frac lp-lq} in
Section \ref{section singular}; Theorems \ref{Thm commutator},
\ref{Thm comm frac}, \ref{Thm comm pos} in Section \ref{Section Commutator};
Theorem \ref{Thm maximal} in Section \ref{Section Maximal}. We also think that
the basic theory developed in Sections \ref{section abstract}%
-\ref{Section Dyadic}, particularly Theorems \ref{Main Thm} and \ref{Thm F},
could be useful to prove further results in the same spirit.

All the results proved in this paper will be used in the proof of $L^{p}$ and
$C^{\alpha}$ estimates for nonvariational operators structured on
H\"{o}rmander's operators of type (\ref{Hormander drift}), that is for
operators of the form%
\[
\sum_{i.j=1}^{q}a_{ij}\left(  x\right)  X_{i}X_{j}+a_{0}\left(  x\right)
X_{0}%
\]
where $X_{0},X_{1},...,X_{q}$ are a system of smooth H\"{o}rmander's vector
fields in a bounded domain of $\mathbb{R}^{n}$ ($n>q+1$), $\left\{
a_{ij}\right\}  $ is a uniformly positive matrix with bounded entries, $a_{0}$
is bounded and bounded away from zero and all the coefficients $a_{ij},a_{0}$
belong to the suitable function space $VMO$ or $C^{\alpha}$ (respectively, to
prove $L^{p}$ and $C^{\alpha}$ estimates on $X_{i}X_{j}u$).\ These estimates
are proved in a separate paper \cite{BZ}, and generalize the results proved in
\cite{bb2} and \cite{BB4} when the drift $X_{0}$ is lacking.

\subsubsection*{Strategy}

The basic idea, in order to bypass the necessity of checking condition
(\ref{doubling 1}) instead of (\ref{doubling 2}), is to adapt the abstract
construction of dyadic cubes in spaces of homogeneous type carried out by
Christ in \cite{C}. In that paper, the Author shows how in any space of
homogeneous type one can construct, for any $k\in\mathbb{Z}$, a family of
\textquotedblleft dyadic cubes\textquotedblright\ of diameter comparable to
$\delta^{k}$ (with $\delta$ a small fixed number).\ Actually these
\textquotedblleft cubes\textquotedblright\ are open sets, defined by an
abstract construction, which nevertheless share with the classical dyadic
cubes all the basic properties. The relevant fact for us is that each of these
cubes $Q$ is, in turn, a space of homogeneous type, that is in \cite{C}\ the
Author proves that (\ref{doubling 1}) actually holds for $\Omega=Q$. Here we
adapt the previous construction in a local setting. Given our space $\Omega,$
which is seen as the union of an increasing sequence of bounded subsets
$\Omega_{n},$ we construct for each $n$ and each scale $k=1,2,3...$, a family
of (small) dyadic cubes essentially covering $\Omega_{n}$ and contained in
$\Omega_{n+1};$ each of these cubes can still be proved to be a space of
homogeneous type; moreover, the same is true for any finite union of dyadic
cubes of the same scale $k$. The idea is then to apply known results for
spaces of homogeneous type to suitable unions of dyadic cubes which cover a
fixed small ball, and derive the corresponding result on the ball. Since
dyadic cubes are abstract objects, which in concrete applications of the
theory cannot be explicitly exhibited, our job is to use dyadic cubes just as
a tool, but to state and prove all our results in the language of balls, to
make them easily applicable.

To make more transparent the strategy of our construction, let us point out
what follows.

We will show that for any $n$ we can cover $\Omega_{n}$ with a finite union of
balls of any prescribed small size, and for each of these balls we can
construct a space of homogeneous type $F$ which is contained in $\Omega
_{n+1},$ \textquotedblleft almost contains\textquotedblright\ this ball $B$,
and is comparable to $B$, both in measure and in diameter. This
\textquotedblleft almost inclusion\textquotedblright\ is made precise in two ways:

1) $F\supset B\setminus E$ where $E$ is a zero measure set; this inclusion is
enough to handle $L^{p}$ estimates or more generally estimates which involve
integral norms;

2) the closure of $F$ contains $B$; this inclusion is enough to handle
$C^{\alpha}$ estimates, or more generally estimates which involve moduli of
uniform continuity of the functions.

The idea of exploiting Christ's construction of dyadic cubes to prove results
in a locally doubling context has been already used by Carbonaro, Mauceri,
Meda in \cite{M}; their context, however, is different from ours: in that
paper the Authors consider a situation where the measure of balls grows fast
at infinity, so that the doubling condition holds for balls of radius $r\leq
r_{0},$ for any fixed $r_{0}$; on the other hand, these Authors have not our
problem of keeping far from the boundary of a bounded domain, to avoid
intersections. Moreover, they use dyadic cubes to adapt the proofs which hold
in the doubling case, while our strategy is not to adapt the existing proofs
but to apply the existing results which hold in the doubling case.

The construction of a suitable family of spaces of homogeneous type is not the
only problem to overcome in our situation. The possible overlapping of the
balls $B\left(  x,r\right)  $ with the boundary of the considered domain
creates problems under at least other two regards. The first is the validity
of a suitable cancellation property for the kernel of singular integral
operator: if we know that a singular kernel has a bounded integral over small
spherical shells, this does not imply the persistence of this property when we
integrate over the intersection of the spherical shell with some fixed domain.
This problem will be solved using suitable cutoff functions. The second
problem is to suitably define and handle $BMO$ and $VMO$ spaces, avoiding to
take the average of a function over the intersection of $B\left(  x,r\right)
$ with a fixed domain. To this aim we will introduce a $BMO_{loc}$ space
adapted to a couple of domains $\Omega_{n}\subset\Omega_{n+1},$ which in our
context is a natural notion, and we will show which relation this space has
with the standard $BMO$.

\subsubsection*{Plan of the paper}

In Section \ref{section abstract} we state precisely our definition of locally
homogeneous spaces and draw some first consequences of the definition, in
terms of topology and measure. Section \ref{Section Dyadic} contains the
construction of dyadic cubes and the proof of their relevant properties which
will allow to apply the theory of spaces of homogeneous type. In Section
\ref{Section Holder} we build, in a fairly standard way, H\"{o}lder continuous
cutoff functions, another tool which will be useful in the following. In
Section \ref{section singular} we prove our local $L^{p}$ and $C^{\alpha}$
continuity results for singular and fractional integrals. In Section
\ref{Section BMO} we introduce $BMO$ and $VMO$ spaces, both in the standard
and in a local version, and study the relation between the two concepts. In
Section \ref{Section Commutator} we prove local $L^{p}$ estimates on the
commutator of a singular or fractional integral with a $BMO$ or $VMO$
function. In Section \ref{Section Maximal} we deal with the local maximal
operator and its $L^{p}$ continuity. Finally, in Section
\ref{section quasisymmetric} we extend the results of Sections
\ref{section singular} to \ref{Section Maximal} to the more general situation
where the local quasidistance is quasisymmetric but not symmetric. An Appendix
collects all the known results about spaces of homogeneous type which we need
throughout the paper.

\bigskip

\textbf{Acknowledgements.} This research was mainly carried out while Maochun
Zhu was visiting the Department of Mathematics of Politecnico di Milano, which
we wish to thank for the hospitality. The project was supported by the
National Natural Science Foundation of China (Grant No. 10871157), Specialized
Research Fund for the Doctoral Program of Higher Education (No. 200806990032).

\section{The abstract framework of locally homogeneous
spaces\label{section abstract}}

We are going to state the assumptions which will define the notion of locally
homogeneous space. (For comparison, the standard definition of space of
homogeneous type is recalled in the Appendix, Section \ref{section appendix}).

(H1) Let $\Omega$ be a set, endowed with a function $\rho:\Omega\times
\Omega\rightarrow\lbrack0,\infty)$ such that for any $x,y\in\Omega$:

(a) $\rho\left(  x,y\right)  =0\Leftrightarrow x=y;$

(b) $\rho\left(  x,y\right)  =\rho\left(  y,x\right)  .$

For any $x\in\Omega,r>0,$ let us define the ball%
\[
B\left(  x,r\right)  =\left\{  y\in\Omega:\rho\left(  x,y\right)  <r\right\}
.
\]
These balls can be used to define a topology in $\Omega$, saying that
$A\subset\Omega$ is open if for any $x\in A$ there exists $r>0$ such that
$B\left(  x,r\right)  \subset A$. Also, we will say that $E\subset\Omega$ is
bounded if $E$ is contained in some ball.

Let us assume that:

(H2) (a) the balls are open with respect to this topology;

(H2) (b) for any $x\in\Omega$ and $r>0$ the closure of $B\left(  x,r\right)  $
is contained in $\left\{  y\in\Omega:\rho\left(  x,y\right)  \leq r\right\}
.$

We will prove in a moment that the validity of conditions (H2) (a) and (b) is
equivalent to the following:

(H2') $\rho\left(  x,y\right)  $ is a continuous function of $x$ for any fixed
$y\in\Omega.$

(H3) Let $\mu$ be a positive regular Borel measure in $\Omega.$

(H4) Assume there exists an increasing sequence $\left\{  \Omega_{n}\right\}
_{n=1}^{\infty}$ of bounded measurable subsets of $\Omega,$ such that:%

\begin{equation}%
{\displaystyle\bigcup_{n=1}^{\infty}}
\Omega_{n}=\Omega\label{Hp 0}%
\end{equation}
and such for, any $n=1,2,3,...$:

(i) the closure of $\Omega_{n}$ in $\Omega$ is compact;

(ii) there exists $\varepsilon_{n}>0$ such that%
\begin{equation}
\left\{  x\in\Omega:\rho\left(  x,y\right)  <2\varepsilon_{n}\text{ for some
}y\in\Omega_{n}\right\}  \subset\Omega_{n+1}; \label{Hp 1}%
\end{equation}
\qquad

(H5) there exists $B_{n}\geq1$ such that for any $x,y,z\in\Omega_{n}$%
\begin{equation}
\rho\left(  x,y\right)  \leq B_{n}\left(  \rho\left(  x,z\right)  +\rho\left(
z,y\right)  \right)  ; \label{Hp 2}%
\end{equation}

(H6) there exists $C_{n}>1$ such that for any $x\in\Omega_{n},0<r\leq
\varepsilon_{n}$ we have%
\begin{equation}
\text{ }0<\mu\left(  B\left(  x,2r\right)  \right)  \leq C_{n}\mu\left(
B\left(  x,r\right)  \right)  <\infty. \label{Hp 3}%
\end{equation}
(Note that for $x\in\Omega_{n}$ and $r\leq\varepsilon_{n}$ we also have
$B\left(  x,2r\right)  \subset\Omega_{n+1}$).

\begin{definition}
\label{Def loc hom space}We will say that $\left(  \Omega,\left\{  \Omega
_{n}\right\}  _{n=1}^{\infty},\rho,\mu\right)  $ is a \emph{locally
homogeneous space }if assumptions (H1) to (H6) hold.
\end{definition}

\textbf{Dependence on the constants. }The numbers $\varepsilon_{n},B_{n}%
,C_{n}$ will be called \textquotedblleft the constants of $\Omega_{n}%
$\textquotedblright. It is not restrictive to assume that $B_{n},C_{n}$ are
nondecreasing sequences, and $\varepsilon_{n}$ is a nonincreasing sequence.
Throughout the paper our estimates, for a fixed $\Omega_{n},$ will often
depend not only on the constants of $\Omega_{n},$ but also (possibly) on the
constants of $\Omega_{n+1},\Omega_{n+2},\Omega_{n+3}$. We will briefly say
that \textquotedblleft a constant depends on $n$\textquotedblright\ to mean
this type of dependence.

\bigskip

In the language of \cite{CW}, $\rho$ is a \emph{quasidistance} in each set
$\Omega_{n};$ we can also say that it is a \emph{local quasidistance in}
$\Omega.$ We stress that the two conditions appearing in (H2) are logically
independent each from the other, and they do not follow from (\ref{Hp 2}),
even when $\rho$ is a quasidistance\emph{ }in $\Omega,$ that is when
$B_{n}=B>1$ for any $n$. If, however, $\rho$ is a \emph{distance }in $\Omega$,
that is $B_{n}=1$ for any $n$, then (H2) is automatically fulfilled.

The continuity of $\rho$ also implies that (\ref{Hp 2}) still holds for
$x,y,z\in\overline{\Omega}_{n}.$ We will sometimes exploit this fact.

Also, note that $\mu\left(  \Omega_{n}\right)  <\infty$ for every $n$, since
$\overline{\Omega}_{n}$ is compact. (This follows by the regularity of $\mu,$
or also from the finiteness of the measure of balls, see (H6), since
$\overline{\Omega}_{n}$ can be covered by a finite number of small balls).

We also point out that assuming $\mu$ \emph{regular }(see (H3)) is not really
necessary, as will be explained after Proposition \ref{Prop density}; however,
since this assumption is harmless in the applications we are interested in, we
prefer to keep it, in order to avoid the necessity of entering into annoying details.

\begin{example}
(i)\ If $\left(  \Omega,\rho,d\mu\right)  $ is a bounded space of homogeneous
type in the sense of Coifman-Weiss \cite{CW} (the definition will be recalled
in the Appendix), the above conditions are fulfilled choosing $\Omega
_{n}\equiv\Omega$ and $B_{n}=B\geq1$ for any $n$.

(ii) In the applications to subelliptic equations that we have in mind, and
will be dealt in \cite{BZ}, $\Omega$ is a bounded domain of $\mathbb{R}^{N}$
and $\Omega_{n}$ an increasing sequence of bounded domains, with $\Omega
_{n}\Subset\Omega_{n+1}\Subset\Omega$ for any $n$; $\rho$ is the
Nagel-Stein-Wainger distance induced by a family $X_{0},X_{1},X_{2},...,X_{q}$
of H\"{o}rmander's vector fields, where $X_{0}$ has weight two, $\mu$ the
Lebesgue measure in $\mathbb{R}^{N}$.

(iii) The same setting of (ii) fits the theory of \emph{nonsmooth
H\"{o}rmander's vector fields}, as dealt in \cite{BBP1}, \cite{BBP2}.

Note that in the situations (ii)-(iii) $\rho$ is actually a distance, which
induces the Euclidean topology, and (H6) is a known result.

(iv) In the situation considered in \cite{BCLP}, $\Omega=\mathbb{R}^{N}%
\times\left[  -1,1\right]  ,\Omega_{n}=B_{n}\times\left[  -1,1\right]  $ where
$B_{n}$ is the Euclidean ball of center $0$ and radius $n$ in $\mathbb{R}%
^{N},\rho\left(  z,\zeta\right)  =\left\Vert \zeta^{-1}\circ z\right\Vert $
where $\circ$ is a Lie group operation related to the differential operator
\[
L=\sum_{i,j=1}^{p_{0}}a_{ij}\partial_{x_{i}x_{j}}^{2}+\sum_{i,j=1}^{N}%
b_{ij}x_{i}\partial_{x_{j}}-\partial_{t}\text{ \ \ }\left(  \text{where }%
p_{0}<N\right)
\]
which is under study, while $\left\Vert \cdot\right\Vert $ is the homogeneous
norm defined by the family of dilations related to another differential
operator, which is the \textquotedblleft principal part\textquotedblright\ of
$L.$ Therefore $\rho$ is neither the usual distance considered in Carnot
groups, nor is (equivalent to) the Carnot-Carath\'{e}odory distance induced by
the vector fields; $\rho$ satisfies (H5) and induces the Euclidean topology;
from its analytical definition it is clear that $\rho$ is continuous, hence
(H2) is fulfilled; also (H6) can be proved. More precisely, the function
$\rho$ considered in \cite{BCLP} is not symmetric but satisfies a weaker
condition: for any $n$ there exists $A_{n}>1$ such that%
\begin{equation}
\rho\left(  x,y\right)  \leq A_{n}\rho\left(  y,x\right)  \text{ for any
}x,y\in\Omega_{n}. \label{quasisymmetry}%
\end{equation}
This motivates a further extension of our theory, as we will explain in a moment.
\end{example}

\begin{remark}
[Extension to quasisymmetric functions]As anticipated in the above Example
(iv), in view of some applications it is desirable to consider the more
general setting in which $\rho$ is not assumed symmetric, but satisfies
condition (\ref{quasisymmetry}). However, developing the whole theory of
Section \ref{Section Dyadic} under this weaker assumption would make our
computations considerably heavier. Instead, it is much easier to develop first
the theory under the symmetry assumption, and then to show that our main
results about singular and fractional integrals still hold if we replace the
symmetry condition with (\ref{quasisymmetry}). This extension will be
discussed in Section \ref{section quasisymmetric}.
\end{remark}

In the rest of this section we will make some remarks and prove some easy
facts related to topology and measure in a locally homogeneous space.

Since, by (H2) (a), the balls are open, for each $x\in\Omega$ the balls
$B\left(  x,r\right)  $ satisfy the axioms of complete system of neighborhoods
of $x;$ hence the topological space $\Omega$ is first countable, and
continuity and closedness can be discussed by means of sequences of points.
Let us prove the following fact, that we have claimed before.

\begin{proposition}
\label{Prop topology}Conditions (H2) are equivalent to condition (H2').
\end{proposition}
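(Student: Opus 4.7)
The strategy is to prove the two implications of (H2) $\Leftrightarrow$ (H2') separately. Throughout I let $\tau$ denote the topology defined in the statement (namely, $A\in\tau$ iff every $x\in A$ admits some $B(x,r)\subset A$); this is a genuine topology irrespective of whether balls are themselves open.

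Direction (H2') $\Rightarrow$ (H2) is essentially formal. Using the symmetry $\rho(x,y)=\rho(y,x)$ of (H1)(b), rewrite
\[
B(x_0,r_0)=\{z\in\Omega:\rho(z,x_0)<r_0\},
\]
so that $B(x_0,r_0)$ is the preimage of $[0,r_0)$ under the $\tau$-continuous map $z\mapsto\rho(z,x_0)$, hence $\tau$-open; this is (H2)(a). For (H2)(b), note that $\{y\in\Omega:\rho(x_0,y)\leq r_0\}$ is the preimage of the closed set $[0,r_0]$ under the same continuous function, hence $\tau$-closed, and it contains $B(x_0,r_0)$, so it contains $\overline{B(x_0,r_0)}$.

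For the converse (H2) $\Rightarrow$ (H2'), I would first observe that $\tau$ is first countable at every point $z$: each $B(z,1/k)$ is a $\tau$-neighborhood of $z$ since $B(z,s)\subset B(z,1/k)$ whenever $s\leq 1/k$, exhibiting an open-around-$z$ set inside it, and any $\tau$-open $U\ni z$ contains some $B(z,s)$ and thus some $B(z,1/k)$. Continuity of $\rho(\cdot,y)$ at $x$ can therefore be tested on sequences. Fix $y$ and $x_n\to x$ in $\tau$; I show separately that $\limsup\rho(x_n,y)\leq\rho(x,y)$ and $\liminf\rho(x_n,y)\geq\rho(x,y)$. Let $\varepsilon>0$. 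For the upper bound, (H2)(a) together with symmetry makes $B(y,\rho(x,y)+\varepsilon)$ a $\tau$-open set containing $x$, which therefore contains $x_n$ for all large $n$. For the lower bound one may assume $\rho(x,y)>0$ and $\varepsilon<\rho(x,y)$; then (H2)(b) gives
\[
\overline{B(y,\rho(x,y)-\varepsilon/2)}\subset\{z\in\Omega:\rho(z,y)\leq\rho(x,y)-\varepsilon/2\},
\]
a closed set missing $x$, so the complement of this closure is a $\tau$-open neighborhood of $x$ that eventually contains every $x_n$, forcing $\rho(x_n,y)>\rho(x,y)-\varepsilon/2>\rho(x,y)-\varepsilon$.

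The only genuine subtlety is the lower-bound step: (H2)(b) only embeds $\overline{B(y,r)}$ into the closed $\rho$-ball, without equality in general, which is exactly why one must apply (H2)(b) at the slightly shrunk radius $\rho(x,y)-\varepsilon/2$ in place of $\rho(x,y)-\varepsilon$. Apart from this small contortion the argument is routine, and no triangle inequality (and hence neither (H5) nor the structure of the exhaustion $\{\Omega_n\}$) enters the proof.
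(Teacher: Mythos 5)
Your proof is correct and follows essentially the same route as the paper: for (H2) $\Rightarrow$ (H2') both you and the authors exploit first countability of $\tau$ (available since (H2)(a) makes the balls $B(x,1/k)$ an open neighborhood base) to reduce to sequences, prove the $\limsup$ bound via (H2)(a) and the $\liminf$ bound via (H2)(b), and for (H2') $\Rightarrow$ (H2) both arguments amount to viewing $B(x_0,r_0)$ and its closed counterpart through the continuity of $z\mapsto\rho(z,x_0)$. The one stylistic difference is in (H2') $\Rightarrow$ (H2)(b): you argue directly that $\{y:\rho(x_0,y)\leq r_0\}$ is the preimage of a closed set hence $\tau$-closed and so swallows $\overline{B(x_0,r_0)}$, whereas the paper takes a sequence $y_k\to y$ in $B(x_0,r_0)$ and passes to the limit; your version is marginally cleaner and avoids sequences in that direction. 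Two small imprecisions worth noting: in the $\liminf$ step you refer to $\{z:\rho(z,y)\leq\rho(x,y)-\varepsilon/2\}$ as ``a closed set,'' but at that stage of the argument you have not yet established continuity, so you do not know this superlevel set is closed; what you actually use (and all you need) is that $\overline{B(y,\rho(x,y)-\varepsilon/2)}$ is closed by definition of closure and misses $x$ because it sits inside that superlevel set. Also, the $\varepsilon/2$ shrinkage you flag as the ``genuine subtlety'' is not actually necessary in your formulation: applying (H2)(b) at radius $\rho(x,y)-\varepsilon$ already places $\overline{B(y,\rho(x,y)-\varepsilon)}$ inside $\{z:\rho(z,y)\leq\rho(x,y)-\varepsilon\}$, which misses $x$, and the rest of the argument goes through unchanged (the paper's own use of $\varepsilon/2$ is similarly dispensable).
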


\begin{proof}
Assume (H2), and let us prove the continuity of $x\longmapsto\rho\left(
x,y\right)  $. Fix $x\in\Omega$ and take a sequence $\left\{  x_{k}\right\}  $
converging to $x$. Let us show that $\rho\left(  x_{k},y\right)
\rightarrow\rho\left(  x,y\right)  $ for any $y\in\Omega$. Let $r=\rho\left(
x,y\right)  $ and $\varepsilon>0$; since $x\in B\left(  y,r+\varepsilon
\right)  $ and the balls are open, there exists $B\left(  x,\delta\right)
\subset B\left(  y,r+\varepsilon\right)  .$ Then $x_{k}\in B\left(
y,r+\varepsilon\right)  $ definitively, since $x_{k}\rightarrow x.$ This
implies that $\rho\left(  x_{k},y\right)  <r+\varepsilon$ definitively, so
that%
\[
\text{limsup}_{k\rightarrow\infty}\rho\left(  x_{k},y\right)  \leq
r+\varepsilon.
\]
This holds for any $\varepsilon>0,$ hence%
\[
\text{limsup}_{k\rightarrow\infty}\rho\left(  x_{k},y\right)  \leq r.
\]
We now want to show that%
\begin{equation}
\text{liminf}_{k\rightarrow\infty}\rho\left(  x_{k},y\right)  \geq r,
\label{liminf}%
\end{equation}
which will imply $\rho\left(  x_{k},y\right)  \rightarrow\rho\left(
x,y\right)  $. Let again $r=\rho\left(  x,y\right)  $ and $\varepsilon>0$;
then $x\notin B\left(  y,r-\frac{\varepsilon}{2}\right)  .$ By (H2) (b),
\[
\overline{B\left(  y,r-\varepsilon\right)  }\subset B\left(  y,r-\frac
{\varepsilon}{2}\right)  ,
\]
hence (denoting the complement of $A$ with $A^{c}$) $x$ belongs to
$\overline{B\left(  y,r-\varepsilon\right)  }^{c}$; since this is an open set,
there exists $B\left(  x,\eta\right)  \subset\overline{B\left(
y,r-\varepsilon\right)  }^{c};$ hence $x_{k}\in\overline{B\left(
y,r-\varepsilon\right)  }^{c}$ definitively, which means that $\rho\left(
x_{k},y\right)  \geq r-\varepsilon$ definitively, and
\[
\text{liminf}_{k\rightarrow\infty}\rho\left(  x_{k},y\right)  \geq
r-\varepsilon.
\]
This holds for any $\varepsilon>0,$ so (\ref{liminf}) follows.

Conversely, assume now the continuity of $\rho,$ and let us prove (H2). Let
$y\in B\left(  x,r\right)  ,$ so that $\rho\left(  x,y\right)  <r.$ Since
$\rho$ is continuous, there exists $B\left(  y,r^{\prime}\right)  $ such that
for any $z\in B\left(  y,r^{\prime}\right)  $ we have $\rho\left(  x,z\right)
<r;$ hence $B\left(  y,r^{\prime}\right)  \subset B\left(  x,r\right)  $ and
$B\left(  x,r\right)  $ is open, that is (H2) (a)\ holds.

Let now $y\in\overline{B\left(  x,r\right)  }$; since we already know that
balls are open, as noted above this means that $y_{k}\rightarrow y$ for some
sequence $\left\{  y_{k}\right\}  \subset B\left(  x,r\right)  .$ Hence
$\rho\left(  y_{k},x\right)  <r$ and limsup$_{k\rightarrow\infty}\rho\left(
y_{k},x\right)  \leq r.$ However, $\rho$ is continuous, so
\[
\text{limsup}_{k\rightarrow\infty}\rho\left(  y_{k},x\right)  =\rho\left(
y,x\right)  ,
\]
which means that $y\in\left\{  z:\rho\left(  z,x\right)  \leq r\right\}  ,$
which is (H2) (b).
\end{proof}

The next property, which involves both $\rho$ and the measure $\mu,$ tells us
that also even when estimating H\"{o}lder norms, zero measure sets are negligible.

\begin{proposition}
\label{Holder negligible}(i) Let $A,E\subset\Omega,$ $A$ open and $E$ of
measure zero. Then $\overline{A\setminus E}=\overline{A}.$

(ii) Let $f:A\setminus E\rightarrow\mathbb{R}$ with $A,E$ as above, and $f$
such that, for some $\alpha,C>0$%
\begin{equation}
\left\vert f\left(  x\right)  -f\left(  y\right)  \right\vert \leq
C\rho\left(  x,y\right)  ^{\alpha} \label{holder}%
\end{equation}
for any $x,y\in A\setminus E.$ Then $f$ can be continuously extended to
$\overline{A}$ in such a way that (\ref{holder}) holds for any $x,y\in
\overline{A}.$
\end{proposition}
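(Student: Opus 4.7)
The plan for part (i) is to exploit the fact that balls have strictly positive measure. The inclusion $\overline{A\setminus E}\subset\overline{A}$ is trivial. For the reverse, let $x\in\overline{A}$ and let $U$ be any open neighborhood of $x$. Since $\Omega$ is first countable, it suffices to show $U\cap(A\setminus E)\neq\emptyset$. The set $A\cap U$ is open and contains points arbitrarily close to $x$, hence contains a ball $B(a,r)$ for some $a\in\Omega$ and $r>0$; by shrinking $r$ if necessary (using that $a\in\Omega_m$ for some $m$, take $r\leq\varepsilon_m$), condition (H6) yields $\mu(B(a,r))>0$. Since $E$ is a null set, $B(a,r)\setminus E$ is nonempty, and any point of it lies in $U\cap(A\setminus E)$. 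This gives $x\in\overline{A\setminus E}$.

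For part (ii), I would construct the extension by Cauchy sequences. Fix $x\in\overline{A}$; by (i) there is a sequence $\{x_k\}\subset A\setminus E$ with $x_k\to x$ in the topology of $\Omega$. Since balls are open, $x_k\to x$ is equivalent to $\rho(x_k,x)\to 0$; choosing $n$ with $x\in\Omega_n$, condition (H4)(ii) ensures that $x_k\in\Omega_{n+1}$ eventually, so the quasi-triangle inequality (H5) with constant $B_{n+1}$ gives
\[
\rho(x_k,x_j)\leq B_{n+1}\bigl(\rho(x_k,x)+\rho(x,x_j)\bigr)\longrightarrow 0.
\]
Combined with the Hölder bound (\ref{holder}), $\{f(x_k)\}$ is Cauchy in $\mathbb{R}$, so it converges; define $f(x)$ to be this limit. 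Independence of the chosen sequence is verified in the standard way by interleaving two candidate sequences and applying the same estimate, and if $x\in A\setminus E$ the constant sequence shows the extension agrees with $f$.

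To prove the Hölder estimate on all of $\overline{A}$, pick $x,y\in\overline{A}$ and approximating sequences $\{x_k\},\{y_j\}\subset A\setminus E$. For every fixed $k,j$ we have $|f(x_k)-f(y_j)|\leq C\rho(x_k,y_j)^\alpha$. The main obstacle is the passage to the limit in $\rho(x_k,y_j)^\alpha$, since we only have separate continuity of $\rho$ in each variable (Proposition \ref{Prop topology}), not joint continuity. I would bypass this by iterated limits: first let $j\to\infty$, using continuity of $\rho(x_k,\cdot)$ to get $\rho(x_k,y_j)\to\rho(x_k,y)$ and $f(y_j)\to f(y)$ (by the construction of the extension), which gives $|f(x_k)-f(y)|\leq C\rho(x_k,y)^\alpha$; then let $k\to\infty$, using continuity of $\rho(\cdot,y)$ and $f(x_k)\to f(x)$, to conclude $|f(x)-f(y)|\leq C\rho(x,y)^\alpha$. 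Continuity of the extension follows immediately from the Hölder bound.
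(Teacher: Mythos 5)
Your proof is correct and uses the same core idea as the paper: in part (i), both arguments exploit the fact that a ball must have positive measure by (H6), so a null set cannot contain a ball, and hence $A\setminus E$ is topologically dense in $A$. The paper phrases this by constructing an explicit sequence $y_k\in B(x_k,r_k)\setminus E$ converging to $x$ via the quasitriangle inequality, whereas you argue in neighborhood language; these are equivalent. In part (ii), the paper simply asserts that ``since $\rho$ is continuous, (\ref{holder}) implies that $f$ is uniformly continuous on $A\setminus E$, hence it can be continuously extended to $\overline{A\setminus E}$ in such a way that (\ref{holder}) still holds,'' and then invokes (i). You reconstruct exactly this extension via Cauchy sequences, and you rightly flag the one nontrivial point the paper glosses over: Proposition \ref{Prop topology} only gives \emph{separate} continuity of $\rho$ in each variable, not joint continuity, so passing the Hölder bound $|f(x_k)-f(y_j)|\leq C\rho(x_k,y_j)^\alpha$ to the limit requires care. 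Your iterated-limits argument (first $j\to\infty$ with $x_k$ fixed, then $k\to\infty$) handles this correctly, and is a worthwhile explicit justification of a step the paper leaves implicit. One small cosmetic point: the appeal to first countability in part (i) is unnecessary, since the characterization ``$x\in\overline{S}$ iff every open neighborhood of $x$ meets $S$'' holds in any topological space; first countability is only needed for the converse sequential characterization, which you do use (implicitly) in part (ii).
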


\begin{proof}
(i) Let $x\in\overline{A}$ and $\left\{  x_{k}\right\}  \subset A$ such that
$x_{k}\rightarrow x.$ Since $A$ is open, for any $k$ there exists $r_{k}>0$
such that $B\left(  x_{k},r_{k}\right)  \subset A.$ It is not restrictive to
assume that $r_{k}\rightarrow0.$ For any $k$ there exists $y_{k}\in B\left(
x_{k},r_{k}\right)  \setminus E$; otherwise $E$ would contain a ball, which by
(H6) has positive measure. By (\ref{Hp 0}), $x\in\Omega_{n}$ for some $n$;
then, by (\ref{Hp 1}) the sequence $\left\{  x_{k}\right\}  $ is definitively
contained in $\Omega_{n+1};$ for the same reason the balls $B\left(
x_{k},r_{k}\right)  $ are definitively contained in $\Omega_{n+2},$ hence we
can apply the quasitriangle inequality (\ref{Hp 2}) writing%
\[
\rho\left(  y_{k},x\right)  \leq B_{n+2}\left(  \rho\left(  y_{k}%
,x_{k}\right)  +\rho\left(  x_{k},x\right)  \right)  \leq B_{n+2}\left(
r_{k}+\rho\left(  x_{k},x\right)  \right)  \rightarrow0
\]
for $k\rightarrow\infty,$ so $y_{k}\rightarrow x.$ Since $y_{k}\in A\setminus
E$, this implies $x\in\overline{A\setminus E}$ and we are done.

(ii) Since $\rho$ is continuous, (\ref{holder}) implies that $f$ is uniformly
continuous on $A\setminus E,$ hence it can be continuously extended to
$\overline{A\setminus E}$ in such a way that (\ref{holder}) still holds. By
point (i) $\overline{A\setminus E}=\overline{A}$, so (ii) is proved.
\end{proof}

\section{Dyadic cubes in a locally homogeneous space\label{Section Dyadic}}

Throughout this paper, until Section 8, we will assume that $\left(
\Omega,\left\{  \Omega_{n}\right\}  _{n=1}^{\infty},\rho,\mu\right)  $ be a
locally homogeneous space.

The construction of dyadic cubes, which has been anticipated in the
introduction, is contained in the following:

\begin{theorem}
\label{Main Thm}For any $n=1,2,3,...\ $there exists a collection of open sets
\[
\left\{  Q_{\alpha}^{k}\subset\Omega,k=1,2,3...,\alpha\in I_{k}\right\}
\]
(where $I_{k}$ is a set of indices), positive constants $a_{0},c_{0}%
,c_{1},c_{2},\delta\in\left(  0,1\right)  $ and a set $E\subset\Omega_{n}$ of
zero measure, such that for any $k=1,2,3...$ we have:

(a) $\forall\alpha\in I_{k},$ each $Q_{\alpha}^{k}$ contains a ball $B\left(
z_{\alpha}^{k},a_{0}\delta^{k}\right)  ;$

(b) $%
{\displaystyle\bigcup_{\alpha\in I_{k}}}
Q_{\alpha}^{k}\subset\Omega_{n+1};$

(c) $\forall\alpha\in I_{k},1\leq l\leq k$ there exists $Q_{\beta}%
^{l}\supseteq Q_{\alpha}^{k};$

(d) $\forall\alpha\in I_{k},$ diam$\left(  Q_{\alpha}^{k}\right)  <c_{1}%
\delta^{k}$ and $\overline{Q_{\alpha}^{k}}\subset B\left(  z_{\alpha}%
^{k},c_{1}\delta^{k}\right)  ;$

(e) $\ell\geq k\Longrightarrow\forall\alpha\in I_{k},\beta\in I_{l},Q_{\beta
}^{\ell}\subset Q_{\alpha}^{k}$ or $Q_{\beta}^{\ell}\cap Q_{\alpha}%
^{k}=\emptyset;$

(f) $\Omega_{n}\setminus%
{\displaystyle\bigcup_{\alpha\in I_{k}}}
Q_{\alpha}^{k}\subset E;$

(g) $\forall\alpha\in I_{k},$ $x\in Q_{\alpha}^{k}\setminus E,$ $j\geq1$ there
exists $Q_{\beta}^{j}\ni x;$

(h) $\mu\left(  B\left(  x,2r\right)  \cap Q_{\alpha}^{k}\right)  \leq
c_{2}\mu\left(  B\left(  x,r\right)  \cap Q_{\alpha}^{k}\right)  $ for any
$x\in Q_{\alpha}^{k}\setminus E,r>0.$ More precisely, for these $x$ and $r$ we
have:%
\begin{equation}
\mu\left(  B\left(  x,r\right)  \cap Q_{\alpha}^{k}\right)  \geq\left\{
\begin{tabular}
[c]{ll}%
$c_{0}\mu\left(  B\left(  x,r\right)  \right)  $ & $\text{for }r\leq\delta
^{k}$\\
$c_{0}\mu\left(  Q_{\alpha}^{k}\right)  $ & $\text{for }r>\delta^{k}$%
\end{tabular}
\ \ \ \ \ \ \ \right.  \label{lower bounds cubes}%
\end{equation}

\end{theorem}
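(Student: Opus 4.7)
The plan is to adapt M. Christ's dyadic cube construction for spaces of homogeneous type \cite{C} to our local setting, where the centres of cubes are chosen inside $\Omega_{n}$ but the cubes themselves are permitted to spread slightly into $\Omega_{n+1}$. A parameter $\delta\in(0,1)$ will be fixed small enough (depending on the constants of $\Omega_{n},\ldots,\Omega_{n+3}$, in particular on $B_{n+2}$, $C_{n+1}$ and $\varepsilon_{n}$) so that, on the one hand, every ball of radius $\lesssim\delta^{k}$ centred in $\Omega_{n}$ is contained in $\Omega_{n+1}$ via (H4)(ii), and on the other hand the geometric series $\sum_{\ell\geq k}\delta^{\ell}$ that control diameters of descendant families converge with favourable constants when iterated through the quasitriangle inequality (H5).

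For each scale $k\geq 1$ I would pick a maximal $\delta^{k}$-separated subset $\{z_{\alpha}^{k}:\alpha\in I_{k}\}$ of $\Omega_{n}$, so that the balls $B(z_{\alpha}^{k},\delta^{k}/2)$ are pairwise disjoint and $\Omega_{n}\subset\bigcup_{\alpha}B(z_{\alpha}^{k},\delta^{k})$; the local doubling (H6) together with compactness of $\overline{\Omega}_{n}$ makes $I_{k}$ at most countable. Next, assign to each $z_{\beta}^{k+1}$ a unique parent $z_{\alpha}^{k}$ by choosing a closest centre at the previous scale, with a fixed tie-breaking rule. This produces an ancestor partial order $\prec$ on $\bigcup_{k}\{z_{\alpha}^{k}\}$ with tree structure, and the cube $Q_{\alpha}^{k}$ is then defined, following Christ, as the interior of the closure of $\bigcup\{B(z_{\beta}^{\ell},a_{0}\delta^{\ell}):\ell\geq k,\ z_{\beta}^{\ell}\prec z_{\alpha}^{k}\}$ for a suitably small $a_{0}$.

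Properties (a)--(g) would be verified by essentially the arguments of \cite{C} transplanted to the local framework: (a) is built into the definition; (c) and (e) are the tree/nesting structure; (d) follows from summing a geometric series of radii $\delta^{\ell}$ through (H5) in $\Omega_{n+2}$; (b) then follows once $\delta$ is small relative to $\varepsilon_{n}$, via (H4)(ii); (f) is obtained by taking $E$ to be the union over all $k$ of the topological boundaries $\partial Q_{\alpha}^{k}$, each of which has $\mu$-measure zero by Christ's ``thin shell'' estimate, which in our setting uses (H6) inside $\Omega_{n+1}$; (g) is immediate from (f) together with the tree structure. Continuity of $\rho$ and Proposition \ref{Prop topology} are used freely to make the topological manipulations legitimate.

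The main obstacle is property (h), the internal homogeneity of each $Q_{\alpha}^{k}$ with constants $c_{0},c_{2}$ uniform in $\alpha$ and $k$. For $r>\delta^{k}$ the lower bound (\ref{lower bounds cubes}) follows from (a) together with (H6) applied in $\Omega_{n+1}$, since $B(z_{\alpha}^{k},a_{0}\delta^{k})\subset Q_{\alpha}^{k}\subset B(z_{\alpha}^{k},c_{1}\delta^{k})\subset B(x,Cr)$ and local doubling comparable on these nested balls. The delicate case is $r\leq\delta^{k}$: for $x\in Q_{\alpha}^{k}\setminus E$, property (g) produces a descendant cube $Q_{\beta}^{\ell}\ni x$ at the scale $\ell$ chosen so that $\delta^{\ell}$ is comparable to $r$; the inclusion $B(z_{\beta}^{\ell},a_{0}\delta^{\ell})\subset Q_{\beta}^{\ell}\subset Q_{\alpha}^{k}$ together with (H5) and iterated (H6) then captures a fixed fraction of $B(x,r)$ inside $Q_{\alpha}^{k}$. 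Once (\ref{lower bounds cubes}) is in hand, the doubling inequality in (h) is deduced by pairing it with the upper estimate $\mu(B(x,2r)\cap Q_{\alpha}^{k})\leq\mu(B(x,2r))\leq C_{n+1}'\mu(B(x,r))$ coming from (H6). Throughout, the constants must be tracked to depend on $n$ only in the sense allowed by the paper, i.e.\ via the constants of $\Omega_{n},\Omega_{n+1},\Omega_{n+2},\Omega_{n+3}$.
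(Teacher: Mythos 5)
Your proposal departs from the paper in one place that turns out to matter, and the departure creates a genuine gap. You propose to choose, at every scale $k$, a maximal $\delta^{k}$-separated set of centres \emph{inside $\Omega_{n}$}. The paper instead chooses the scale-$k$ centres as a maximal $\delta^{k}$-separated subset of a \emph{growing} sequence of sets $E_{1}\subseteq E_{2}\subseteq\cdots$, with $E_{1}=\Omega_{n}$ and $E_{k+1}=\bigcup_{\alpha}B(z_{\alpha}^{k},\delta^{k})$, and shows $\bigcup_{k}E_{k}\subset\Omega_{n+1}$ for $\delta$ small. This growing-universe device is not cosmetic: since the cubes $Q_{\alpha}^{k}$ are unions of balls of radius $a_{0}\delta^{\ell}$ around descendant centres, they spill out of $\Omega_{n}$, and property (g) requires that \emph{every} almost-every point of $Q_{\alpha}^{k}$ be covered by a cube at \emph{every finer} scale $j$. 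In your construction, a point $x\in Q_{\alpha}^{k}$ lying at positive distance $d$ from $\Omega_{n}$ (such points fill a positive-measure portion of cubes near $\partial\Omega_{n}$) cannot lie in any ball $B(z_{\gamma}^{j},a_{0}\delta^{j})$ once $a_{0}\delta^{j}<d$, because all the $z_{\gamma}^{j}$ are in $\Omega_{n}$; it also cannot be a limit point of such balls for large $j$, since $d(\cdot,\overline{\Omega}_{n})$ is continuous. So (g) actually fails for a set of positive measure, and cannot be repaired by enlarging $E$.

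Your remark that ``(g) is immediate from (f) together with the tree structure'' is the point where this slips by: (f) covers $\Omega_{n}\setminus E$ only, while (g) is a statement about points in $Q_{\alpha}^{k}$, many of which lie outside $\Omega_{n}$; and the tree ordering only produces \emph{ancestor} cubes at coarser scales, not descendant cubes at finer ones. The paper instead proves (g) by establishing $\mu(E_{h}\setminus F_{k})=0$ for all $h,k$ via a Lebesgue-density argument, which works precisely because the scale-$j$ centres are dense (to accuracy $\delta^{j}$) in the entire set $E_{j}\supseteq E_{h}$ where the coarser-scale balls live. Since your own proof of (h) relies on (g) to locate a descendant cube $Q_{\beta}^{\ell}\ni x$ at a comparable scale (in both the $r\leq\delta^{k}$ and $r>\delta^{k}$ cases), (h) does not follow either. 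The fix is to replace your fixed universe $\Omega_{n}$ by the paper's nested $E_{k}$'s when choosing the separated sets, at which point the rest of your outline goes through along Christ's lines.
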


Note that the cubes $Q_{\alpha}^{k}$ and all the constants depend on $n,$ so
we should write, more precisely%
\[
\left\{  Q_{\alpha}^{\left(  n\right)  ,k}\right\}  _{\alpha\in I_{k}^{\left(
n\right)  }};\delta_{\left(  n\right)  };a_{0,\left(  n\right)  },c_{0,\left(
n\right)  },c_{1,\left(  n\right)  },c_{2,\left(  n\right)  }.
\]
However, in order to simplify notation, we will skip the index $\left(
n\right)  $ whenever doing so does not create ambiguity. As will be apparent
from the proof, the sequence of constants $\delta_{\left(  n\right)  }$ can be
assumed nonincreasing.

The sets $Q_{\alpha}^{k}$ can be thought as dyadic cubes of sidelength
$\delta^{k}$. Note that $k$ is a \emph{positive }integer, so we are only
considering \emph{small }dyadic cubes.

The proof of Theorem \ref{Main Thm} is not much more than a careful inspection
and adaptation of some proofs contained in \cite{C}. However, our iterative
construction is a bit more involved because at every step $n$ the
\textquotedblleft universe\textquotedblright\ that we want to cover with our
cubes enlarges. Moreover, in order to use the quasitriangle inequality, we
need to know in advance that the points belong to some domain; this will be
often proved by a tricky combined use of (\ref{Hp 1}) and (\ref{Hp 2}).

\begin{proof}
[Proof of Theorem \ref{Main Thm}, first part]For a fixed $\Omega_{n},$ let
$\delta>0$ to be fixed later, and let us perform the following iterative construction.

For $k=1$, let us fix a maximal collection of points $\left\{  z_{\alpha}%
^{1}\right\}  _{\alpha\in I_{1}}\subset\Omega_{n}$ such that%
\[
\rho\left(  z_{\alpha}^{k},z_{\beta}^{k}\right)  \geq\delta\text{ for any
}\alpha\neq\beta.
\]
By the maximality, we can say that for $x\in\Omega_{n}$ there exists
$z_{\alpha}^{1}$ such that $\rho\left(  z_{\alpha}^{1},x\right)  <\delta,$
hence%
\[
E_{1}\equiv\Omega_{n}\subseteq%
{\displaystyle\bigcup\limits_{\alpha\in I_{1}}}
B\left(  z_{\alpha}^{1},\delta\right)  \equiv E_{2}.
\]
For $k=2,$ let us fix a maximal collection of points $\left\{  z_{\alpha}%
^{2}\right\}  _{\alpha\in I_{2}}\subset E_{2}$ such that%
\[
\rho\left(  z_{\alpha}^{2},z_{\beta}^{2}\right)  \geq\delta^{2}\text{ for any
}\alpha\neq\beta.
\]
By the maximality, we can say that for any $x\in E_{2}$ there exists
$z_{\alpha}^{2}$ such that $\rho\left(  z_{\alpha}^{1},x\right)  <\delta^{2},$
hence%
\[
E_{2}\subseteq%
{\displaystyle\bigcup\limits_{\alpha\in I_{2}}}
B\left(  z_{\alpha}^{2},\delta^{2}\right)  \equiv E_{3}.
\]
Continuing this way, we build a family of points $\left\{  z_{\alpha}%
^{k}\right\}  _{\alpha\in I_{k}}$ for $k=1,2,3,...,$ and a family of sets
$E_{1}\subseteq E_{2}\subseteq E_{3}\subseteq...$ . We are going to show that
it is possible to choose $\delta$ small enough so that%
\begin{equation}%
{\displaystyle\bigcup_{k=1}^{\infty}}
E_{k}\subset\Omega_{n+1}. \label{Union E_k}%
\end{equation}

Namely: $E_{1}=\Omega_{n}\subset\Omega_{n+1}$ and, by definition of $E_{2}$
and (\ref{Hp 1}) $E_{2}\subset\Omega_{n+1}$ as soon as
\begin{equation}
\delta<2\varepsilon_{n}. \label{delta 1}%
\end{equation}

Let now $y\in E_{3}.$ Then there exists $z_{\alpha}^{2}\in E_{2}$ such that
$\rho\left(  y,z_{\alpha}^{2}\right)  <\delta^{2}$ and there exists $z_{\beta
}^{1}\in\Omega_{n}$ such that $\rho\left(  z_{\alpha}^{2},z_{\beta}%
^{1}\right)  <\delta.$ Since $E_{2}\subset\Omega_{n+1},$ we have $y\in
\Omega_{n+2}$ (that is $E_{3}\subset\Omega_{n+2}$) as soon as
\begin{equation}
\delta^{2}<2\varepsilon_{n+1}. \label{delta 2}%
\end{equation}
Under this assumption we can write%
\[
\rho\left(  y,z_{\beta}^{1}\right)  \leq B_{n+2}\left(  \rho\left(
y,z_{\alpha}^{2}\right)  +\rho\left(  z_{\alpha}^{2},z_{\beta}^{1}\right)
\right)  \leq B_{n+2}\left(  \delta^{2}+\delta\right)  .
\]
Then, under the further assumption%
\begin{equation}
B_{n+2}\left(  \delta^{2}+\delta\right)  \leq2\varepsilon_{n} \label{delta 3}%
\end{equation}
we can conclude that $E_{3}\subset\Omega_{n+1}$ (which strengthen the previous
conclusion $E_{3}\subset\Omega_{n+2}$).

This idea can be iterated showing that for any $y\in E_{N},$ $N=2,3,4...$
there exists $x\in\Omega_{n}$ such that%
\begin{align}
\rho\left(  x,y\right)   &  \leq B_{n+2}\left[  \delta+B_{n+2}\left[
\delta^{2}+B_{n+2}\left[  \delta^{3}+...+B_{n+2}\left[  \delta^{N-1}%
+\delta^{N}\right]  \right]  \right]  \right] \label{rho(x,y)}\\
&  \leq\sum_{k=1}^{N}\left(  B_{n+2}\delta\right)  ^{k}\leq\frac{\delta
B_{n+2}}{1-\delta B_{n+2}}<2\delta B_{n+2}\nonumber
\end{align}
provided
\begin{equation}
\delta<1/\left(  2B_{n+2}\right)  . \label{delta 5}%
\end{equation}
If, moreover,
\begin{equation}
\delta<\varepsilon_{n}/B_{n+2}, \label{delta 4}%
\end{equation}
we can conclude (\ref{Union E_k}). Choosing $\delta$ small enough to fulfill
conditions (\ref{delta 1})-(\ref{delta 4}) we are done.

Note that, for any $k=2,3...,\alpha\in I_{k},$%
\begin{equation}
z_{\alpha}^{k}\in E_{k}=%
{\displaystyle\bigcup\limits_{\beta\in I_{k-1}}}
B\left(  z_{\beta}^{k-1},\delta^{k-1}\right)  \supseteq E_{k-1} \label{E_k}%
\end{equation}
hence for any $z_{\alpha}^{k}$ there exists $\beta\in I_{k-1}$ such that
\begin{equation}
\rho\left(  z_{\alpha}^{k},z_{\beta}^{k-1}\right)  <\delta^{k-1} \label{3.8}%
\end{equation}
Moreover, for any $k=1,2,3...,$%
\begin{equation}
\rho\left(  z_{\alpha}^{k},z_{\beta}^{k}\right)  \geq\delta^{k}\text{ for any
}\alpha\neq\beta. \label{alfa not beta}%
\end{equation}

\end{proof}

After this preliminary construction we pause for a moment our proof, and give
the following

\begin{definition}
A \emph{tree} is a partial ordering $\leq$ of the set of all ordered pairs
$\left(  k,\alpha\right)  $ ($k=1,2,3,...,\alpha\in I_{k}$) which satisfies:

\begin{enumerate}
\item[(T1)] $\left(  k,\alpha\right)  \leq\left(  l,\beta\right)  \Rightarrow
k\geq l.$

\item[(T2)] For each $\left(  k,\alpha\right)  $ and $1\leq l\leq k$ there
exists a unique $\beta$ such that $\left(  k,\alpha\right)  \leq\left(
l,\beta\right)  .$

\item[(T3)] $\left(  k,\alpha\right)  \leq\left(  k-1,\beta\right)
\Longrightarrow\rho\left(  z_{\alpha}^{k},z_{\beta}^{k-1}\right)
<\delta^{k-1}.$

\item[(T4)] $\rho\left(  z_{\alpha}^{k},z_{\beta}^{k-1}\right)  <\left(
2B_{n}\right)  ^{-1}\delta^{k-1}\Longrightarrow\left(  k,\alpha\right)
\leq\left(  k-1,\beta\right)  .$
\end{enumerate}
\end{definition}

It is not restrictive to assume
\begin{equation}
B_{n}\geq2, \label{B_n}%
\end{equation}
as we will do in the following; hence the constant $\left(  2B_{n}\right)
^{-1}$ appearing in the definition is $\leq1$.

This definition is the same given in \cite{C}, except for the restriction that
our integers $k,l$ are positive. Moreover, our tree also depends on $n$
(through the points $z_{\alpha}^{k}$). As proved in \cite[Lemma 13]{C},
\emph{there exists at least one tree }(for each integer $n$). Actually, the
same proof applies in view of (\ref{3.8}), (\ref{alfa not beta}).

As in \cite{C}, we can now define the dyadic cubes:

\begin{definition}
For a fixed integer $n$, fix a tree, and let $a_{0}\in\left(  0,1\right)  $ be
a small constant to be determined. For $k=1,2,3,...,\alpha\in I_{k},$ set%
\begin{equation}
Q_{\alpha}^{k}=%
{\displaystyle\bigcup\limits_{\left(  l,\beta\right)  \leq\left(
k,\alpha\right)  }}
B\left(  z_{\beta}^{l},a_{0}\delta^{l}\right)  . \label{Def dyadic}%
\end{equation}

\end{definition}

\begin{proof}
[Proof of Theorem \ref{Main Thm}, second part]By definition, each $Q_{\alpha
}^{k}$ is an open set and (a) holds. Since
\[
B\left(  z_{\beta}^{l},a_{0}\delta^{l}\right)  \subseteq B\left(  z_{\beta
}^{l},a_{0}\delta\right)
\]
we have%
\[
Q_{\alpha}^{k}\subseteq%
{\displaystyle\bigcup\limits_{\left(  l,\beta\right)  \leq\left(
k,\alpha\right)  }}
B\left(  z_{\beta}^{l},a_{0}\delta\right)  .
\]
Since $z_{\beta}^{l}\in%
{\displaystyle\bigcup_{k=1}^{\infty}}
E_{k}\subset\Omega_{n+1},$ choosing $a_{0}$ such that%
\begin{equation}
a_{0}\delta<\varepsilon_{n+1} \label{azero 1}%
\end{equation}
we read that $Q_{\alpha}^{k}\subset\Omega_{n+2}.$ Let $y\in Q_{\alpha}^{k}$
and $z_{\beta}^{l}$ such that $\rho\left(  y,z_{\beta}^{l}\right)
<a_{0}\delta.$By (\ref{rho(x,y)}), there exists $x\in\Omega_{n}$ such that
$\rho\left(  x,z_{\beta}^{l}\right)  <2\delta B_{n+2},$ hence%
\[
\rho\left(  x,y\right)  \leq B_{n+2}\left(  \rho\left(  x,z_{\beta}%
^{l}\right)  +\rho\left(  y,z_{\beta}^{l}\right)  \right)  \leq B_{n+2}\left(
2\delta B_{n+2}+a_{0}\delta\right)
\]
so we can conclude that $y\in\Omega_{n+1}$ provided%
\begin{equation}
\delta\left(  2B_{n+2}^{2}+a_{0}B_{n+2}\right)  <2\varepsilon_{n}.
\label{delta 6}%
\end{equation}
It is now useful to choose $a_{0}=\delta;$ this implies that all the
conditions we will write on $a_{0}$ and $\delta$ simply ask that $\delta$ be
small enough in terms of the constants $\varepsilon_{n},B_{n}$, so that all
these conditions can be \emph{simultaneously }satisfied. Nevertheless, we will
keep using both the symbols $a_{0}$ and $\delta,$ to stress the different
roles of these constants.

Under assumptions (\ref{azero 1})-(\ref{delta 6}), we conclude $Q_{\alpha}%
^{k}\subset\Omega_{n+1},$ that is (b)\ holds.

From the definition (\ref{Def dyadic}) we also have the monotonicity of dyadic
cubes:%
\begin{equation}
\left(  l,\beta\right)  \leq\left(  k,\alpha\right)  \Longrightarrow Q_{\beta
}^{l}\subseteq Q_{\alpha}^{k}. \label{monotone}%
\end{equation}
By (T2) and (\ref{monotone}) we immediately have (c).

As in \cite[(3.13)]{C} we can prove that%
\begin{equation}
\left(  l,\beta\right)  \leq\left(  k,\alpha\right)  \Longrightarrow
\rho\left(  z_{\beta}^{l},z_{\alpha}^{k}\right)  \leq2B_{n+1}\delta^{k}
\label{3.13}%
\end{equation}
provided
\begin{equation}
\delta<\left(  2B_{n}\right)  ^{-1}. \label{delta 5'}%
\end{equation}
This implies (d) since, for any $x,y\in Q_{\alpha}^{k},$ $x\in B\left(
z_{\beta}^{l},a_{0}\delta^{l}\right)  ,y\in B\left(  z_{\gamma}^{h}%
,a_{0}\delta^{h}\right)  $ for some $\left(  l,\beta\right)  \leq\left(
k,\alpha\right)  ,\left(  h,\gamma\right)  \leq\left(  k,\alpha\right)  $ we
can write%
\begin{align*}
\rho\left(  x,y\right)   &  \leq B_{n+1}\left[  \rho\left(  x,z_{\beta}%
^{l}\right)  +\rho\left(  y,z_{\beta}^{l}\right)  \right] \\
&  \leq B_{n+1}\left[  a_{0}\delta^{l}+B_{n+1}\left[  \rho\left(  y,z_{\gamma
}^{h}\right)  +\rho\left(  z_{\beta}^{l},z_{\gamma}^{h}\right)  \right]
\right] \\
&  \leq B_{n+1}\left[  a_{0}\delta^{l}+B_{n+1}\left[  a_{0}\delta^{h}%
+B_{n+1}\left[  \rho\left(  z_{\beta}^{l},z_{\alpha}^{k}\right)  +\rho\left(
z_{\alpha}^{k},z_{\gamma}^{h}\right)  \right]  \right]  \right] \\
&  \leq B_{n+1}\left[  a_{0}\delta^{l}+B_{n+1}\left[  a_{0}\delta^{h}%
+B_{n+1}\left[  2B_{n+1}\delta^{k}+2B_{n+1}\delta^{k}\right]  \right]
\right]
\end{align*}
where the last inequality follows by (\ref{3.13}). Since $l,h\geq k$ this
implies (since $a_{0}<1$)%
\[
\rho\left(  x,y\right)  \leq\delta^{k}\left[  B_{n+1}+B_{n+1}^{2}+4B_{n+1}%
^{4}\right]
\]
which gives (d) with
\begin{equation}
c_{1}=7B_{n+1}^{4}. \label{c_1}%
\end{equation}
Note that with this choice of $c_{1}$ we also have $\overline{Q_{\alpha}^{k}%
}\subset B\left(  z_{\alpha}^{k},c_{1}\delta^{k}\right)  .$

In order to prove (e), we can now proceed proving, as \cite[Lemma 15]{C}:%
\begin{equation}
\text{If }Q_{\alpha}^{k}\cap Q_{\beta}^{k}\neq\emptyset\text{ then }%
\alpha=\beta. \label{disjoint}%
\end{equation}
Indeed, the same proof of \cite[Lemma 15]{C} applies, in view of (\ref{3.13}),
(b), (\ref{alfa not beta}).\ More precisely, (\ref{disjoint}) holds provided
we choose $a_{0}$ and $\delta$ small enough so that
\begin{equation}
\delta+a_{0}<\left(  2B_{n+1}\right)  ^{-3}. \label{azero 2}%
\end{equation}
With (\ref{disjoint}) in hand let us show that (e) holds. Let $l\geq k\geq1,$
$Q_{\beta}^{\ell}\cap Q_{\alpha}^{k}\neq\emptyset,$ and choose $\gamma$ such
that $\left(  l,\beta\right)  \leq\left(  k,\gamma\right)  $ (this is possible
by (T2)); then $Q_{\beta}^{l}\subseteq Q_{\gamma}^{k}$ which, together with
$Q_{\beta}^{\ell}\cap Q_{\alpha}^{k}\neq\emptyset$ implies $Q_{\alpha}^{k}\cap
Q_{\gamma}^{k}\neq\emptyset.$ By (\ref{disjoint}) then $\alpha=\gamma,$ that
is $Q_{\beta}^{l}\subseteq Q_{\alpha}^{k}$ which gives (e).

Let us come to the proof of (f). Fix $k\geq1$ and let%
\[
F_{k}=%
{\displaystyle\bigcup_{\alpha\in I_{k}}}
Q_{\alpha}^{k}.
\]
Fix $x\in\Omega_{n}=E_{1}$; since $E_{1}\subseteq E_{2}\subseteq
E_{3}\subseteq...,$ by (\ref{E_k}),
\begin{equation}
\forall h\geq1\text{ }\exists z_{\alpha}^{h}\text{ such that }\rho\left(
x,z_{\alpha}^{h}\right)  <\delta^{h}. \label{closure}%
\end{equation}
By (b), for any $h\geq k$ we have%
\[
B\left(  z_{\alpha}^{h},a_{0}\delta^{h}\right)  \subseteq Q_{\alpha}^{h}.
\]
By (c) there exists $Q_{\beta}^{k}\supseteq Q_{\alpha}^{h},$ hence
\begin{equation}
B\left(  z_{\alpha}^{h},a_{0}\delta^{h}\right)  \subseteq Q_{\alpha}%
^{h}\subseteq Q_{\beta}^{k}\subseteq F_{k}\subset\Omega_{n+1}.
\label{Lebesgue1}%
\end{equation}
By the triangle inequality,%
\begin{equation}
B\left(  z_{\alpha}^{h},a_{0}\delta^{h}\right)  \subseteq B\left(
x,B_{n+1}\left(  1+a_{0}\right)  \delta^{h}\right)  \equiv B.
\label{Lebesgue2}%
\end{equation}
In turn,
\[
B\left(  x,B_{n+1}\left(  1+a_{0}\right)  \delta^{h}\right)  \subseteq
B\left(  z_{\alpha}^{h},B_{n+1}\left(  B_{n+1}\left(  1+a_{0}\right)
\delta^{h}+\delta^{h}\right)  \right)  .
\]
For $h$ large enough we have
\begin{equation}
B_{n+1}\left(  B_{n+1}\left(  1+a_{0}\right)  \delta^{h}+\delta^{h}\right)
\leq3B_{n+1}^{2}\delta^{h}\leq\varepsilon_{n}, \label{delta 7}%
\end{equation}
and the local doubling condition (\ref{Hp 3}) implies%
\[
\mu\left(  B\left(  z_{\alpha}^{h},a_{0}\delta^{h}\right)  \right)  \geq
c\mu\left(  B\right)
\]
for some constant $c>0$ depending on $n$ (once we have fixed $\delta$ and
$a_{0}$). By (\ref{Lebesgue1}) and (\ref{Lebesgue2}) the last inequality gives%
\[
\frac{\mu\left(  F_{k}\cap B\right)  }{\mu\left(  B\right)  }\geq\frac
{\mu\left(  B\left(  z_{\alpha}^{h},a_{0}\delta^{h}\right)  \cap B\right)
}{\mu\left(  B\right)  }=\frac{\mu\left(  B\left(  z_{\alpha}^{h},a_{0}%
\delta^{h}\right)  \right)  }{\mu\left(  B\right)  }\geq c>0
\]
and $h$ large enough. Letting $h\rightarrow+\infty$ we find that%
\[
\underset{r\rightarrow0}{\text{limsup}}\frac{\mu\left(  F_{k}\cap B\left(
x,r\right)  \right)  }{\mu\left(  B\left(  x,r\right)  \right)  }\geq
c>0\text{ \ }\forall x\in\Omega_{n},k=1,2,3...
\]
By Lebesgue's theorem on differentiation of the integral, $\mu\left(
\Omega_{n}\setminus F_{k}\right)  =0.$ Letting
\begin{equation}
E=%
{\displaystyle\bigcup_{k=1}^{\infty}}
\left(  \Omega_{n}\setminus F_{k}\right)  \label{E}%
\end{equation}
we have (f).

To prove (g) we need a refinement of the argument used in the above proof of
(f). Since $\Omega_{n}=E_{1}\subseteq E_{2}\subseteq E_{3}\subseteq...,$ by
(\ref{E_k}) for any $x\in E_{N}$ we have that:
\[
\forall h\geq N\text{ }\exists z_{\alpha}^{h}\text{ such that }\rho\left(
x,z_{\alpha}^{h}\right)  <\delta^{h}%
\]
while for any $h\geq k$ (\ref{Lebesgue1}) and (\ref{Lebesgue2}) still hold.
Hence we can prove as above that%
\begin{equation}
\mu\left(  E_{h}\setminus F_{k}\right)  =0\text{ for any }k,h\geq1.
\label{refine}%
\end{equation}

Let $F$ be the null set given by $%
{\displaystyle\bigcup_{h,k\geq1}}
\left(  E_{h}\setminus F_{k}\right)  .$ Then fix a dyadic cube $Q_{\alpha}%
^{k}$ and pick a point $x\in Q_{\alpha}^{k}\setminus F$. Since $x\in
Q_{\alpha}^{k},$ there exists $B\left(  z_{\beta}^{h},a_{0}\delta^{h}\right)
\ni x$ for some $h\geq k;$ since
\[
B\left(  z_{\beta}^{h},a_{0}\delta^{h}\right)  \subset B\left(  z_{\beta}%
^{h},\delta^{h}\right)  \subset E_{h},
\]
this means that $x\in E_{h};$ since $x\notin F$, (\ref{refine}) implies that
for any $l\geq1$ the point $x$ belongs to some $Q_{\beta}^{l},$ which is (g).
Clearly, the fact that the null set $F$ appearing in the proof of this point
is possibly different from the null set $E$ appearing in the proof of point
(f) is immaterial, since we can always relabel $E$ the union of the two.

To prove (h), let $x\in Q_{\alpha}^{k}\setminus F$ (with $F$ as above) and
$r>0.$ We need to establish a lower bound on $\mu\left(  B\left(  x,r\right)
\cap Q_{\alpha}^{k}\right)  ;$ let us distinguish two cases:

(i) $r\leq\delta^{k}.$ Let $j\geq k$ such that $\delta^{j+1}<r\leq\delta^{j}$
and let $Q_{\beta}^{j+2}$ a cube containing $x$ (by (g) it certainly exists).
By (e), $Q_{\beta}^{j+2}\subset Q_{\alpha}^{k}$ while by (d), diam$\left(
Q_{\beta}^{j+2}\right)  \leq c_{1}\delta^{j+2}.$ Then $Q_{\beta}^{j+2}\subset
B\left(  x,r\right)  ,$ since, for $y\in Q_{\beta}^{j+2},$%
\[
\rho\left(  x,y\right)  \leq B_{n+1}\left[  \rho\left(  x,z_{\beta}%
^{j+2}\right)  +\rho\left(  y,z_{\beta}^{j+2}\right)  \right]  \leq
2B_{n+1}c_{1}\delta^{j+2}\leq\delta^{j+1}<r
\]
provided $\delta$ is small enough so that
\begin{equation}
2B_{n+1}c_{1}\delta\leq1. \label{delta 9}%
\end{equation}
Therefore%
\begin{align*}
\mu\left(  B\left(  x,r\right)  \cap Q_{\alpha}^{k}\right)   &  \geq\mu\left(
Q_{\beta}^{j+2}\right)  \geq\mu\left(  B\left(  z_{\beta}^{j+2},a_{0}%
\delta^{j+2}\right)  \right) \\
&  \geq c_{0}\mu\left(  B\left(  x,\delta^{j}\right)  \right)  \geq c_{0}%
\mu\left(  B\left(  x,r\right)  \right)
\end{align*}
where the up to last inequality follows by the local doubling condition
(\ref{Hp 3}), with a constant $c_{0}$ only depending on $n.$

(ii) $r>\delta^{k}.$ Let $Q_{\beta}^{k+1}\ni x$ (by point (g) it certainly
exists). Since diam$\left(  Q_{\beta}^{k+1}\right)  \leq c_{1}\delta^{k+1},$
\[
Q_{\beta}^{k+1}\subset B\left(  x,c_{1}\delta^{k+1}\right)  \subset B\left(
x,r\right)
\]
as soon as%
\begin{equation}
c_{1}\delta<1 \label{delta 8}%
\end{equation}
hence, by point (a),
\[
\mu\left(  B\left(  x,r\right)  \cap Q_{\alpha}^{k}\right)  \geq\mu\left(
Q_{\beta}^{k+1}\right)  \geq\mu\left(  B\left(  z_{\beta}^{k+1},a_{0}%
\delta^{k+1}\right)  \right)
\]
while, since $z_{\beta}^{k+1}\in Q_{\alpha}^{k}$ and diam$\left(  Q_{\alpha
}^{k}\right)  \leq c_{1}\delta^{k},$%
\[
\mu\left(  B\left(  z_{\beta}^{k+1},c_{1}\delta^{k}\right)  \right)  \geq
\mu\left(  Q_{\alpha}^{k}\right)  .
\]
To conclude (\ref{lower bounds cubes}), which immediately give (h), we have to
apply the local doubling condition, to say that
\[
\mu\left(  B\left(  z_{\beta}^{k+1},c_{1}\delta^{k}\right)  \right)  \leq
c_{0}\mu\left(  B\left(  z_{\beta}^{k+1},a_{0}\delta^{k+1}\right)  \right)  .
\]
This is possible, once we have (at last) fixed $\delta,$ with some constant
depending on $\delta,$ and therefore on $n$. Hence Theorem \ref{Main Thm} is proved.

Finally, note that in our iterative construction, at every step $n$ we can
always choose the number $\delta_{\left(  n\right)  }$ less than or equal to
the number $\delta_{\left(  n-1\right)  }$ chosen at the previous step. Hence
the sequence $\delta_{\left(  n\right)  }$ can be assumed to be nonincreasing.
\end{proof}

\begin{remark}
In the previous proof the reader could be confused by the number of conditions
we have imposed on $\delta$ and the other constants. So, let us summarize the
logical line of this procedure. First, we can assume without loss of
generality that the parameters $\varepsilon_{n},B_{n}$ of $\Omega_{n}$ satisfy
the following:%
\begin{align*}
B_{n+1}  &  \geq B_{n}\geq2\text{ for every }n\text{;}\\
\varepsilon_{n+1}  &  \leq\varepsilon_{n}\leq\frac{1}{2}\text{ for every
}n\text{.}%
\end{align*}
Then we have chosen%
\[
c_{1}=7B_{n+1}^{4}%
\]
and $a_{0}=\delta$, where $\delta$ has to satisfy conditions (\ref{delta 1}),
(\ref{delta 2}), (\ref{delta 3}), (\ref{delta 5}), (\ref{delta 4}),
(\ref{azero 1}), (\ref{delta 6}), (\ref{delta 5'}), (\ref{azero 2}),
(\ref{delta 9}), (\ref{delta 8}), and also (\ref{delta 10}), which will be
used in the proof of Lemma \ref{Lemma pos meas}. With some patience one can
check that a possible choice is
\[
\delta=\frac{1}{2}\min\left(  \varepsilon_{n+1},\frac{\varepsilon_{n}%
}{4B_{n+2}^{2}},\frac{1}{14B_{n+1}^{5}}\right)  .
\]
After $\delta$ has been fixed, the constants $c_{0},c_{2}$ can be determined
in terms of $\delta$ and $C_{n}$.
\end{remark}

Point (h) of the above theorem means that \emph{each set }$Q_{\alpha}%
^{k}\setminus E$ \emph{is a space of homogeneous type}. It is useful to
reinforce the previous statement with the following:

\begin{proposition}
\label{Prop Q homogeneous}For each $\Omega_{n},k$ and $\alpha\in I_{k},$ the
set $Q_{\alpha}^{k}$ is a space of homogeneous type.
\end{proposition}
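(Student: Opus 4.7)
The plan is to bootstrap the doubling estimate of Theorem \ref{Main Thm}(h), which is stated only for $x\in Q_{\alpha}^{k}\setminus E$, to every $x\in Q_{\alpha}^{k}$. Since $Q_{\alpha}^{k}\subset\Omega_{n+1}$ by part (b), the quasitriangle inequality (H5) with constant $B_{n+1}$ holds for all triples of points in $Q_{\alpha}^{k}$, and symmetry and positivity of $\rho$ are automatic from (H1); so $\rho$ restricted to $Q_{\alpha}^{k}$ is a quasidistance. The measure inherited from $\mu$ is a regular Borel measure on $Q_{\alpha}^{k}$. Hence the only real point is to establish a uniform doubling condition for $\mu$ on balls intersected with $Q_{\alpha}^{k}$ centered at \emph{every} $x\in Q_{\alpha}^{k}$.

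Fix $x\in Q_{\alpha}^{k}$ and $r>0$. Since $Q_{\alpha}^{k}$ is open, there is $\eta_{0}>0$ such that $B(x,\eta_{0})\subset Q_{\alpha}^{k}$. Set $\eta=\min(\eta_{0},r/(2B_{n+1}))$. By (H6), $B(x,\eta)$ has positive measure, while $E$ has measure zero, so one can pick a point
\[
x'\in B(x,\eta)\cap (Q_{\alpha}^{k}\setminus E).
\]
Two applications of the quasitriangle inequality in $\Omega_{n+1}$, together with $\rho(x,x')<\eta\leq r/(2B_{n+1})$, give the inclusions
\[
B\bigl(x',\tfrac{r}{2B_{n+1}}\bigr)\cap Q_{\alpha}^{k}\subseteq B(x,r)\cap Q_{\alpha}^{k},\qquad B(x,2r)\cap Q_{\alpha}^{k}\subseteq B(x',3B_{n+1}r)\cap Q_{\alpha}^{k}.
\]

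Now apply Theorem \ref{Main Thm}(h) at $x'\in Q_{\alpha}^{k}\setminus E$: iterating the doubling constant $c_{2}$ a bounded number $N=\lceil\log_{2}(6B_{n+1}^{2})\rceil$ of times yields
\[
\mu\bigl(B(x',3B_{n+1}r)\cap Q_{\alpha}^{k}\bigr)\leq c_{2}^{N}\,\mu\bigl(B(x',\tfrac{r}{2B_{n+1}})\cap Q_{\alpha}^{k}\bigr),
\]
with $c_{2}^{N}$ depending only on $n$. Chaining the two inclusions with this estimate gives the desired doubling
\[
\mu\bigl(B(x,2r)\cap Q_{\alpha}^{k}\bigr)\leq c_{2}^{N}\,\mu\bigl(B(x,r)\cap Q_{\alpha}^{k}\bigr),
\]
valid for all $x\in Q_{\alpha}^{k}$ and $r>0$, which together with the quasidistance properties of $\rho$ proves that $Q_{\alpha}^{k}$ is a space of homogeneous type.

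The only mildly delicate point is the choice of the auxiliary $x'$: it must lie in $Q_{\alpha}^{k}\setminus E$ and be close enough to $x$ that the two inclusions above hold with constants depending only on $n$, but I do not expect a serious obstacle here, as openness of $Q_{\alpha}^{k}$, positivity of the measure of small balls by (H6), and negligibility of $E$ together force such an $x'$ to exist for every $r>0$.
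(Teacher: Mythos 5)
Your proof is correct, and it actually streamlines the paper's argument. Both approaches share the same core idea: since Theorem \ref{Main Thm}(h) only gives doubling at points of $Q_\alpha^k\setminus E$, pick an auxiliary point $x'$ in $Q_\alpha^k\setminus E$ close to the given $x\in Q_\alpha^k\cap E$ (possible because $Q_\alpha^k$ is open and small balls have positive measure while $E$ is null), and transfer the doubling estimate from $x'$ to $x$ via sandwiching inclusions and iterating the doubling constant. The difference is in how you establish those inclusions. The paper works with full balls $B(x,2r)\subset B(y,c_1 r)$ rather than their intersections with $Q_\alpha^k$; to apply the quasitriangle inequality it then needs the whole ball $B(x,2r)$ to sit inside $\Omega_{n+2}$, which forces the restriction $r\le\varepsilon_{n+1}$, and a separate argument (comparing $\mu(Q_\beta^{h_0})$ to $\mu(Q_\alpha^k)$) must then be supplied for $r>\varepsilon_{n+1}$. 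You instead intersect with $Q_\alpha^k$ \emph{before} invoking the quasitriangle inequality, so every point in play lies in $Q_\alpha^k\subset\Omega_{n+1}$ and the constant $B_{n+1}$ is available uniformly for all $r>0$. This eliminates the case split on $r$ and gives a single, uniform argument with an explicit doubling constant $c_2^N$, $N=\lceil\log_2(6B_{n+1}^2)\rceil$, depending only on $n$. The one step you flag as "mildly delicate" — the existence of $x'\in B(x,\eta)\cap(Q_\alpha^k\setminus E)$ — is fine exactly as you argue: $B(x,\min(\eta,\eta_0))\subset Q_\alpha^k$ has positive measure by (H6) while $\mu(E)=0$, so the set is nonempty.
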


Here and in the following, whenever we will write that a set $S\subset\Omega$
is a space of homogeneous type we will mean that $\left(  S,\rho,d\mu\right)
$ is a space of homogeneous type, \emph{with respect to the same }$\rho$
\emph{and} $\mu$ \emph{already defined in} $\Omega$.

\begin{proof}
The only point to prove is that if $x\in Q_{\alpha}^{k}\cap E$ (where $E$ is
like in Theorem \ref{Main Thm}) then%
\[
\mu\left(  B\left(  x,2r\right)  \cap Q_{\alpha}^{k}\right)  \leq c\mu\left(
B\left(  x,r\right)  \cap Q_{\alpha}^{k}\right)  \text{ for any }r>0.
\]
Pick $y\in B\left(  x,\varepsilon r\right)  \cap\left(  Q_{\alpha}%
^{k}\setminus E\right)  $ for some small $\varepsilon$ to be fixed later. Such
$y$ certainly exists, otherwise $E$ would contain the open set $B\left(
x,\varepsilon r\right)  \cap Q_{\alpha}^{k},$ which by (\ref{Hp 3}) has
positive measure, while $E$ has zero measure.

Since $x\in Q_{\alpha}^{k}\subset\Omega_{n+1},$ for $r\leq\varepsilon_{n+1}$
we have $B\left(  x,2r\right)  \subset\Omega_{n+2}$ and we can prove
\begin{equation}
B\left(  x,2r\right)  \subset B\left(  y,c_{1}r\right)  \label{hom inclu1}%
\end{equation}
with $c_{1}=\left(  2+\varepsilon\right)  B_{n+2}.$ Analogously,%
\begin{equation}
B\left(  y,c_{2}r\right)  \subset B\left(  x,r\right)  \label{hom inclu2}%
\end{equation}
provided $\left(  \varepsilon+c_{2}\right)  B_{n+2}<1.$ Hence
(\ref{hom inclu1}), (\ref{hom inclu2}) hold for suitable constants
$c_{2}<1<c_{1}$ and $\varepsilon$ small enough (depending on $n$ but not on
$r$), while by point (h) of Theorem \ref{Main Thm}, since $y\in$ $Q_{\alpha
}^{k}\setminus E$ we have%
\[
\mu\left(  B\left(  y,c_{1}r\right)  \cap Q_{\alpha}^{k}\right)  \leq
c\mu\left(  B\left(  y,c_{2}r\right)  \cap Q_{\alpha}^{k}\right)
\]
for some constant $c$ depending on $c_{1},c_{2}$ and any $r>0$. So we conclude%
\[
\mu\left(  B\left(  x,2r\right)  \cap Q_{\alpha}^{k}\right)  \leq c\mu\left(
B\left(  x,r\right)  \cap Q_{\alpha}^{k}\right)  \text{ for any }%
r\leq\varepsilon_{n+1}.
\]

Let now $r>\varepsilon_{n+1}$. Pick $y\in B\left(  x,\varepsilon
_{n+1}/2B_{n+2}\right)  \cap\left(  Q_{\alpha}^{k}\setminus E\right)  $. By
(g), for any $h$ there exists $Q_{\beta}^{h}\ni y.$ Since diam$\left(
Q_{\beta}^{h}\right)  \leq c_{1}\delta^{h},$ for $z\in$ $Q_{\beta}^{h}$ we
have%
\[
\rho\left(  z,x\right)  \leq B_{n+2}\left(  \rho\left(  z,y\right)
+\rho\left(  y,x\right)  \right)  \leq B_{n+2}c_{1}\delta^{h}+\frac
{\varepsilon_{n+1}}{2}<\varepsilon_{n+1}%
\]
for $h$ large enough. Let $h_{0}$ the minimum integer $\geq k$ such that this
is true, so that $Q_{\beta}^{h_{0}}\subset B\left(  x,\varepsilon
_{n+1}\right)  $. Hence%
\[
\mu\left(  B\left(  x,r\right)  \cap Q_{\alpha}^{k}\right)  \geq\mu\left(
Q_{\beta}^{h_{0}}\right)
\]
while $\mu\left(  B\left(  x,2r\right)  \cap Q_{\alpha}^{k}\right)  \leq
\mu\left(  Q_{\alpha}^{k}\right)  .$ The desired conclusion follows since
$\mu\left(  Q_{\alpha}^{k}\right)  $ and $\mu\left(  Q_{\beta}^{h_{0}}\right)
$ are comparable. (See the last part of the proof of Theorem \ref{Main Thm}).
So we are done.
\end{proof}

By Theorem \ref{Main Thm}, point (f), we know that each family of cubes
$\left\{  Q_{\alpha}^{k}\right\}  _{\alpha\in I_{k}}$ covers $\Omega
_{n}\setminus E.$ Since the cubes $Q_{\alpha}^{k}$ are open and disjoint sets,
it is reasonable that they cannot generally cover the whole $\Omega_{n}$ (if,
for instance, $\Omega_{n}$ is a connected set, this is impossible). On the
other hand, from the proof of the theorem we can read the following fact:

\begin{proposition}
\label{prop closure}For any $\Omega_{n}$ and any $k=1,2,3,...,$ the closure of
$%
{\displaystyle\bigcup_{\alpha\in I_{k}}}
Q_{\alpha}^{k}$ covers $\Omega_{n}.$
\end{proposition}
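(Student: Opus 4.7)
The plan is to extract the conclusion almost directly from the machinery already set up in the proof of Theorem \ref{Main Thm}. Fix $x\in\Omega_{n}$ and a scale $k\geq 1$; I want to exhibit a sequence of points drawn from $\bigcup_{\alpha\in I_{k}}Q_{\alpha}^{k}$ that converges to $x$ in the topology of $\Omega$.

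The starting input is equation (\ref{closure}) from the proof of point (f): since $\Omega_{n}=E_{1}\subseteq E_{2}\subseteq E_{3}\subseteq\dots$, for every $h\geq 1$ there is an index $\alpha(h)\in I_{h}$ such that $\rho(x,z_{\alpha(h)}^{h})<\delta^{h}$. Property (a) of Theorem \ref{Main Thm} tells us that $z_{\alpha(h)}^{h}\in B(z_{\alpha(h)}^{h},a_{0}\delta^{h})\subset Q_{\alpha(h)}^{h}$. For $h\geq k$, property (c) then furnishes some $\beta(h)\in I_{k}$ with $Q_{\alpha(h)}^{h}\subseteq Q_{\beta(h)}^{k}$, so in particular
\[
z_{\alpha(h)}^{h}\in Q_{\beta(h)}^{k}\subseteq\bigcup_{\alpha\in I_{k}}Q_{\alpha}^{k}.
\]

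The remaining point is purely topological: by (H1)(b), $\rho(z_{\alpha(h)}^{h},x)=\rho(x,z_{\alpha(h)}^{h})<\delta^{h}$, so $z_{\alpha(h)}^{h}\in B(x,\delta^{h})$. Since $\delta\in(0,1)$, given any ball $B(x,r)$ we have $\delta^{h}<r$ for $h$ sufficiently large, hence $z_{\alpha(h)}^{h}\in B(x,r)$ eventually. The balls centred at $x$ form a neighbourhood basis at $x$ (by (H2)(a) they are open, and they are nested), so $z_{\alpha(h)}^{h}\to x$ in the topology of $\Omega$. This exhibits $x$ as a limit of points of $\bigcup_{\alpha\in I_{k}}Q_{\alpha}^{k}$, proving $x\in\overline{\bigcup_{\alpha\in I_{k}}Q_{\alpha}^{k}}$.

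There is essentially no obstacle here: the argument is a short corollary of the iterative construction. The only item that might look like an obstacle is ensuring that the $z_{\alpha(h)}^{h}$ with $h\geq k$ genuinely sit inside some cube at level $k$ (as opposed to merely approximating $\Omega_{n}$ at scale $\delta^{h}$), and this is exactly what property (c) of Theorem \ref{Main Thm} guarantees. No additional smallness conditions on $\delta$ or $a_{0}$ need to be imposed beyond those already fixed in the proof of Theorem \ref{Main Thm}.
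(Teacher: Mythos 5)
Your proof is correct and follows essentially the same route as the paper's: both start from (\ref{closure}) to produce the approximating points $z_{\alpha(h)}^{h}$, both place them inside cubes at scale $h$ via the construction (your citation of property (a)), and both ascend to the level-$k$ cubes using property (c) (the paper does this informally as ``contained in some parent cube''). The only cosmetic difference is that you carry out the argument directly at the target level $k$, whereas the paper first treats $k=1$ and then says ``with the same reasoning''.
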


\begin{proof}
Let $x\in\Omega_{n}.$ By (\ref{closure}), $\forall h\geq1$ $\exists
z_{\alpha_{h}}^{h}$ such that $\rho\left(  x,z_{\alpha_{h}}^{h}\right)
<\delta^{h}.$ Hence the sequence $\left\{  z_{\alpha_{k}}^{k}\right\}
_{k=1}^{\infty}$ converges to $x$. Moreover, the point $z_{\alpha_{1}}^{1}$
belongs to $Q_{\alpha_{1}}^{1}\subset\cup_{\beta\in I_{1}}Q_{\beta}^{1};$ the
point $z_{\alpha_{2}}^{2}$ belongs to a cube $Q_{\alpha_{2}}^{2}$ which is
contained in some parent cube $Q_{\gamma}^{1}\subset\cup_{\beta\in I_{1}%
}Q_{\beta}^{1},$ and so on. Hence the whole sequence is contained in
$\cup_{\beta\in I_{1}}Q_{\beta}^{1},$ which means that $x$ belongs to the
closure of $\cup_{\beta\in I_{1}}Q_{\beta}^{1}.$ With the same reasoning we
can say that for any positive integer $h$ the sequence $\left\{  z_{\alpha
_{k}}^{k}\right\}  _{k=h}^{\infty}$ is contained in $\cup_{\beta\in I_{h}%
}Q_{\beta}^{h},$ hence $x$ belongs to the closure of $\cup_{\beta\in I_{h}%
}Q_{\beta}^{h}$ for any $h=1,2,3...$
\end{proof}

The next question we pose is: how many cubes form each family $\left\{
Q_{\alpha}^{k}\right\}  _{\alpha\in I_{k}}$? We expect them to be finitely
many, since they are contained in $\Omega_{n+1},$ which is bounded, they are
pairwise disjoint and have essentially the same diameter. A formal proof of
this fact requires some care. We first need the following lemma, which will be
useful also other times.

\begin{lemma}
\label{Lemma pos meas}For any $k=1,2,3,...$ there exists $c_{n,k}>0$ such that%
\[
\inf_{z\in\Omega_{n}}\mu\left(  B\left(  z,a_{0}\delta^{k}\right)  \right)
\geq c_{n,k}%
\]
where $\delta$ and $a_{0}$ are as in Theorem \ref{Main Thm}.
\end{lemma}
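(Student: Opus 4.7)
The plan is to exploit the compactness of $\overline{\Omega}_n$ together with the local doubling assumption (H6). Set $r=a_0\delta^k$. Since $\overline{\Omega}_n$ is compact by (H4)(i) and the balls are open by (H2)(a), the family $\{B(x,r/(2B_{n+1})):x\in\Omega_n\}$ is an open cover of $\overline{\Omega}_n$: for any $\bar{x}\in\overline{\Omega}_n$ one can choose $x\in\Omega_n$ with $\rho(x,\bar{x})<r/(2B_{n+1})$, and by the symmetry of $\rho$ this means $\bar{x}\in B(x,r/(2B_{n+1}))$. Extracting a finite subcover, I obtain points $x_1,\dots,x_N\in\Omega_n$ such that $\overline{\Omega}_n\subset\bigcup_{i=1}^N B(x_i,r/(2B_{n+1}))$.

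Next, fix $z\in\Omega_n$ and pick $i$ with $z\in B(x_i,r/(2B_{n+1}))$. For any $y\in B(x_i,r/(2B_{n+1}))$, the condition $\rho(y,x_i)<r/(2B_{n+1})<2\varepsilon_n$ together with (H4)(ii) guarantees $y\in\Omega_{n+1}$, provided $\delta$ is sufficiently small; this is the role of the extra condition (\ref{delta 10}) mentioned in the Remark. Since $z,x_i\in\Omega_n\subset\Omega_{n+1}$, the quasitriangle inequality (H5) applies on $\Omega_{n+1}$, giving
\[
\rho(z,y)\le B_{n+1}\bigl(\rho(z,x_i)+\rho(x_i,y)\bigr)<B_{n+1}\cdot\frac{r}{B_{n+1}}=r,
\]
so $B(x_i,r/(2B_{n+1}))\subset B(z,r)$ and therefore $\mu(B(z,r))\ge\mu(B(x_i,r/(2B_{n+1})))$.

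To conclude, the condition $r/(2B_{n+1})\le\varepsilon_n$ (again a smallness requirement on $\delta$ absorbed into (\ref{delta 10})) combined with (H6) at level $n$ yields $\mu(B(x_i,r/(2B_{n+1})))>0$ for each $i$. Because the cover is finite, setting
\[
c_{n,k}=\min_{1\le i\le N}\mu\bigl(B(x_i,r/(2B_{n+1}))\bigr)>0
\]
gives the claimed uniform lower bound independent of $z\in\Omega_n$. The only delicate point is the bookkeeping of smallness conditions on $\delta$: the present lemma introduces one further such constraint beyond those in Theorem~\ref{Main Thm}, but, as the Remark points out, all these constraints can be simultaneously satisfied by choosing $\delta$ small enough in terms of $\varepsilon_n$ and $B_{n+1}$.
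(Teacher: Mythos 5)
Your proof is correct and uses essentially the same approach as the paper: compactness of $\overline{\Omega}_n$ yields a finite cover by small balls, and the quasitriangle inequality lets you compare $\mu(B(z,a_0\delta^k))$ with the finitely many measures of the covering balls. The only minor variation is that you shrink the cover radius to $a_0\delta^k/(2B_{n+1})$ so the inclusion $B(x_i,a_0\delta^k/(2B_{n+1}))\subset B(z,a_0\delta^k)$ is immediate, whereas the paper keeps radius $a_0\delta^k$ and invokes the local doubling condition (H6) to bridge the gap; both routes need the same type of smallness condition on $\delta$ (absorbed into (\ref{delta 10})) and yield the same conclusion.
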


\begin{proof}
Since $\overline{\Omega}_{n}$ is compact (see assumption (H4)), there exists a
finite number of points $z_{1},...,z_{N}\in\overline{\Omega}_{n}$ such that
\[
\overline{\Omega}_{n}\subset%
{\displaystyle\bigcup\limits_{i=1}^{N}}
B\left(  z_{i},a_{0}\delta^{k}\right)  .
\]
Let now $z$ be any point of $\Omega_{n};$ there exists $i_{0}$ such that $z\in
B\left(  z_{i_{0}},a_{0}\delta^{k}\right)  .$ On the other hand, for any pair
of nondisjoint balls of radius $r$ and centers $z,z_{i_{0}},$ we have the
inclusion $B\left(  z_{i_{0}},r\right)  \subset B\left(  z,B_{n+1}\left(
2B_{n+1}+1\right)  r\right)  .$ Assuming
\begin{equation}
\left(  2B_{n+1}+1\right)  a_{0}\delta\leq2\varepsilon_{n} \label{delta 10}%
\end{equation}
we have, by the doubling condition (\ref{Hp 3})%
\[
\mu\left(  B\left(  z,a_{0}\delta^{k}\right)  \right)  \geq c\mu\left(
B\left(  z,\left(  2B_{n}+1\right)  a_{0}\delta^{k}\right)  \right)  \geq
c\mu\left(  B\left(  z_{i_{0}},a_{0}\delta^{k}\right)  \right)  \geq
c\varepsilon\equiv c_{n,k}%
\]
having set%
\[
\varepsilon=\min_{i=1,2,...,N}\mu\left(  B\left(  z_{i},a_{0}\delta
^{k}\right)  \right)  .
\]

\end{proof}

\begin{proposition}
For each $k=1,2,3,...,$ the family $\left\{  Q_{\alpha}^{k}\right\}
_{\alpha\in I_{k}}$ is finite.
\end{proposition}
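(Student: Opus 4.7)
The plan is a direct counting argument from disjointness plus a uniform positive lower bound on the measure of each cube. Three ingredients from Theorem \ref{Main Thm} are relevant: by (b), $\bigcup_{\alpha \in I_k} Q_\alpha^k \subset \Omega_{n+1}$, which has finite measure because $\overline{\Omega}_{n+1}$ is compact; by (e) applied with $\ell=k$, the cubes $Q_\alpha^k$ are pairwise disjoint (any two either coincide or are disjoint, and distinct indices cannot coincide); and by (a), each $Q_\alpha^k$ contains the ball $B(z_\alpha^k, a_0 \delta^k)$.

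The whole issue is therefore to produce a uniform lower bound $\mu(B(z_\alpha^k, a_0 \delta^k)) \geq c_{n,k}>0$ independent of $\alpha \in I_k$. This is exactly the content of Lemma \ref{Lemma pos meas}, with one caveat: the centers $z_\alpha^k$ lie in $E_k \subset \bigcup_{j\geq 1}E_j \subset \Omega_{n+1}$ (by (\ref{Union E_k})), not necessarily in $\Omega_n$. So I would apply the analogue of Lemma \ref{Lemma pos meas} with $\Omega_n$ replaced by $\Omega_{n+1}$: its proof uses only the compactness of $\overline{\Omega}_{n+1}$ (guaranteed by (H4)(i)) together with the local doubling condition (H6) on balls of radius at most $\varepsilon_{n+1}$, which is automatic once $\delta$ has been chosen small enough (analogue of (\ref{delta 10}) with $n$ replaced by $n+1$, harmlessly absorbed in the smallness requirements already imposed on $\delta$ in the previous remark).

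Once this uniform lower bound $c_{n,k}>0$ is in hand, disjointness and the finite ambient measure give
\[
\#I_k \cdot c_{n,k} \leq \sum_{\alpha \in I_k} \mu(B(z_\alpha^k, a_0 \delta^k)) \leq \sum_{\alpha \in I_k} \mu(Q_\alpha^k) = \mu\!\left(\bigcup_{\alpha\in I_k} Q_\alpha^k\right) \leq \mu(\Omega_{n+1}) < \infty,
\]
whence $\#I_k \leq \mu(\Omega_{n+1})/c_{n,k} < \infty$.

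The only delicate point is the first step of adapting Lemma \ref{Lemma pos meas} to centers in $\Omega_{n+1}$; everything else is bookkeeping with properties already proved. No new obstacle is expected, since the lemma's proof transfers verbatim once one checks that the smallness of $\delta$ fixed in the construction is compatible with the doubling condition at radius $a_0 \delta^k$ around points of $\Omega_{n+1}$, which follows from the assumed monotonicity conventions $B_{n+1}\geq B_n$, $\varepsilon_{n+1}\leq \varepsilon_n$.
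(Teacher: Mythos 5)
Your proof is correct and follows exactly the paper's approach: disjointness of the cubes, the balls $B(z_\alpha^k,a_0\delta^k)\subset Q_\alpha^k$, the finite measure of $\Omega_{n+1}$, and the uniform lower bound from Lemma \ref{Lemma pos meas}. You are in fact slightly more careful than the paper, which invokes Lemma \ref{Lemma pos meas} as stated (infimum over $z\in\Omega_n$) without remarking that the centers $z_\alpha^k$ for $k\geq 2$ lie only in $\Omega_{n+1}$; your observation that the lemma must be applied with $\Omega_{n+1}$ in place of $\Omega_n$, and that its proof transfers verbatim under the monotonicity conventions on the constants, closes that small gap.
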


\begin{proof}
Since $%
{\displaystyle\bigcup\limits_{\alpha\in I_{k}}}
Q_{\alpha}^{k}\subset\Omega_{n+1},$ we have%
\[
\mu\left(
{\displaystyle\bigcup\limits_{\alpha\in I_{k}}}
Q_{\alpha}^{k}\right)  \leq\mu\left(  \Omega_{n+1}\right)  <\infty,
\]
(recall that any $\Omega_{n+1}$ has finite measure, as noted after definition
\ref{Def loc hom space}). Since the $Q_{\alpha}^{k}$'s are pairwise disjoint
and by Theorem \ref{Main Thm}, point (a), $Q_{\alpha}^{k}\supset B\left(
z_{\alpha}^{k},a_{0}\delta^{k}\right)  ,$%
\[
\mu\left(
{\displaystyle\bigcup\limits_{\alpha\in I_{k}}}
Q_{\alpha}^{k}\right)  =\sum_{\alpha\in I_{k}}\mu\left(  Q_{\alpha}%
^{k}\right)  \geq\sum_{\alpha\in I_{k}}\mu\left(  B\left(  z_{\alpha}%
^{k},a_{0}\delta^{k}\right)  \right)
\]
where the last sum, by the previous Lemma, is an infinite quantity unless
$I_{k}$ is finite. Therefore $I_{k}$ is finite.
\end{proof}

The finiteness of the covering $\left\{  Q_{\alpha}^{k}\right\}  _{\alpha\in
I_{k}}$ of $\Omega_{n}$ at any scale $k$ is interesting for the following consequence:

\begin{corollary}
\label{corollary covering homog space}For any $k=1,2,3,...$ the set
\[
F_{k}=%
{\displaystyle\bigcup\limits_{\alpha\in I_{k}}}
Q_{\alpha}^{k}%
\]
is a space of homogeneous type. The same conclusion holds for the union of
\emph{any }subfamily of $\left\{  Q_{\alpha}^{k}\right\}  _{\alpha\in I_{k}}$.
The doubling constants depends on $n$ and $k.$
\end{corollary}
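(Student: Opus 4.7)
The plan is to check the two defining properties of a space of homogeneous type for $(F_k, \rho, \mu)$. Since $F_k \subset \Omega_{n+1}$ by Theorem \ref{Main Thm}(b), the quasi-triangle inequality is inherited from (H5) with constant $B_{n+1}$. So the real content is the doubling inequality $\mu(B(x, 2r) \cap F_k) \leq c\, \mu(B(x, r) \cap F_k)$ for every $x \in F_k$ and every $r > 0$, with $c$ depending on $n$ and $k$.

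I would split into two regimes. For small $r$ (below a threshold determined by $\delta^k$ and $\varepsilon_{n+1}$), the upper bound $\mu(B(x, 2r) \cap F_k) \leq \mu(B(x, 2r))$ is controlled by one or two iterations of the local doubling (H6) in $\Omega_{n+1}$, giving $\leq C\, \mu(B(x, r))$ with $C = C(n)$. The matching lower bound $\mu(B(x, r) \cap F_k) \geq c_0\, \mu(B(x, r))$ follows directly from Theorem \ref{Main Thm}(h) when $x \in Q_\alpha^k \setminus E$. For $x \in E$ I would use the same device as in the proof of Proposition \ref{Prop Q homogeneous}: pick $y \in B(x, \varepsilon r) \cap (Q_\alpha^k \setminus E)$ (which exists since $E$ is null while $B(x, \varepsilon r) \cap Q_\alpha^k$ has positive measure by (H6)), apply Theorem \ref{Main Thm}(h) at $y$, and transfer the estimate back to $x$ via the inclusions $B(y, c_2 r) \subset B(x, r) \subset B(y, c_3 r)$ combined with a bounded number of iterations of local doubling.

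For large $r$ the inequality is essentially trivial, thanks to finiteness. The upper bound is $\mu(B(x, 2r) \cap F_k) \leq \mu(F_k) \leq \mu(\Omega_{n+1}) < \infty$. For the lower bound, $x \in Q_\alpha^k$ lies within $\rho$-distance $c_1 \delta^k$ of the center $z_\alpha^k$ by Theorem \ref{Main Thm}(d), so once $r$ exceeds $B_{n+1}(c_1 + 1)\delta^k$ we have $B(x, r) \supset B(z_\alpha^k, a_0 \delta^k)$, and Lemma \ref{Lemma pos meas} gives $\mu(B(z_\alpha^k, a_0 \delta^k)) \geq c_{n,k} > 0$. The intermediate range of $r$, where neither $2r \leq \varepsilon_{n+1}$ is immediate nor the inclusion above holds, is bridged again by finitely many applications of local doubling, with the number of steps controlled by $n$ and $k$.

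The statement for arbitrary subfamilies requires no separate argument: nothing above used that the index set was all of $I_k$, only that the union is a finite disjoint collection of cubes at scale $k$ contained in $\Omega_{n+1}$, and that each cube has measure bounded below (which still holds, Lemma \ref{Lemma pos meas}). The main delicate point is the $x \in E$ case in the small-$r$ regime; once the trick from Proposition \ref{Prop Q homogeneous} is imported, everything else amounts to bookkeeping of constants depending on $n$ and $k$.
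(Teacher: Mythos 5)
Your proposal follows essentially the same route as the paper: for $x\in Q_\alpha^k\setminus E$ use the lower bound of Theorem \ref{Main Thm}(h) on $\mu(B(x,r)\cap Q_\alpha^k)$, for $x\in E$ import the approximation trick from Proposition \ref{Prop Q homogeneous}, use Lemma \ref{Lemma pos meas} together with finiteness of $I_k$ for the large-radius regime, and observe that the subfamily case costs nothing. The one place where you deviate cosmetically is the radius split. The paper splits only at $r=\delta^k$, because the estimate (\ref{lower bounds cubes}) already yields $\mu(B(x,r)\cap Q_\alpha^k)\geq c_0\,\mu(Q_\alpha^k)$ for \emph{every} $r>\delta^k$, which paired with the trivial $\mu(B(x,2r)\cap F_k)\leq\mu(F_k)$ and the comparability $\mu(Q_\alpha^k)\geq c\,\mu(F_k)$ (a consequence of finiteness and Lemma \ref{Lemma pos meas}) disposes of all large radii at once. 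By contrast you introduce a separate ``large $r$'' threshold $B_{n+1}(c_1+1)\delta^k$ needed to absorb $B(z_\alpha^k,a_0\delta^k)$, creating an intermediate window $\delta^k<r\leq B_{n+1}(c_1+1)\delta^k$ that you propose to close by iterated local doubling. That bridge needs care: balls of radius a fixed multiple of $\delta^k$ (with the multiple of order $B_{n+1}c_1\sim B_{n+1}^5$) are not guaranteed to stay under the doubling threshold $\varepsilon_{n+1}$, so one cannot blindly iterate (H6). The cleanest fix is simply to use (\ref{lower bounds cubes}) for all $r>\delta^k$, which collapses your intermediate and large regimes into one and matches the paper exactly. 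With that repair your argument is correct, and your treatment of $x\in E$ and of arbitrary subfamilies is right as stated.
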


\begin{proof}
We have to prove that%
\begin{equation}
\mu\left(  B\left(  x,2r\right)  \cap F_{k}\right)  \leq c\mu\left(  B\left(
x,r\right)  \cap F_{k}\right)  \text{ for any }r>0,x\in F_{k}.
\label{C^k homogeneous}%
\end{equation}
Let us first prove this inequality when $x\in F_{k}\setminus E,$ where $E$ is
the null set appearing in Theorem \ref{Main Thm}. So, let $x\in Q_{\alpha}%
^{k}\setminus E$ for some $\alpha\in I_{k}$ and let $r>0.$ We will apply
(\ref{lower bounds cubes}) in Theorem \ref{Main Thm}, distinguishing the cases
$r\leq\delta^{k}$ and $r>\delta^{k}.$

When $r\leq\delta^{k}$, by the doubling condition (\ref{Hp 3}) we have
\begin{align*}
\mu\left(  B\left(  x,r\right)  \cap F_{k}\right)   &  \geq\mu\left(  B\left(
x,r\right)  \cap Q_{\alpha}^{k}\right)  \geq c_{0}\mu\left(  B\left(
x,r\right)  \right) \\
&  \geq\frac{c_{0}}{C_{n}}\mu\left(  B\left(  x,2r\right)  \right)  \geq
\frac{c_{0}}{C_{n}}\mu\left(  B\left(  x,2r\right)  \cap F_{k}\right)  .
\end{align*}
When $r>\delta^{k}$%
\begin{align*}
\mu\left(  B\left(  x,r\right)  \cap F_{k}\right)   &  \geq\mu\left(  B\left(
x,r\right)  \cap Q_{\alpha}^{k}\right)  \geq c_{0}\mu\left(  Q_{\alpha}%
^{k}\right) \\
&  \geq c_{n,k}\mu\left(  F_{k}\right)  \geq c_{n,k}\mu\left(  B\left(
x,2r\right)  \cap F_{k}\right)
\end{align*}
where in the up to last inequality we have used the fact that the $Q_{\alpha
}^{k}$ are finitely many open sets, each of positive measure, while
$\mu\left(  F_{k}\right)  \leq\mu\left(  \Omega_{n+1}\right)  <\infty,$ so
that for some constant $c$ depending on $n$ and $k$ (but not on $\alpha$), we
can write $\mu\left(  Q_{\alpha}^{k}\right)  \geq c\mu\left(  F_{k}\right)  .$

If now $x\in F_{k}\cap E,$ we can repeat the same reasoning used in the proof
of Proposition \ref{Prop Q homogeneous} to show that (\ref{C^k homogeneous})
still holds. This completes the proof.
\end{proof}

Summarizing several results proved so far, we can say that for any $n$ there
exists a space of homogeneous type $F_{k},$ contained in $\Omega_{n+1}$ and
essentially containing $\Omega_{n},$ in the sense that $\Omega_{n}\setminus
E\subset F_{k}$ (by Theorem \ref{Main Thm}, f) and $\Omega_{n}\subset
\overline{F}_{k}$ (by Proposition \ref{prop closure}). In view of our
applications to singular integrals, it is important to get a local and more
quantitative version of this result. This is contained in the following
theorem, which is the main result in this section. Since it involves different
sets $\Omega_{n},$ here we have to add an index $n$ to the number $\delta$ and
the cubes $Q_{\alpha}^{k}$.

\begin{theorem}
\label{Thm F}For every $n$ there exists $R_{n}>0$ such that for any
$\overline{x}\in\Omega_{n}$ and $R\leq R_{n}$ there exists an open set $F$
such that:

(i) $F$ is a space of homogeneous type; its doubling constant depends on $n$
but not on $R$;

(ii) $B\left(  \overline{x},R\right)  \setminus E\subset F\subset\Omega_{n+2}$
(with $\mu\left(  E\right)  =0$);

(iii) $B\left(  \overline{x},R\right)  \subset\overline{F};$

(iv) diam$F\leq cR$ for some constant $c$ depending on $n$ but not on $R$;

(v) $\mu\left(  F\right)  \leq c\mu\left(  B\left(  \overline{x},R\right)
\right)  $ for some constant $c$ depending on $n$ but not on $R$.
\end{theorem}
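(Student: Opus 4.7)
The plan is to build $F$ as a finite union of Christ-type dyadic cubes at a scale matched to $R$, taken from the family that Theorem \ref{Main Thm} produces when applied to $\Omega_{n+1}$. Denote by $\delta, a_0, c_0, c_1$ the constants of that construction (they depend on $n$). First choose $R_n > 0$ small enough that $R_n \leq \delta$, that $B(\bar x, R_n) \subset \Omega_{n+1}$ for every $\bar x \in \Omega_n$ (permitted by (\ref{Hp 1}) as soon as $R_n \leq 2\varepsilon_n$), and that $c R_n \leq \varepsilon_{n+2}$ for the constant $c$ appearing in the diameter bound below. Given $\bar x \in \Omega_n$ and $R \leq R_n$, let $k \geq 1$ be the unique integer with $\delta^{k+1} < R \leq \delta^k$, and set
\[
F = \bigcup \{ Q_\alpha^k : Q_\alpha^k \cap B(\bar x, R) \neq \emptyset \}.
\]
This is open and, by Theorem \ref{Main Thm}(b), contained in $\Omega_{n+2}$.

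Properties (ii)--(v) would then be checked by direct bookkeeping. For (ii), any $x \in B(\bar x, R) \setminus E$ lies in some $Q_\alpha^k$ by Theorem \ref{Main Thm}(f), and that cube is automatically in the family defining $F$. Property (iii) follows from Proposition \ref{Holder negligible}(i): $B(\bar x, R)$ is open by (H2)(a) and $E$ has measure zero, so $B(\bar x, R) \subset \overline{B(\bar x, R)} = \overline{B(\bar x, R) \setminus E} \subset \overline F$. For (iv), each cube in $F$ has diameter $\leq c_1 \delta^k \leq c_1 R/\delta$ by Theorem \ref{Main Thm}(d) and meets $B(\bar x, R)$, so two applications of the quasi-triangle inequality in $\Omega_{n+2}$ give $F \subset B(\bar x, c R)$ with $c$ depending only on $n$, hence $\mathrm{diam}\, F \leq 2 B_{n+2} c R$. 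Property (v) is then obtained by iterating the doubling condition (\ref{Hp 3}) a bounded (in $n$) number of times to pass from $\mu(B(\bar x, R))$ to $\mu(B(\bar x, cR))$, which is legitimate by the choice of $R_n$.

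The real obstacle is (i): Corollary \ref{corollary covering homog space} gives a doubling constant depending on both $n$ and $k$, whereas here it must depend on $n$ alone. I would revisit its proof, exploiting that the subfamily we selected is highly structured: every cube in $F$ has interior radius $a_0 \delta^k$ and diameter $\leq c_1 \delta^k$, both comparable to $R$, and $F$ itself is trapped in a ball of radius $\sim R$. The quasi-triangle inequality for $F$ is inherited from (H5) in $\Omega_{n+2}$. For $x \in F \setminus E$ in some $Q_\alpha^k$: when $r \leq \delta^k$, (\ref{lower bounds cubes}) gives $\mu(B(x,r) \cap F) \geq c_0 \mu(B(x,r))$, and a single application of (\ref{Hp 3}) controls $\mu(B(x,2r) \cap F) \leq \mu(B(x,2r)) \leq C_{n+2}\mu(B(x,r))$. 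When $r > \delta^k$, (\ref{lower bounds cubes}) gives $\mu(B(x,r) \cap F) \geq c_0 \mu(Q_\alpha^k) \geq c_0 \mu(B(z_\alpha^k, a_0 \delta^k))$; since $z_\alpha^k$ is at distance $\leq c_1 \delta^k + R \leq c'' R$ from $\bar x$, iterated doubling (a bounded number of steps, as $a_0 \delta^k$ and $R$ are comparable up to constants in $n$) shows this is comparable to $\mu(B(\bar x, R))$, while $\mu(B(x,2r) \cap F) \leq \mu(F) \leq C \mu(B(\bar x, R))$ by (v). The case $x \in F \cap E$ is reduced to the previous one by replacing $x$ with a nearby point of $F \setminus E$, exactly as in the proof of Proposition \ref{Prop Q homogeneous}.
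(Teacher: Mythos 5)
Your proof is correct and follows the same basic strategy as the paper: take $F$ to be a finite union of Christ dyadic cubes, adapted to $\Omega_{n+1}$, at a scale matched to $R$, and verify the doubling estimate by combining (\ref{lower bounds cubes}) with ball comparisons. The one place where your bookkeeping is genuinely cleaner is the choice of scale: you pick $k$ via $\delta_{(n+1)}^{k+1} < R \le \delta_{(n+1)}^{k}$, so the cubes $Q_\alpha^{(n+1),k}$ have inner radius $a_0\delta_{(n+1)}^k$ and diameter $\lesssim \delta_{(n+1)}^k$, both comparable to $R$ with constants depending only on $n$. The paper instead fixes $k$ via $\delta_{(n)}^{k+1} < R \le \delta_{(n)}^{k}$ (the parameter of the $\Omega_n$ family) while covering with cubes from the $\Omega_{n+1}$ family at scale $\delta_{(n+1)}^k$; since $\delta_{(n+1)}$ may be strictly smaller than $\delta_{(n)}$, the ratio $\delta_{(n)}^k/\delta_{(n+1)}^k$ grows with $k$, so the comparability of $\mu(Q_\beta^{(n+1),k})$ with $\mu(F)$ — which the paper invokes to conclude that the doubling constant is independent of $R$ — requires a number of doubling steps that is not a priori bounded in $k$. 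Your version avoids this scale mismatch entirely. You also bypass the intermediate point $z_\alpha^{(n),k}$ and select cubes intersecting $B(\bar{x},R)$ directly, and you carry out the doubling estimate for $F$ by hand (the two cases $r \le \delta^k$, $r > \delta^k$, using (\ref{lower bounds cubes}), $F \subset B(\bar{x},cR)$, and the null-set reduction from Proposition \ref{Prop Q homogeneous}) rather than referring back to the proof of Corollary \ref{corollary covering homog space}; this is more explicit but substantively the same argument. One small point: your statement $c R_n \le \varepsilon_{n+2}$ uses the constant $c$ from (iv) before it is derived, so it should be phrased as first computing $c$ (which only involves $B_{n+2}$, $c_1$, $\delta$) and then shrinking $R_n$ accordingly, which is what the paper also does.
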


The independence of the constants from $R$ will be precious when dealing with
commutators of singular or fractional integrals with $VMO$ functions. Clearly,
the whole $\Omega_{n}$ can be covered, for any $R\leq R_{n}$, by a finite
number of balls $B\left(  x_{i},R\right)  ,$ to which this theorem is applicable.

\begin{remark}
The reader could ask why we do not consider the set $\overline{F}$ (which
satisfies the simple inclusions $B\left(  \overline{x},R\right)
\subset\overline{F}\subset\Omega_{n+2}$) instead of $F$ (which does not
exactly contain $B\left(  \overline{x},R\right)  $). The problem with
$\overline{F}$ is that, in our abstract context, it is not obvious how to
prove that it is a space of homogeneous type, too.
\end{remark}

\begin{proof}
Fix $\overline{x}\in\Omega_{n}$ and let $R_{n}=\delta_{\left(  n\right)
}^{k_{0}}$ for a $k_{0}$ to be chosen later, but such that $R_{n}%
\leq2\varepsilon_{n}$, hence $B\left(  \overline{x},R\right)  \subset
\Omega_{n+1}.$ For $R\leq R_{n},$ pick $k\geq k_{0}$ such that $\delta
_{\left(  n\right)  }^{k+1}<R\leq\delta_{\left(  n\right)  }^{k}.$ For these
$n$ and $k,$ there exists $\alpha\in I_{k}^{\left(  n\right)  }$ such that
(see Theorem \ref{Main Thm})%
\[
\overline{x}\in Q_{\alpha}^{\left(  n\right)  ,k}\subset B\left(  z_{\alpha
}^{\left(  n\right)  ,k},c_{1,\left(  n\right)  }\delta_{\left(  n\right)
}^{k}\right)  \subset\Omega_{n+1}.
\]
For any $y\in B\left(  \overline{x},R\right)  $ we can write%
\[
\rho\left(  y,z_{\alpha}^{\left(  n\right)  ,k}\right)  \leq B_{n+1}\left(
\rho\left(  y,\overline{x}\right)  +\rho\left(  \overline{x},z_{\alpha
}^{\left(  n\right)  ,k}\right)  \right)  <B_{n+1}\left(  \delta_{\left(
n\right)  }^{k}+c_{1,\left(  n\right)  }\delta_{\left(  n\right)  }%
^{k}\right)  \equiv h_{n}\delta_{\left(  n\right)  }^{k}%
\]
hence%
\[
B\left(  \overline{x},R\right)  \subset B\left(  z_{\alpha}^{\left(  n\right)
,k},h_{n}\delta_{\left(  n\right)  }^{k}\right)
\]
with%
\[
h_{n}=B_{n+1}\left(  1+c_{1,\left(  n\right)  }\right)  .
\]
Choose $k_{0}$ (and consequently $R_{n}$) so that $h_{n}\delta_{\left(
n\right)  }^{k_{0}}\leq2\varepsilon_{n},$ hence
\[
B\left(  z_{\alpha}^{\left(  n\right)  ,k},h_{n}\delta_{\left(  n\right)
}^{k}\right)  \subset\Omega_{n+1}%
\]
for any $k\geq k_{0},$ and so for any $R\leq R_{n}$. Since $\Omega_{n+1}$ is
covered (up to a null set) by the union of all the dyadic cubes $Q_{\beta
}^{\left(  n+1\right)  ,k},$ we can define the set%
\[
F=%
{\displaystyle\bigcup}
\left\{  Q_{\beta}^{\left(  n+1\right)  ,k}:Q_{\beta}^{\left(  n+1\right)
,k}\cap B\left(  z_{\alpha}^{\left(  n\right)  ,k},h_{n}\delta_{\left(
n\right)  }^{k}\right)  \neq\emptyset\right\}
\]
and we immediately get%
\[
B\left(  \overline{x},R\right)  \setminus E\subset B\left(  z_{\alpha
}^{\left(  n\right)  ,k},h_{n}\delta_{\left(  n\right)  }^{k}\right)
\setminus E\subset F\subset\Omega_{n+2},
\]
that is (ii). Moreover, by Proposition \ref{prop closure} we also have
$B\left(  \overline{x},R\right)  \subset\overline{F},$ which is (iii).

By Corollary \ref{corollary covering homog space}, $F\ $is a space of
homogeneous type. Note that, for the moment, we only know that its doubling
constant depends on $n$ and $k$ (that is on $R$); we want to prove that it
actually only depends on $n$.

Since%
\[
\text{diam}Q_{\beta}^{\left(  n+1\right)  ,k}<c_{1,\left(  n+1\right)  }%
\delta_{\left(  n+1\right)  }^{k}\leq c_{1,\left(  n+1\right)  }%
\delta_{\left(  n\right)  }^{k}%
\]
(the sequence $\delta_{\left(  n\right)  }$ is nonincreasing) and each of the
cubes defining $F$ intersects $B\left(  z_{\alpha}^{\left(  n\right)
,k},h_{n}\delta_{\left(  n\right)  }^{k}\right)  ,$ the quasitriangle
inequality in $\Omega_{n+2}$ gives diam$F\leq c\delta_{\left(  n\right)  }%
^{k}$ for some constant $c$ depending on $n,$ that is (iv). Finally, since
$B\left(  \overline{x},R\right)  \supset B\left(  \overline{x},\delta_{\left(
n\right)  }^{k+1}\right)  $, repeated applications of the quasitriangle
inequality in $\Omega_{n+2}$ give $F\subset B\left(  \overline{x}%
,j_{n}R\right)  $ for some constant $j_{n}$ dependent on $n$ but not on $R$.
Shrinking if necessary the number $R_{n}$ (that is enlarging the integer
$k_{0}$) we can assure that the local doubling condition in $\Omega_{n+2}$ is
applicable to the ball $B\left(  \overline{x},j_{n}R\right)  $ for $R\leq
R_{n}$ and conclude that%
\[
\mu\left(  F\right)  \leq\mu\left(  B\left(  \overline{x},j_{n}R\right)
\right)  \leq cB\left(  \overline{x},R\right)
\]
for some constant $c$ depending on $n$ but not on $R$, that is (v). This also
implies that $\mu\left(  F\right)  $ is comparable to $\mu\left(  Q_{\beta
}^{\left(  n+1\right)  ,k}\right)  $ for any of the cubes defining $F$. Hence
we can now prove that the doubling constant of $F$ only depends on $n$.
Namely, revising the last part of the proof of Corollary
\ref{corollary covering homog space} we can see that inequality%
\[
c_{0}\mu\left(  Q_{\alpha}^{k}\right)  \geq c_{n,k}\mu\left(  F_{k}\right)
\]
now rewrites as
\[
c_{0}\mu\left(  Q_{\beta}^{\left(  n+1\right)  ,k}\right)  \geq c_{n}%
\mu\left(  F\right)
\]
and we are done.
\end{proof}

\section{H\"{o}lder continuous functions\label{Section Holder}}

In several problems related to singular or fractional integrals we will need
H\"{o}lder continuous cutoff functions adapted to concentric balls. This
construction is classical and does not depend on the doubling condition, so
can be performed in any $\Omega_{n}$ as in usual spaces of homogeneous type.

Fix $\Omega_{n}.$ The function $\rho$ is a quasidistance in $\Omega_{n},$
hence by known results of Macias-Segovia \cite[Thm. 2]{MS} we can build a new
quasidistance $d$ in $\Omega_{n},$ equivalent to $\rho$ in $\Omega_{n}$, and
such that for some $\alpha\in\left(  0,1\right)  $ $d$ is \emph{of order
}$\alpha,$ which means that%
\begin{equation}
\left\vert d\left(  x_{1},y\right)  -d\left(  x_{2},y\right)  \right\vert \leq
cd\left(  x_{1},x_{2}\right)  ^{\alpha}\left\{  d\left(  x_{1},y\right)
^{1-\alpha}+d\left(  x_{2},y\right)  ^{1-\alpha}\right\}  \label{MS}%
\end{equation}
for some constant $c>0,$ any $x,y,z\in\Omega_{n}.$ Here and in the following,
saying that two functions $\rho_{1}\left(  x,y\right)  ,\rho_{2}\left(
x,y\right)  $ are equivalent in $\Omega_{n}$ means that for two positive
constants $c_{1},c_{2}>0$ we have%
\[
c_{1}\rho_{1}\left(  x,y\right)  \leq\rho_{2}\left(  x,y\right)  \leq
c_{2}\rho_{1}\left(  x,y\right)  \text{ for any }x,y\in\Omega_{n}.
\]

It is worthwhile to note that the exponent $\alpha$ in (\ref{MS}) depends on
$n;$ from the proof given in \cite[Thm. 2]{MS} we read $\alpha=1/\log
_{2}\left(  3B_{n}^{2}\right)  ,$ which is not optimal in the sense that for
$B_{n}=1$ (that is when $\rho$ is a distance) does not say that (\ref{MS})
holds with $\alpha=1$.

Let us write $B_{d}\left(  x,r\right)  $ for the $d$-ball of center $x$ and
radius $r.$ Now, for any $x_{0}\in\Omega_{n}$ with $B_{d}\left(
x_{0},2r\right)  \subset\Omega_{n}$ we can define the function%
\[
\phi\left(  x\right)  =\psi\left(  d\left(  x,x_{0}\right)  \right)
\]
where
\[
\psi\left(  t\right)  =\left\{
\begin{array}
[c]{ll}%
1 & 0\leq t\leq r\\
2-t/r & r\leq t\leq2r\\
0 & t\geq2r
\end{array}
\right.  .
\]
A standard computation exploiting (\ref{MS}) and the equivalence between
$\rho$ and $d$ allows to prove the following:

\begin{proposition}
\label{Prop cutoff}For any $\Omega_{n}$ there exists an exponent $\alpha>0$
and two constants $c_{1}<1,c_{2}>2,$ such that for any $x_{0}\in\Omega_{n}$
and $r>0$ with $B\left(  x,c_{2}r\right)  \subset\Omega_{n}$ there exists a
function $\phi$ with the following properties:%
\begin{align*}
0  &  \leq\phi\left(  x\right)  \leq1;\\
\phi\left(  x\right)   &  =1\text{ for }x\in B\left(  x_{0},c_{1}r\right) \\
\phi\left(  x\right)   &  =0\text{ for }x\notin B\left(  x_{0},c_{2}r\right)
\\
\left\vert \phi\left(  x_{1}\right)  -\phi\left(  x_{2}\right)  \right\vert
&  \leq c\left(  \frac{\rho\left(  x_{1},x_{2}\right)  }{r}\right)  ^{\alpha
}\text{ for any }x_{1},x_{2}\in\Omega_{n}.
\end{align*}

\end{proposition}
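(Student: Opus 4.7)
The plan is to verify that the function $\phi(x) = \psi(d(x,x_0))$ already displayed satisfies all the required properties, for suitable choices of $c_1 < 1$ and $c_2 > 2$ in terms of the equivalence constants between $d$ and $\rho$. Let $\lambda_1, \lambda_2 > 0$ be such that $\lambda_1 \rho(x,y) \leq d(x,y) \leq \lambda_2 \rho(x,y)$ for $x, y \in \Omega_n$; after enlarging $\lambda_2$ and shrinking $\lambda_1$ if necessary, assume $\lambda_1 < 1/2$ and $\lambda_2 > 1$, and set $c_1 = \lambda_2^{-1}$ and $c_2 = 2\lambda_1^{-1}$. Then $c_1 < 1 < 2 < c_2$, the inclusion $B(x_0, c_1 r)\subset\{d(\cdot,x_0)\leq r\}$ gives $\phi\equiv 1$ on $B(x_0,c_1 r)$, and the inclusion $\{d(\cdot,x_0)< 2r\}\subset B(x_0, c_2 r)$ gives $\phi\equiv 0$ outside $B(x_0, c_2 r)$. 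The bounds $0\leq\phi\leq 1$ are immediate from the definition of $\psi$, and the hypothesis $B(x_0, c_2 r)\subset\Omega_n$ ensures that everything happens inside $\Omega_n$, where $d$ and (\ref{MS}) are available.

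For the H\"older estimate I would first observe that $\psi$ is globally $(1/r)$-Lipschitz (easy inspection of its three pieces), so
\begin{equation*}
|\phi(x_1)-\phi(x_2)|\leq \frac{1}{r}\,|d(x_1,x_0)-d(x_2,x_0)|
\end{equation*}
for every $x_1,x_2\in\Omega_n$. Applying (\ref{MS}) yields
\begin{equation*}
|\phi(x_1)-\phi(x_2)|\leq \frac{c}{r}\,d(x_1,x_2)^{\alpha}\bigl[d(x_1,x_0)^{1-\alpha}+d(x_2,x_0)^{1-\alpha}\bigr].
\end{equation*}

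The only non-cosmetic difficulty is controlling the factors $d(x_i,x_0)^{1-\alpha}$, which a priori can be arbitrarily large, so I would handle this with a dichotomy. If $d(x_1,x_2)\geq r$, then $(\rho(x_1,x_2)/r)^{\alpha}$ is bounded below by a positive constant (by the equivalence of $d$ and $\rho$), and the trivial bound $|\phi(x_1)-\phi(x_2)|\leq 1$ already implies the desired inequality. If $d(x_1,x_2)< r$, then either both $d(x_i,x_0)\geq 2r$ (whence $\phi(x_1)=\phi(x_2)=0$ and there is nothing to prove) or, up to relabelling, $d(x_1,x_0)< 2r$; the quasitriangle inequality in $\Omega_n$ then forces $d(x_2,x_0)\leq B_n(d(x_2,x_1)+d(x_1,x_0))< 3B_n r$, so both $d(x_i,x_0)^{1-\alpha}\leq (3B_n r)^{1-\alpha}$. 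Substituting into the displayed inequality and invoking once more the equivalence of $d$ and $\rho$ gives $|\phi(x_1)-\phi(x_2)|\leq c''(\rho(x_1,x_2)/r)^{\alpha}$, as required. This dichotomy, which keeps (\ref{MS}) from deteriorating when $x_1,x_2$ roam far from $x_0$, is the main (though still minor) point of the proof.
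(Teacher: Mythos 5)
Your proof is correct and is exactly the ``standard computation'' the paper alludes to without writing out: same cutoff $\phi=\psi\circ d(\cdot,x_0)$, same use of the Mac\'ias--Segovia inequality (\ref{MS}), and the concluding dichotomy (which tames the a priori unbounded factors $d(x_i,x_0)^{1-\alpha}$ by splitting into the cases $d(x_1,x_2)\geq r$ and $d(x_1,x_2)<r$) is the substantive step, carried out correctly. The only trivial slip is that in $d(x_2,x_0)\leq B_n\bigl(d(x_2,x_1)+d(x_1,x_0)\bigr)$ the constant should be the quasitriangle constant of $d$, not $B_n$ itself; since $d$ is equivalent to $\rho$ on $\Omega_n$ that constant is still controlled by $B_n$ and the equivalence constants $\lambda_1,\lambda_2$, so the argument stands unchanged.
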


The cutoff function $\phi$ belongs to the space $C_{0}^{\alpha}\left(
\Omega_{n}\right)  .$ Note that we can build such cutoff functions only for
$\alpha\leq\alpha_{0}$ where the threshold $\alpha_{0}$ depends on the space
$\Omega_{n}.$ We will briefly write%
\[
\phi\in C_{0}^{\alpha}\left(  \Omega_{n}\right)  ,\text{ }B\left(  x_{0}%
,c_{1}r\right)  \prec\phi\prec B\left(  x_{0},c_{2}r\right)
\]
to say that $\phi$ has all the properties stated in the above proposition.

By our assumption of regularity of the measure $\mu$, the above result
\cite[Thm. 2]{MS} also implies, by a fairly standard argument, that for any
bounded Borel set $E$ we can build a H\"{o}lder continuous function which
approximates in $L^{p}$ norm, for any $p\in\lbrack1,\infty)$, the
characteristic function of $E$. Therefore the following density result holds:

\begin{proposition}
\label{Prop density}For any $\Omega_{n}$ there exists $\alpha_{0}>0,$
depending on $n$, such that for any $\alpha\in(0,\alpha_{0}],$ any
$p\in\lbrack1,\infty),$ the space $C^{\alpha}\left(  \Omega_{n}\right)  $ is
dense in $L^{p}\left(  \Omega_{n}\right)  $. If $\rho$ is a distance we can
take $\alpha_{0}=1.$
\end{proposition}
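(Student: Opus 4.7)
The plan is to follow the classical three-step reduction---simple functions, then Borel characteristic functions, then an explicit H\"older interpolant between a compact inner approximation and an open outer approximation---using the Macias--Segovia quasidistance as the source of H\"older regularity.

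First, since $\mu(\Omega_n) < \infty$ (noted after Definition \ref{Def loc hom space}), finite linear combinations of characteristic functions of Borel subsets of $\Omega_n$ are dense in $L^p(\Omega_n)$ for every $p \in [1,\infty)$. It therefore suffices to approximate a single $\chi_E$, $E \subset \Omega_n$ Borel, by $\alpha$-H\"older functions. By the regularity of $\mu$ in (H3), for every $\eta > 0$ I would then choose a compact $K$ and an open $U$ with $K \subset E \subset U \subset \Omega_n$ and $\mu(U \setminus K) < \eta$, and reduce to constructing an $\alpha$-H\"older $f \colon \Omega_n \to [0,1]$ with $f \equiv 1$ on $K$ and $f \equiv 0$ on $V := \Omega_n \setminus U$; for any such $f$ one has $|f - \chi_E| \le \chi_{U\setminus K}$ and hence $\|f - \chi_E\|_{L^p(\Omega_n)} \le \eta^{1/p}$.

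For the construction, I would invoke \cite[Thm.~2]{MS} to produce a quasidistance $d$ on $\Omega_n$, equivalent to $\rho$ and satisfying (\ref{MS}) with exponent $\alpha_0 = 1/\log_2(3B_n^2)$, and set
\[
g(x) = d(x, K), \qquad h(x) = d(x, V), \qquad f(x) = \frac{h(x)}{g(x) + h(x)}.
\]
Then $f \equiv 1$ on $K$, $f \equiv 0$ on $V$, and $0 \le f \le 1$ on $\Omega_n$. The H\"older regularity of $f$ rests on two ingredients. (a) For any nonempty $A \subset \Omega_n$, the function $d(\cdot, A)$ is $\alpha_0$-H\"older on $\Omega_n$: since $d$ is bounded on $\Omega_n$, inequality (\ref{MS}) gives $d(x_1, y) \le d(x_2, y) + C\, d(x_1, x_2)^{\alpha_0}$ uniformly for $y \in A$, and an infimum in $y$ together with swapping $x_1, x_2$ yields the bound. (b) The denominator $g + h$ is bounded below on $\Omega_n$ by a positive constant: since $K$ is compact, $V$ closed, and $K \cap V = \emptyset$, the continuity of $d$ forces $d(K, V) > 0$; applying the quasitriangle inequality to $y_1 \in K$, $y_2 \in V$, $x \in \Omega_n$ and taking infima gives $g(x) + h(x) \ge d(K, V)/c$ for a constant $c$ depending on the quasitriangle constant of $d$. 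With (a) and (b) in hand, a direct algebraic estimate for the quotient $h/(g+h)$ produces $f \in C^{\alpha_0}(\Omega_n)$. Since $\Omega_n$ is bounded, $C^{\alpha_0}(\Omega_n) \subset C^{\alpha}(\Omega_n)$ for every $\alpha \in (0, \alpha_0]$, completing the argument.

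When $\rho$ is already a distance, no Macias--Segovia step is needed: one takes $d = \rho$ and $\alpha_0 = 1$, using $|\rho(x_1, y) - \rho(x_2, y)| \le \rho(x_1, x_2)$ in place of (\ref{MS}). The main technical obstacle I anticipate is item (a)---the H\"older estimate for the distance-to-set function under a quasidistance of order $\alpha_0$---because the right-hand side of (\ref{MS}) depends on $y$ and one must exploit the boundedness of $\Omega_n$ to pass to the infimum cleanly. Once that is settled the remainder of the argument is routine.
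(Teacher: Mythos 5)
Your argument is correct and fills in precisely the details the paper omits: the paper only sketches the idea (regularity of $\mu$, Mac\'{\i}as--Segovia's Theorem 2, and a ``fairly standard argument'' approximating $\chi_E$ by H\"older functions), and your three-step reduction with the Urysohn-type interpolant $f = d(\cdot,V)/(d(\cdot,K)+d(\cdot,V))$ is exactly that standard argument made explicit. One small imprecision: when $\Omega_n$ is not open you cannot in general arrange the open set $U$ to satisfy $U\subset\Omega_n$, but this is never actually used---taking $U$ open in $\Omega$ with $E\subset U$ and setting $V=\Omega_n\setminus U$ still gives $d(K,V)\geq d(K,\Omega\setminus U)>0$ by compactness of $K\subset U$, and $|f-\chi_E|\le\chi_{(U\cap\Omega_n)\setminus K}$ still controls the $L^p$ error.
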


We leave the details to the interested reader. Note that this is the only
point of the theory where we use the assumption of regularity of $\mu$;
moreover, this assumption could actually be removed. Namely, refining an
argument contained in \cite[Thm. 2.2.2]{F}, it is possible to prove that,
under our assumptions (H1), (H2), (H4), any positive Borel measure on $\Omega$
has the regularity property which is used in the proof of this proposition.

\section{Local singular and fractional integrals\label{section singular}}

We now want to develop a theory of singular and fractional integrals in
locally homogeneous space. We are interested in situations, which typically
occur when dealing with local a-priori estimates for subelliptic PDEs, where
one builds singular kernels $K\left(  x,y\right)  $ which are naturally
defined only locally, say for $x,y$ belonging to some ball $B\left(
\overline{x},R_{0}\right)  \subset\Omega_{n+1}$ with $\overline{x}\in
\Omega_{n}.$ Starting from this kernel, one builds a new one of the kind%
\[
\widetilde{K}\left(  x,y\right)  =a\left(  x\right)  K\left(  x,y\right)
b\left(  y\right)
\]
where $a,b$ are suitable cutoff functions both supported in $B\left(
\overline{x},R_{0}\right)  .$ This $\widetilde{K}$ has the better property of
being defined in the whole $\Omega_{n+1}\times\Omega_{n+1}$ (except the
diagonal $x=y$); the integral operator with kernel $\widetilde{K}$ can be the
right object to prove a local estimate, of $L^{p}\ $or $C^{\alpha}$ type. We
can use the H\"{o}lder continuous cutoff functions built in the previous
section to define a kernel $\widetilde{K}$ supported in $B\left(  \overline
{x},R\right)  \times B\left(  \overline{x},R\right)  ,$ and exploit the fact
that $B\left(  \overline{x},R\right)  $ is in turn essentially contained in a
space of homogeneous type (see Theorem \ref{Thm F}). Then, we would like to
apply to the singular or fractional integral defined by $\widetilde{K}$ some
existing results from the theory of spaces of homogeneous type. This requires
checking that $\widetilde{K}$ satisfies globally, in the space of homogeneous
type where we have embedded it, suitable properties: standard estimates,
cancellation properties and so on. The following preliminary construction and
results serve to this aim. Moreover, in view of the commutator theorems we are
going to prove, we want to further shrink the support of $\widetilde{K},$ if
necessary. This is the reason why we introduce a second variable radius
$R<R_{0}$. We keep assuming that $\left(  \Omega,\left\{  \Omega_{n}\right\}
_{n=1}^{\infty},\rho,\mu\right)  $ be a locally homogeneous space. Moreover,
we make the following:

\bigskip

\textbf{Assumption (H7). }For fixed $\Omega_{n},\Omega_{n+1},$ and a fixed
ball $B\left(  \overline{x},R_{0}\right)  ,$ with $\overline{x}\in\Omega_{n}$
and $R_{0}<2\varepsilon_{n}$ (hence $B\left(  \overline{x},R_{0}\right)
\subset\Omega_{n+1}$), let $K\left(  x,y\right)  $ be a measurable function
defined for $x,y\in B\left(  \overline{x},R_{0}\right)  $, $x\neq y$. Let
$R>0$ be any number satisfying%
\begin{equation}
cR\leq R_{0}\label{R R_0}%
\end{equation}
for some $c>1$ which will be chosen in the proof of the next Proposition; let
$a,b\in C_{0}^{\alpha}\left(  \Omega_{n+1}\right)  ,$ $B\left(  \overline
{x},c_{1}R\right)  \prec a\prec B\left(  \overline{x},c_{2}R\right)  ,$
$B\left(  \overline{x},c_{3}R\right)  \prec b\prec B\left(  \overline{x}%
,c_{4}R\right)  $ (see Proposition \ref{Prop cutoff}) for some fixed constants
$c_{i}\in\left(  0,1\right)  ,$ $i=1,...,4.$ The new kernel%
\begin{equation}
\widetilde{K}\left(  x,y\right)  =a\left(  x\right)  K\left(  x,y\right)
b\left(  y\right)  \label{k tilde}%
\end{equation}
can be considered defined in the whole $\Omega_{n+1}\times\Omega
_{n+1}\setminus\left\{  x=y\right\}  $. Then:

\begin{proposition}
\label{Prop check assumptions}Under assumption (H7) we have:

(i) Assume $K$ satisfies for some $\nu\in\lbrack0,1)$ the following
\emph{standard estimates}:%
\begin{equation}
\left\vert K\left(  x,y\right)  \right\vert \leq\frac{A\rho\left(  x,y\right)
^{\nu}}{\mu\left(  B\left(  x,\rho\left(  x,y\right)  \right)  \right)  }
\label{standard 1}%
\end{equation}
for $x,y\in B\left(  \overline{x},R_{0}\right)  ,$ $x\neq y,$ and%
\begin{equation}
\left\vert K(x_{0},y)-K(x,y)\right\vert +\left\vert K(y,x_{0}%
)-K(y,x)\right\vert \leq\frac{B\rho\left(  x_{0},y\right)  ^{\nu}}{\mu
(B(x_{0},\rho(x_{0},y)))}\left(  \frac{\rho(x_{0},x)}{\rho(x_{0},y)}\right)
^{\beta} \label{standard 2}%
\end{equation}
for any $x_{0},x,y\in B\left(  \overline{x},R_{0}\right)  $ with $\rho
(x_{0},y)>M\rho(x_{0},x)$, some $\beta>0,M>1$. ($M\geq2B_{n+1}$, so that
condition $\rho(x_{0},y)>M\rho(x_{0},x)$ implies the comparability of
$\rho(x_{0},y)$ and $\rho\left(  x,y\right)  $).

Then $\widetilde{K}$ satisfies the same bound (\ref{standard 1}) for any
$x,y\in\Omega_{n+1},x\neq y$ and a bound (\ref{standard 2}) (with a different
constant $B^{\prime}$) for any $x_{0},x,y\in\Omega_{n+1}$, with $\rho
(x_{0},y)>M\rho(x_{0},x),$ provided $\alpha\geq\beta$ (where $\alpha$ is the
H\"{o}lder exponent related to the cutoff functions defining $\widetilde{K}$);
the new constant $B^{\prime}$ depends on $A,B$ and $n$ (but not on $R$).

(ii) Assume $K$ satisfies (\ref{standard 1}) with $\nu=0$ and the following
\emph{cancellation property}:

there exists $C>0$ such that for a.e. $x\in B\left(  \overline{x}%
,R_{0}\right)  $ and every $\varepsilon_{1},\varepsilon_{2}$ such that
$0<\varepsilon_{1}<\varepsilon_{2}$ and $B_{\rho^{\prime}}\left(
x,\varepsilon_{2}\right)  \subset\Omega_{n+1}$%
\begin{equation}
\left\vert \int_{\Omega_{n+1},\varepsilon_{1}<\rho^{\prime}(x,y)<\varepsilon
_{2}}K(x,y)\,d\mu(y)\right\vert +\left\vert \int_{\Omega_{n+1},\varepsilon
_{1}<\rho^{\prime}(x,z)<\varepsilon_{2}}K(z,x)\,d\mu(z)\right\vert \leq C,
\label{standard 3}%
\end{equation}
where $\rho^{\prime}$ is any quasidistance equivalent to $\rho$ in
$\Omega_{n+1}$ and $B_{\rho^{\prime}}$ denotes $\rho^{\prime}$-balls.

Then $\widetilde{K}$ satisfies a similar cancellation property (with a
different constant $C^{\prime}$) for a.e. $x\in\Omega_{n+1},0<\varepsilon
_{1}<\varepsilon_{2}<\infty.$ The new constant $C^{\prime}$ depends on $A,C$
and $n$ (but not on $R$).

The same is true if, in the condition (\ref{standard 3}), we replace the
integration over $\Omega_{n+1}$ with the integration over any measurable set
containing $B\left(  \overline{x},R\right)  .$

(iii) Assume $K$ satisfies the bound (i) and the following \emph{convergence
condition}: for a.e. $x\in B\left(  \overline{x},R_{0}\right)  $ such that
$B_{\rho^{\prime}}\left(  x,R\right)  \subset\Omega_{n+1}$ there exists%
\[
h_{R}\left(  x\right)  \equiv\lim_{\varepsilon\rightarrow0}\int_{\Omega
_{n+1},\varepsilon<\rho^{\prime}(x,y)<R}K(x,y)d\mu(y),
\]
where $\rho^{\prime}$ is any quasidistance equivalent to $\rho$ in
$\Omega_{n+1}$.

Then for a.e. $x\in\Omega_{n+1}$, there exists%
\[
\widetilde{h}\left(  x\right)  \equiv\lim_{\varepsilon\rightarrow0}%
\int_{\Omega_{n+1},\rho^{\prime}(x,y)>\varepsilon}\widetilde{K}(x,y)\,d\mu
(y).
\]

\end{proposition}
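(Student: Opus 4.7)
The plan for all three parts exploits the same decomposition philosophy: write $\widetilde{K}(x,y)$ as a ``main'' piece where the cutoffs are evaluated at a common point, plus a ``remainder'' that is paid for by the Hölder regularity of $a$ and $b$ supplied by Proposition \ref{Prop cutoff}. A first observation I would record at the start is that, since $a,b$ are supported in $B(\overline{x},c_{2}R)$ and $B(\overline{x},c_{4}R)$ respectively, whenever $\widetilde{K}(x,y)$ (or any of the differences we manipulate) is nonzero, all the points involved lie in $B(\overline{x},cR)$ for some $c$ depending on the $c_{i}$ and on $B_{n+1}$. Hence $\rho(x,y)\le c'R$, and by choosing the constant in \eqref{R R_0} large enough we can arrange that every relevant ball (including $B_{\rho'}(x,c'R)$) stays inside $\Omega_{n+1}$, so that the hypotheses on $K$ apply.

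For (i), the size bound is immediate from $|a|,|b|\le 1$. For the smoothness bound I would split
\[
\widetilde{K}(x_{0},y)-\widetilde{K}(x,y)=[a(x_{0})-a(x)]\,K(x_{0},y)\,b(y)+a(x)\,[K(x_{0},y)-K(x,y)]\,b(y),
\]
estimate the second summand using \eqref{standard 2} directly, and the first using $|a(x_{0})-a(x)|\le c(\rho(x_{0},x)/R)^{\alpha}$ combined with \eqref{standard 1}. The key manipulation is to convert the $R^{-\alpha}$ into the desired $\rho(x_{0},y)^{-\beta}$ using $\rho(x_{0},y)\le c'R$ (so that $1/R\le c/\rho(x_{0},y)$) and to absorb the exponent mismatch $\alpha-\beta\ge 0$ via $\rho(x_{0},x)<\rho(x_{0},y)/M<c'R/M$. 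The symmetric difference $K(y,x_{0})-K(y,x)$ is handled identically.

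For (ii), assuming $x\in\mathrm{supp}\,a$ (else the integral vanishes trivially), I would split through $b(y)=b(x)+[b(y)-b(x)]$ to write
\[
\int \widetilde{K}(x,y)\,d\mu(y)=a(x)b(x)\int K(x,y)\,d\mu(y)+a(x)\int K(x,y)[b(y)-b(x)]\,d\mu(y)
\]
over the shell $\varepsilon_{1}<\rho'(x,y)<\varepsilon_{2}$. For the first summand I would replace $\varepsilon_{2}$ by $\min(\varepsilon_{2},c'R)$ (which does not change the integral, the kernel being supported on $\rho(x,y)\le c'R$) and invoke \eqref{standard 3}, using the preliminary observation to guarantee $B_{\rho'}(x,\min(\varepsilon_{2},c'R))\subset\Omega_{n+1}$; this is exactly where the flexibility noted in the statement (integration over any measurable set containing $B(\overline{x},R)$) is used. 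For the second summand, the Hölder continuity of $b$ and \eqref{standard 1} with $\nu=0$ give an integrand bounded by $c\,\rho(x,y)^{\alpha}/(R^{\alpha}\mu(B(x,\rho(x,y))))$, and a dyadic-annulus decomposition combined with the local doubling condition (H6) produces a bound independent of $\varepsilon_{1},\varepsilon_{2}$. The $\int \widetilde{K}(z,x)\,d\mu(z)$ half is symmetric.

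For (iii), the same splitting shows that the ``remainder'' integrand $K(x,y)[b(y)-b(x)]$ is absolutely integrable near the diagonal, so its contribution has a limit as $\varepsilon\to 0$ by dominated convergence. For the ``main'' piece, I would further split the integration at $\rho'(x,y)=R$: on $\rho'(x,y)\ge R$ the integral is independent of $\varepsilon$ and finite, because the support of $\widetilde{K}(x,\cdot)$ has finite measure and $K$ satisfies the pointwise bound \eqref{standard 1}; on $\varepsilon<\rho'(x,y)<R$ it equals $a(x)b(x)$ times exactly the expression whose limit, by hypothesis, defines $h_{R}(x)$. The main obstacle throughout is not conceptual but one of careful bookkeeping: fixing, once and for all, the constant $c$ in \eqref{R R_0} large enough (in terms of $B_{n+1}$ and the $c_{i}$) so that every auxiliary ball used in the estimates remains inside $\Omega_{n+1}$, and then verifying that the final constants depend only on $n$, not on $R$, so that the bounds are uniform across all admissible radii.
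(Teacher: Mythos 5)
Your proposal is correct and follows essentially the same path as the paper's proof: the same splitting of $\widetilde K(x_0,y)-\widetilde K(x,y)$ into a term paid for by \eqref{standard 2} and one paid for by the H\"older continuity of $a$ (with the same conversion of $R^{-\alpha}$ into $\rho(x_0,y)^{-\beta}$ via $\rho(x_0,y)\lesssim R$ and the ratio being less than one), the same decomposition $b(y)=b(x)+[b(y)-b(x)]$ and dyadic-annulus estimate for (ii), and the same reduction of (iii) to the hypothesis plus an absolutely convergent remainder. The only cosmetic divergences are the pivot choice in the product rule for (i) and, in (iii), your direct split at $\rho'=R$ versus the paper's verification of the Cauchy criterion via $A(\varepsilon_1,\varepsilon_2,x)\to0$; also note that the remark about replacing $\Omega_{n+1}$ by another set in \eqref{standard 3} is an additional conclusion of the proposition rather than something needed in the basic proof, since $\widetilde K(x,\cdot)$ is supported in $B(\overline{x},c_4R)$ anyway.
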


\begin{remark}
The presence of a function $\rho^{\prime}$ possibly different from $\rho$ (but
equivalent to it) in conditions (ii)-(iii) adds flexibility to the theory: it
is sometimes easier to check these conditions for a $\rho^{\prime}$ different
from $\rho.$ For instance, when dealing with local estimates for operators
structured on H\"{o}rmander's vector fields, typically $\rho$ will be the
Carnot-Carath\'{e}odory distance induced by the vector fields, while
$\rho^{\prime}$ will be the quasidistance defined by Rothschild-Stein in
\cite{RS}.
\end{remark}

\begin{proof}
The first part of (i) is obvious. To prove the second part, let us write, for
$x_{0},x,y\in\Omega_{n+1}$, $\rho(x_{0},y)>M\rho(x_{0},x)$:%
\begin{align*}
\left\vert \widetilde{K}(x_{0},y)-\widetilde{K}(x,y)\right\vert  &
\leq\left\vert a\left(  x_{0}\right)  K\left(  x_{0},y\right)  b\left(
y\right)  -a\left(  x_{0}\right)  K\left(  x,y\right)  b\left(  y\right)
\right\vert \\
&  +\left\vert a\left(  x_{0}\right)  K\left(  x,y\right)  b\left(  y\right)
-a\left(  x\right)  K\left(  x,y\right)  b\left(  y\right)  \right\vert \\
&  =I+II.
\end{align*}
By (\ref{standard 2}),
\begin{equation}
I\leq\left\vert a\left(  x_{0}\right)  b\left(  y\right)  \right\vert
\frac{B\rho\left(  x_{0},y\right)  ^{\nu}}{\mu(B(x_{0},\rho(x_{0},y)))}\left(
\frac{\rho(x_{0},x)}{\rho(x_{0},y)}\right)  ^{\beta} \label{Standard I}%
\end{equation}
when $x_{0},x,y\in B\left(  \overline{x},R_{0}\right)  $. Since the quantity
$\left\vert a\left(  x_{0}\right)  b\left(  y\right)  \right\vert $ does not
vanish only if $x_{0},y\in B\left(  \overline{x},R\right)  $, it is enough to
consider what happens when $x_{0},y\in B\left(  \overline{x},R\right)  $ and
$x\notin B\left(  \overline{x},R\right)  .$ We have%
\begin{align*}
\rho\left(  x,\overline{x}\right)   &  \leq B_{n+1}\left(  \rho\left(
x,x_{0}\right)  +\rho\left(  x_{0},\overline{x}\right)  \right)  \leq
B_{n+1}\left(  \frac{1}{M}\rho\left(  x_{0},y\right)  +R\right) \\
&  \leq B_{n+1}\left(  \frac{1}{M}B_{n+1}\left(  \rho\left(  x_{0}%
,\overline{x}\right)  +\rho\left(  \overline{x},y\right)  \right)  +R\right)
\\
&  \leq B_{n+1}\left(  \frac{1}{M}B_{n+1}2R+R\right)  \leq2B_{n+1}R
\end{align*}
(by our assumption $M\geq2B_{n+1}$). Hence, if in Assumption (H7) we take
\begin{equation}
cR\leq R_{0}\text{ with }c>2B_{n+1}, \label{c H7}%
\end{equation}
we have $\rho\left(  x,\overline{x}\right)  \leq R_{0},$ and (\ref{Standard I}%
) holds for any $x_{0},x,y\in\Omega_{n+1}$ with $\rho(x_{0},y)>M\rho(x_{0},x)$.

Now,
\[
II=\left\vert a\left(  x_{0}\right)  -a\left(  x\right)  \right\vert
\left\vert K\left(  x,y\right)  b\left(  y\right)  \right\vert
\]
by Proposition \ref{Prop cutoff} and (\ref{standard 1}), for $x,y\in B\left(
\overline{x},R_{0}\right)  ,$%
\begin{align}
&  \leq c\left(  \frac{\rho(x_{0},x)}{R}\right)  ^{\alpha}\left\vert K\left(
x,y\right)  b\left(  y\right)  \right\vert \nonumber\\
&  \leq c\frac{B\rho\left(  x_{0},y\right)  ^{\nu}}{\mu(B(x_{0},\rho
(x_{0},y)))}\left(  \frac{\rho(x_{0},x)}{R}\right)  ^{\alpha} \label{rho/R}%
\end{align}
since $\rho\left(  x_{0},y\right)  $ is comparable to $\rho\left(  x,y\right)
.$

The term $II\ $does not vanish only if $y\in B\left(  \overline{x},R\right)  $
and $x$ or $x_{0}$ belongs to $B\left(  \overline{x},R\right)  .$

If $y,x_{0}\in B\left(  \overline{x},R\right)  $ then $\rho\left(
x_{0},y\right)  \leq2B_{n+1}R.$ On the other hand, condition $\rho
(x_{0},y)>M\rho(x_{0},x)$ with $M\geq2B_{n+1}$ implies
\[
\rho\left(  x_{0},y\right)  \leq2B_{n+1}\rho\left(  x,y\right)  .
\]
Hence if $y,x\in B\left(  \overline{x},R\right)  $ then $\rho\left(
x,y\right)  \leq2B_{n+1}R$ and $\rho\left(  x_{0},y\right)  \leq\left(
2B_{n+1}\right)  ^{2}R.$ So, in any case $\rho\left(  x_{0},y\right)  \leq
c_{1}R,$ and (\ref{rho/R}) gives%
\begin{align*}
II  &  \leq c\frac{B\rho\left(  x,y\right)  ^{\nu}}{\mu(B(x_{0},\rho
(x,y)))}\left(  \frac{\rho(x_{0},x)}{\rho(x_{0},y)}\right)  ^{\alpha}\\
&  \leq c\frac{B\rho\left(  x_{0},y\right)  ^{\nu}}{\mu(B(x_{0},\rho
(x_{0},y)))}\left(  \frac{\rho(x_{0},x)}{\rho(x_{0},y)}\right)  ^{\beta}%
\end{align*}
for any $\beta\leq\alpha,x,y\in B\left(  \overline{x},R_{0}\right)  .$ It is
now enough to check what happens for $y,x_{0}\in B\left(  \overline
{x},R\right)  $ and $x\notin B\left(  \overline{x},R_{0}\right)  .$ Reasoning
like above, the conditions $y,x_{0}\in B\left(  \overline{x},R\right)  $ imply
$\rho\left(  x,\overline{x}\right)  \leq2B_{n+1}R<R_{0}$ by (\ref{c H7}),
hence $x\notin B\left(  \overline{x},R_{0}\right)  $ simply cannot happen.

To prove (ii), let $x\in\Omega_{n+1}$ and consider, for any $\varepsilon
_{2}>\varepsilon_{1}>0,$%
\[
A\equiv\int_{\Omega_{n+1},\varepsilon_{1}<\rho^{\prime}(x,y)<\varepsilon_{2}%
}\widetilde{K}(x,y)\,d\mu(y)=a\left(  x\right)  \int_{\Omega_{n+1}%
,\varepsilon_{1}<\rho^{\prime}(x,y)<\varepsilon_{2}}K(x,y)b\left(  y\right)
\,d\mu(y).
\]
This quantity does not vanish only if $x\in B\left(  \overline{x},R\right)  ;$
since also $b\left(  y\right)  $ does not vanish only for $y\in B\left(
\overline{x},R\right)  ,$ the integrand does not vanish only if $\rho^{\prime
}\left(  x,y\right)  \leq c^{\prime}R$ for some $c^{\prime}$. Choose the
number $c$ in (\ref{R R_0}) so that $cR\leq R_{0}$ implies $c^{\prime}R\leq
R_{0}$ and $B\left(  x,c^{\prime}R\right)  \subset\Omega_{n+1}$. Then%
\begin{align*}
A &  =a\left(  x\right)  \int_{\Omega_{n+1},\varepsilon_{1}<\rho^{\prime
}(x,y)<\min\left(  \varepsilon_{2},c^{\prime}R\right)  }K(x,y)b\left(
y\right)  \,d\mu(y)\\
&  =a\left(  x\right)  b\left(  x\right)  \int_{\Omega_{n+1},\varepsilon
_{1}<\rho^{\prime}(x,y)<\min\left(  \varepsilon_{2},c^{\prime}R\right)
}K(x,y)\,d\mu(y)+\\
&  +\int_{\Omega_{n+1},\varepsilon_{1}<\rho^{\prime}(x,y)<\min\left(
\varepsilon_{2},c^{\prime}R\right)  }K(x,y)\,\left[  b\left(  y\right)
-b\left(  x\right)  \right]  d\mu(y)\equiv A_{1}+A_{2}.
\end{align*}
Now by (\ref{standard 3}) we can bound%
\[
\left\vert A_{1}\right\vert \leq C\left\vert a\left(  x\right)  b\left(
x\right)  \right\vert \leq C
\]
while%
\begin{align*}
\left\vert A_{2}\right\vert  &  \leq\int_{\Omega_{n+1},\varepsilon_{1}%
<\rho^{\prime}(x,y)<\min\left(  \varepsilon_{2},c^{\prime}R\right)
}\left\vert K(x,y)\right\vert c\left(  \frac{\rho\left(  x,y\right)  }%
{R}\right)  ^{\alpha}d\mu(y)\\
&  \leq\frac{c}{R^{\alpha}}\int_{\Omega,\rho(x,y)<c^{\prime\prime}R}%
\frac{A\rho\left(  x,y\right)  ^{\alpha}}{\mu\left(  B\left(  x,\rho\left(
x,y\right)  \right)  \right)  }d\mu\left(  y\right)  \leq\frac{c}{R^{\alpha}%
}c^{\prime\prime\prime}R^{\alpha}\equiv C^{\prime}.
\end{align*}
In the last inequality we have applied a standard estimate in spaces of
homogeneous type (since the integral is extended to a ball centered at a point
of $\Omega_{n+1}$ and contained in $\Omega_{n+2}$ we can apply the local
doubling condition):%
\begin{align*}
&  \int_{\Omega,\rho(x,y)<c^{\prime\prime}R}\frac{\rho\left(  x,y\right)
^{\alpha}}{\mu\left(  B\left(  x,\rho\left(  x,y\right)  \right)  \right)
}d\mu\left(  y\right)  \\
&  \leq\sum_{k=0}^{\infty}\int_{\Omega_{n+1},\frac{c^{\prime\prime}R}{2^{k+1}%
}\leq\rho(x,y)<\frac{c^{\prime\prime}R}{2^{k}}}\frac{\rho(x,y)^{\alpha}}%
{\mu\left(  B\left(  x,\rho\left(  x,y\right)  \right)  \right)  }d\mu(y)\\
&  \leq\underset{k=0}{\overset{\infty}{\sum}}\left(  \frac{c^{\prime\prime}%
R}{2^{k}}\right)  ^{\alpha}\frac{\mu\left(  B\left(  x,\frac{c^{\prime\prime
}R}{2^{k}}\right)  \right)  }{\mu\left(  B\left(  x,\frac{c^{\prime\prime}%
R}{2^{k+1}}\right)  \right)  }\leq\underset{k=0}{\overset{\infty}{\sum}%
}C_{n+1}\left(  \frac{c^{\prime\prime}R}{2^{k}}\right)  ^{\alpha}%
=c^{\prime\prime\prime}R^{\alpha}.
\end{align*}

To prove (iii), let us consider, for $x\in\Omega_{n+1}$ and $0<\varepsilon
_{1}<\varepsilon_{2},$%
\begin{align*}
&  \int_{\Omega_{n+1},\rho^{\prime}(x,y)>\varepsilon_{2}}\widetilde
{K}(x,y)\,d\mu(y)-\int_{\Omega_{n+1},\rho^{\prime}(x,y)>\varepsilon_{1}%
}\widetilde{K}(x,y)\,d\mu(y)\\
&  =a\left(  x\right)  \int_{\Omega_{n+1},\varepsilon_{1}<\rho^{\prime
}(x,y)\leq\varepsilon_{2}}K(x,y)b\left(  y\right)  d\mu(y)\equiv A\left(
\varepsilon_{1},\varepsilon_{2},x\right)  .
\end{align*}
The quantity $A\left(  \varepsilon_{1},\varepsilon_{2},x\right)  $ does not
vanish only if $x\in B\left(  \overline{x},R\right)  ;$ for this $x$ and $R$
small enough we have $B\left(  x,R\right)  \subset\Omega_{n+1}$, hence we can
write%
\begin{align*}
A\left(  \varepsilon_{1},\varepsilon_{2},x\right)   &  =a\left(  x\right)
b\left(  x\right)  \int_{\Omega_{n+1},\varepsilon_{1}<\rho^{\prime}%
(x,y)\leq\varepsilon_{2}}K(x,y)d\mu(y)\\
&  +a\left(  x\right)  \int_{\Omega_{n+1},\varepsilon_{1}<\rho^{\prime
}(x,y)\leq\varepsilon_{2}}K(x,y)\left[  b\left(  y\right)  -b\left(  x\right)
\right]  d\mu(y)\\
&  \equiv A_{1}\left(  \varepsilon_{1},\varepsilon_{2},x\right)  +A_{2}\left(
\varepsilon_{1},\varepsilon_{2},x\right)
\end{align*}
and, by our assumption on $K,$%
\[
\lim_{\varepsilon_{1},\varepsilon_{2}\rightarrow0}A_{1}\left(  \varepsilon
_{1},\varepsilon_{2},x\right)  =0.
\]
On the other hand, reasoning as above, we get%
\[
\left\vert A_{2}\left(  \varepsilon_{1},\varepsilon_{2},x\right)  \right\vert
\leq\frac{c}{R^{\alpha}}\int_{\Omega,\rho(x,y)<c^{\prime\prime}\varepsilon
_{2}}\frac{A\rho\left(  x,y\right)  ^{\alpha}}{\mu\left(  B\left(
x,\rho\left(  x,y\right)  \right)  \right)  }d\mu\left(  y\right)  \leq
c\left(  \frac{\varepsilon_{2}}{R}\right)  ^{\alpha}%
\]
which also vanishes for $\varepsilon_{2}\rightarrow0.$ So the desired limit exists.
\end{proof}

\begin{theorem}
[$L^{p}$ and $C^{\eta}$ estimates for singular integrals]%
\label{Theorem L^p C^eta}Let $K,\widetilde{K}$ be as in Assumption (H7), with
$K$ satisfying the standard estimates (i) with $\nu=0,$ the cancellation
property (ii) and the convergence condition (iii) stated in Proposition
\ref{Prop check assumptions}. If%
\[
Tf\left(  x\right)  =\lim_{\varepsilon\rightarrow0}\int_{B\left(  \overline
{x},R\right)  ,\rho^{\prime}(x,y)>\varepsilon}\widetilde{K}(x,y)f\left(
y\right)  d\mu(y),
\]
then for any $p\in\left(  1,\infty\right)  $%
\[
\left\Vert Tf\right\Vert _{L^{p}\left(  B\left(  \overline{x},R\right)
\right)  }\leq c\left\Vert f\right\Vert _{L^{p}\left(  B\left(  \overline
{x},R\right)  \right)  }.
\]
The constant $c$ depends on $p,n$ and the constants of $K$ involved in the
assumptions (but not on $R$).

Moreover, $T$ satisfies a weak 1-1 estimate:%
\[
\mu\left(  \left\{  x\in B\left(  \overline{x},R\right)  :\left\vert Tf\left(
x\right)  \right\vert >t\right\}  \right)  \leq\frac{c}{t}\left\Vert
f\right\Vert _{L^{1}\left(  B\left(  \overline{x},R\right)  \right)  }\text{
for any }t>0.
\]

Assume that, in addition, the kernel $K$ satisfies the condition
\begin{equation}
\widetilde{h}\left(  x\right)  \equiv\lim_{\varepsilon\rightarrow0}\int
_{\rho^{\prime}(x,y)>\varepsilon}\widetilde{K}(x,y)d\mu(y)\in C^{\gamma
}\left(  \Omega_{n+1}\right)  \label{h tilde C^gamma}%
\end{equation}
for some $\gamma>0$ (where $\rho^{\prime}$ is the same appearing in the
assumed convergence condition (iii)). Then%
\begin{equation}
\left\Vert Tf\right\Vert _{C^{\eta}\left(  B\left(  \overline{x},R\right)
\right)  }\leq c\left\Vert f\right\Vert _{C^{\eta}\left(  B\left(
\overline{x},HR\right)  \right)  } \label{C_eta}%
\end{equation}
for any positive $\eta<\min\left(  \alpha,\beta,\gamma\right)  $ and some
constant $H>1$ independent of $R$. (Recall that $\alpha$ is the H\"{o}lder
exponent related to the cutoff functions defining $\widetilde{K},\beta$
appears in the standard estimates (i) and $\gamma$ is the number in
(\ref{h tilde C^gamma})).

The constant $c$ depends on $\eta,n,R,$ the constants involved in the
assumptions on $K,$ and the $C^{\gamma}$ norm of $\widetilde{h}.$
\end{theorem}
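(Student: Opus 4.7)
The plan is to reduce the claim to the classical theory of Calder\'on--Zygmund singular integrals on spaces of homogeneous type, applied to an auxiliary set $F$ essentially containing the support of $\widetilde{K}$. First I would fix $\overline{x}\in\Omega_{n}$ and invoke Theorem~\ref{Thm F} to produce (shrinking $R_{n}$ if necessary so that $R_{0}$ in Assumption (H7) is respected) an open set $F$ with $B(\overline{x},R)\setminus E\subset F\subset\Omega_{n+2}$, such that $F$ is a space of homogeneous type with doubling constant depending only on $n$; by taking $R_{n}$ small enough, $F$ also contains the supports of the cutoffs $a,b$ up to a null set. By Proposition~\ref{Prop check assumptions}, the extended kernel $\widetilde{K}$, viewed on $F$, enjoys the standard estimates, the cancellation property and the convergence condition \emph{globally} on $F\times F\setminus\{x=y\}$, with constants independent of $R$.

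Next I would apply the standard $L^{p}$-continuity and weak $(1,1)$ theorems for singular integrals on spaces of homogeneous type (recalled in the Appendix) to the operator $\widetilde{T}$ defined on $F$ by the principal value of $\widetilde{K}$. Because $\widetilde{K}$ vanishes outside $B(\overline{x},c_{2}R)\times B(\overline{x},c_{4}R)$, for $x\in B(\overline{x},R)$ the value $\widetilde{T}f(x)$ depends only on $f$ restricted to $B(\overline{x},R)$ (up to a null set) and coincides with $Tf(x)$; so the $L^{p}$ and weak $(1,1)$ bounds for $\widetilde{T}$ on $F$ transfer at once to the analogous bounds for $T$ on $B(\overline{x},R)$. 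The $R$-uniformity of the resulting constants rests entirely on the two inputs provided by Theorem~\ref{Thm F}(i) and Proposition~\ref{Prop check assumptions}.

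For the $C^{\eta}$ estimate I would start from the identity
\[
Tf(x)\;=\;\int\widetilde{K}(x,y)\bigl[f(y)-f(x)\bigr]\,d\mu(y)\;+\;f(x)\,\widetilde{h}(x),\qquad x\in B(\overline{x},R),
\]
valid in the principal-value sense. The integral term is estimated by combining the $\beta$-H\"older regularity of $\widetilde{K}$ in the first argument (Proposition~\ref{Prop check assumptions}(i)) with the $\eta$-H\"older regularity of $f$ and the standard size bound on $\widetilde{K}$, exactly as in the classical H\"older theorem for singular integrals on spaces of homogeneous type stated in the Appendix; the pointwise factor $\widetilde{h}(x)$ is H\"older by the assumption (\ref{h tilde C^gamma}). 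The condition $\eta<\min(\alpha,\beta,\gamma)$ makes all three H\"older exponents compatible. The enlargement $B(\overline{x},HR)$ appears because the $y$-support of $\widetilde{K}(x,\cdot)$ for $x$ near $\partial B(\overline{x},R)$ may, after one application of the quasi-triangle inequality in $\Omega_{n+2}$, protrude slightly beyond $B(\overline{x},R)$, so the H\"older norm of $f$ must be controlled on such a slightly enlarged ball.

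The main obstacle throughout, and the one that drives the whole architecture of Sections~\ref{section abstract}--\ref{Section Dyadic}, is the bookkeeping needed to guarantee that \emph{every} constant produced above is independent of $R$: this is exactly what is provided by the uniform doubling constant in Theorem~\ref{Thm F}(i) and by the $R$-independence clauses in Proposition~\ref{Prop check assumptions}, and it is precisely the property that the applications to commutators with $VMO$ coefficients in Section~\ref{Section Commutator} will exploit.
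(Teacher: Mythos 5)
Your proposal follows essentially the same strategy as the paper: invoke Theorem~\ref{Thm F} to produce the auxiliary space of homogeneous type $F$ with $R$-independent doubling constant, use Proposition~\ref{Prop check assumptions} to show $\widetilde{K}$ satisfies the required hypotheses globally, apply the Appendix theorems (Theorems~\ref{sin lp}, \ref{sin holder}) on $F$, and transfer back to $B(\overline{x},R)$ via the inclusions $B(\overline{x},R)\setminus E\subset F$ and $B(\overline{x},R)\subset\overline{F}$. The $L^{p}$ and weak $(1,1)$ parts are identical to the paper's argument.

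One remark on the $C^{\eta}$ part. Your heuristic for the appearance of the enlarged ball $B(\overline{x},HR)$ is incorrect: the $y$-support of $\widetilde{K}(x,\cdot)$ is contained in $B(\overline{x},c_{4}R)$ with $c_{4}<1$ by construction (Assumption (H7)), so it cannot ``protrude beyond $B(\overline{x},R)$.'' The actual reason is the one the paper gives: Theorem~\ref{sin holder} applied on $F$ yields $\left\Vert Tf\right\Vert_{C^{\eta}(F)}\leq c\left\Vert f\right\Vert_{C^{\eta}(F)}$, and since H\"older-continuous functions cannot be extended by zero (unlike the $L^{p}$ case), the right-hand side cannot be shrunk from $F$ down to $B(\overline{x},R)$; one can only enlarge it to $B(\overline{x},HR)$ via the containment $F\subset B(\overline{x},HR)$ established in the proof of Theorem~\ref{Thm F}(iv). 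This is a minor expository slip rather than a gap in the argument, since you do ultimately cite the Appendix theorem as a black box and the sketched decomposition $Tf(x)=\int\widetilde{K}(x,y)[f(y)-f(x)]\,d\mu(y)+f(x)\widetilde{h}(x)$ is just a gloss of its internal mechanism.
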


\begin{proof}
By Theorem \ref{Thm F} there exists a space of homogeneous type $F$ such that%
\[
B\left(  \overline{x},R\right)  \setminus E\subset F\subset\Omega_{n+1}%
\]
and the doubling constant of $F$ only depends on $n.$ By our assumptions on
$K$ and Proposition \ref{Prop check assumptions}, the operator $T$ satisfies
all the assumptions of Theorem \ref{sin lp} (about singular integrals in
spaces of homogeneous type), so that
\[
\left\Vert Tf\right\Vert _{L^{p}\left(  B\left(  \overline{x},R\right)
\right)  }\leq\left\Vert Tf\right\Vert _{L^{p}\left(  F\right)  }\leq
c\left\Vert f\right\Vert _{L^{p}\left(  F\right)  }%
\]
with $c$ depending on $p,n$ and the constants involved in the assumptions
about $K$. Applying the inequality to $f\in L^{p}\left(  B\left(  \overline
{x},R\right)  \right)  $ (having set $f=0$ outside this ball), we get%
\[
\left\Vert Tf\right\Vert _{L^{p}\left(  B\left(  \overline{x},R\right)
\right)  }\leq c\left\Vert f\right\Vert _{L^{p}\left(  B\left(  \overline
{x},R\right)  \right)  }.
\]
The same argument gives the weak 1-1 estimate for $T$.

For the $C^{\eta}$ case a similar argument applies; we now apply Theorem
\ref{sin holder} and get%
\[
\left\Vert Tf\right\Vert _{C^{\eta}\left(  F\right)  }\leq c\left\Vert
f\right\Vert _{C^{\eta}\left(  F\right)  }%
\]
with $c$ depending on $\eta,n,$ the constants involved in the assumptions
about $K,$ the $C^{\gamma}$ norm of $\widetilde{h},$ and also diam$F$, that is
$R$. Moreover, $B\left(  \overline{x},R\right)  \subset\overline{F}$ hence
\[
\left\Vert Tf\right\Vert _{C^{\eta}\left(  B\left(  \overline{x},R\right)
\right)  }\leq\left\Vert Tf\right\Vert _{C^{\eta}\left(  \overline{F}\right)
}=\left\Vert Tf\right\Vert _{C^{\eta}\left(  F\right)  }\leq c\left\Vert
f\right\Vert _{C^{\eta}\left(  F\right)  }.
\]
A difference with the $L^{p}$ case is that now we cannot set $f=0$ outside the
ball $B\left(  \overline{x},R\right)  $ preserving its H\"{o}lder continuity,
therefore we can just write%
\[
\left\Vert Tf\right\Vert _{C^{\eta}\left(  B\left(  \overline{x},R\right)
\right)  }\leq c\left\Vert f\right\Vert _{C^{\eta}\left(  B\left(
\overline{x},HR\right)  \right)  }%
\]
since, for some $H>1$ independent of $R$, we have $F\subset B\left(
\overline{x},HR\right)  $, as seen in the proof of Theorem \ref{Thm F}.
\end{proof}

\begin{remark}
[Estimates for $C_{0}^{\eta}$ functions]\label{remark local holder}In the
applications of this theory to local a priori estimates for PDEs, the function
$f$ is usually compactly supported, so that we can apply (\ref{C_eta}) to
$f\in C_{0}^{\eta}\left(  B\left(  \overline{x},R\right)  \right)  ,$ getting
the more appealing inequality%
\[
\left\Vert Tf\right\Vert _{C^{\eta}\left(  B\left(  \overline{x},R\right)
\right)  }\leq c\left\Vert f\right\Vert _{C^{\eta}\left(  B\left(
\overline{x},R\right)  \right)  }.
\]
Moreover, applying this inequality to functions $f\in C_{0}^{\eta}\left(
B\left(  \overline{x},r\right)  \right)  $ with $r<R$ we can get a a bound%
\[
\left\Vert Tf\right\Vert _{C^{\eta}\left(  B\left(  \overline{x},r\right)
\right)  }\leq c\left\Vert f\right\Vert _{C^{\eta}\left(  B\left(
\overline{x},r\right)  \right)  }%
\]
with $c$ depending on $R$ but not on $r$.
\end{remark}

\begin{remark}
[Checking assumption (\ref{h tilde C^gamma}) ]Assumption
(\ref{h tilde C^gamma}) can be the most troublesome to check in concrete
applications (apart from classical cases in which $\widetilde{h}$ is zero, or
is constant). In some applications to subelliptic equations, the kernel
$\widetilde{K}(x,y)$ happens to be a perturbation of a simpler kernel which
has vanishing integral over spherical shells; in these cases, one can prove
that the limit%
\[
\lim_{\varepsilon\rightarrow0}\int_{\rho^{\prime}(x,y)>\varepsilon}%
\widetilde{K}(x,y)d\mu(y)
\]
equals to the integral of a nonsingular kernel satisfying standard estimates
(\ref{standard 1})-(\ref{standard 2}) for some $\nu>0.$ It is then helpful to
recall that such an integral \emph{always }belongs to a H\"{o}lder space, as
will follow from Theorem \ref{Thm frac C^eta}, since it can be regarded as
$T\left(  1\right)  $, where the constant $1$ is H\"{o}lder continuous and $T$
is a fractional integral..
\end{remark}

\begin{theorem}
[$L^{p}-L^{q}$ estimate for fractional integrals]\label{frac lp-lq}Let
$K,\widetilde{K}$ be as in Assumption (H7), with $K$ satisfying the growth
condition
\begin{equation}
0\leq K\left(  x,y\right)  \leq\frac{c}{\mu\left(  B\left(  x,\rho\left(
x,y\right)  \right)  \right)  ^{1-\nu}} \label{Standard 1'}%
\end{equation}
for some $\nu\in\left(  0,1\right)  ,$ $c>0,$ any $x,y\in B\left(
\overline{x},R_{0}\right)  ,$ $x\neq y.$ If%
\[
I_{\nu}f\left(  x\right)  =\int_{B\left(  \overline{x},R\right)  }%
\widetilde{K}(x,y)f\left(  y\right)  d\mu(y)
\]
then, for any $p\in\left(  1,\frac{1}{\nu}\right)  ,\frac{1}{q}=\frac{1}%
{p}-\nu$ there exists $c\ $such that%
\[
\left\Vert I_{\nu}f\right\Vert _{L^{q}\left(  B\left(  \overline{x},R\right)
\right)  }\leq c\left\Vert f\right\Vert _{L^{p}\left(  B\left(  \overline
{x},R\right)  \right)  }%
\]
for any $f\in L^{p}\left(  B\left(  \overline{x},R\right)  \right)  .$The
constant $c$ depends on $p,n,$ and the constants of $K$ involved in the
assumptions (but not on $R$).
\end{theorem}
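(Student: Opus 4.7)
The plan is to reduce the statement to a known $L^{p}$–$L^{q}$ estimate for fractional integrals in a space of homogeneous type, exactly in the spirit of the proof of Theorem \ref{Theorem L^p C^eta}, but now the argument is substantially simpler because there is no cancellation condition, no convergence issue and no singular principal value to worry about.

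First I would verify that the modified kernel $\widetilde{K}(x,y)=a(x)K(x,y)b(y)$ inherits the growth bound (\ref{Standard 1'}) globally on $\Omega_{n+1}\times\Omega_{n+1}$ (off the diagonal). This is the fractional analogue of part (i) of Proposition \ref{Prop check assumptions} and is immediate: the cutoff functions $a,b$ satisfy $0\le a,b\le 1$ and vanish outside $B(\overline{x},c_2 R),B(\overline{x},c_4 R)\subset B(\overline{x},R_0)$ respectively, so the bound (\ref{Standard 1'}) for $K$ on $B(\overline{x},R_0)\times B(\overline{x},R_0)$ transfers to the same bound for $\widetilde{K}$ on $\Omega_{n+1}\times\Omega_{n+1}$, with the same constant.

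Next I would apply Theorem \ref{Thm F} to produce, for $R\le R_n$, an open set $F$ which is a space of homogeneous type (with doubling constant depending on $n$ but not on $R$), such that $B(\overline{x},R)\setminus E\subset F\subset\Omega_{n+2}$. Since $\widetilde{K}$ satisfies (\ref{Standard 1'}) on $F\times F$, the classical $L^{p}$–$L^{q}$ boundedness of fractional integrals in spaces of homogeneous type (which I would invoke from the Appendix in the same way Theorem \ref{sin lp} was used) yields
\[
\|I_\nu f\|_{L^q(F)}\le c\|f\|_{L^p(F)}
\]
for $p\in(1,1/\nu)$ and $1/q=1/p-\nu$, with $c$ depending on $p,\nu,n$ and the constant in (\ref{Standard 1'}) but not on $R$, since the doubling constant of $F$ is $R$-independent.

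Finally, given $f\in L^p(B(\overline{x},R))$ I would extend $f$ by zero outside $B(\overline{x},R)$ (so that $f\in L^p(F)$, since $B(\overline{x},R)\setminus E\subset F$ and $E$ has measure zero); note that for $x\in B(\overline{x},R)$ the operator $I_\nu f(x)$ is unchanged by this extension because $\widetilde{K}(x,\cdot)$ is supported in $B(\overline{x},c_4R)\subset B(\overline{x},R_0)$ and, more to the point, the formula defining $I_\nu f$ only integrates over $B(\overline{x},R)$. Restricting the $L^q$ estimate on $F$ to the subset $B(\overline{x},R)\setminus E$ (which carries the full $L^q$ norm over $B(\overline{x},R)$, since $E$ has measure zero) gives the desired inequality. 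There is no serious obstacle here: the only mild technical point is to trace through that the constant produced by the homogeneous-space result is controlled purely by $n$ and the constants in the assumptions on $K$, which is guaranteed by the $R$-independence built into Theorem \ref{Thm F}.
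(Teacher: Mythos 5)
Your proposal is correct and takes essentially the same route as the paper, which simply states that the result follows from Theorem \ref{frac lp} by the same argument used for Theorem \ref{Theorem L^p C^eta}; you have spelled out the transfer of the growth bound to $\widetilde{K}$, the passage to the homogeneous space $F$ from Theorem \ref{Thm F}, and the zero-extension/restriction step, all of which match the intended argument. One small point worth noting explicitly (and which you could add for completeness) is that the Gatto--Vagi bound on $F$ requires the kernel to be dominated by $\mu(B(x,\rho(x,y))\cap F)^{-(1-\nu)}$, with balls relative to $F$; this follows from the bound you verified since $\mu(B\cap F)\le\mu(B)$ and so $\mu(B)^{-(1-\nu)}\le\mu(B\cap F)^{-(1-\nu)}$.
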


\begin{proof}
This theorem follows from the analog result which holds in spaces of
homogeneous type, that is Theorem \ref{frac lp}, by a similar argument to that
used in the proof of Theorem \ref{Theorem L^p C^eta}.
\end{proof}

The analog $C^{\eta}$ estimate for fractional integrals is better stated under
slightly different assumptions on the kernel. In the applications of the
theory that we have in mind, where the measure of a ball is equivalent to a
fixed power of the radius, both the theorems will be applicable.

\begin{theorem}
[$C^{\eta}$ estimate for fractional integrals]\label{Thm frac C^eta}Let
$K,\widetilde{K}$ be as in Assumption (H7), with $K$ satisfying
(\ref{standard 1}) and (\ref{standard 2}) for some $\nu\in\left(  0,1\right)
,\beta>0$. If%
\[
I_{\nu}f\left(  x\right)  =\int_{B\left(  \overline{x},R\right)  }%
\widetilde{K}(x,y)f\left(  y\right)  d\mu(y),
\]
then, for any $\eta<\min\left(  \alpha,\beta\,,\nu\right)  $%
\[
\left\Vert I_{\nu}f\right\Vert _{C^{\eta}\left(  B\left(  \overline
{x},R\right)  \right)  }\leq c\left\Vert f\right\Vert _{C^{\eta}\left(
B\left(  \overline{x},HR\right)  \right)  }.
\]
The constant $c$ depends on $\eta,n,R$ and the constants of $K$ involved in
the assumptions; the number $H$ only depends on $n$.
\end{theorem}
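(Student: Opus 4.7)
The plan is to mirror closely the proof of Theorem \ref{Theorem L^p C^eta}, transferring a known $C^{\eta}$ estimate for fractional integrals from a globally homogeneous model space $F$ back to the local ball $B(\overline{x},R)$. First, for $R\leq R_{n}$ apply Theorem \ref{Thm F} to produce an open set $F$ which is a space of homogeneous type with doubling constant depending only on $n$, satisfying $B(\overline{x},R)\setminus E\subset F\subset\Omega_{n+2}$ with $\mu(E)=0$, $B(\overline{x},R)\subset\overline{F}$, and $F\subset B(\overline{x},HR)$ for some $H>1$ depending only on $n$ (as seen in the proof of Theorem \ref{Thm F}).

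Next, use Proposition \ref{Prop check assumptions}(i) to upgrade the local standard estimates (\ref{standard 1}) and (\ref{standard 2}) for $K$ on $B(\overline{x},R_{0})$ to global standard estimates for $\widetilde{K}$ on $\Omega_{n+1}\times\Omega_{n+1}$, with the same exponent $\nu$ and some $\beta'\leq\min(\beta,\alpha)$, and constants independent of $R$. In particular these estimates are valid on $F\times F$. Since $\widetilde{K}(x,\cdot)$ is supported in $B(\overline{x},c_{4}R)\subset F\cup E$, one may harmlessly rewrite
\[
I_{\nu}f(x)=\int_{F}\widetilde{K}(x,y)f(y)\,d\mu(y)\qquad(x\in F),
\]
so that $I_{\nu}$ becomes a fractional integral operator on the space of homogeneous type $F$ in the classical sense.

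Then apply the analog $C^{\eta}$ estimate for fractional integrals in a (bounded) space of homogeneous type recalled in the Appendix — the fractional counterpart of the $C^{\eta}$-estimate for singular integrals used in the proof of Theorem \ref{Theorem L^p C^eta} — to obtain
\[
\|I_{\nu}f\|_{C^{\eta}(F)}\leq c\|f\|_{C^{\eta}(F)}\quad\text{for }\eta<\min(\alpha,\beta,\nu),
\]
with $c$ depending on $\eta$, $n$, the constants of $K$ involved in the assumptions, and on $\mathrm{diam}\,F$, hence on $R$. To conclude, transfer both sides as in Theorem \ref{Theorem L^p C^eta}: by Proposition \ref{Holder negligible} together with $B(\overline{x},R)\subset\overline{F}$, the left-hand side controls $\|I_{\nu}f\|_{C^{\eta}(B(\overline{x},R))}$; by the inclusion $F\subset B(\overline{x},HR)$, the right-hand side is dominated by $\|f\|_{C^{\eta}(B(\overline{x},HR))}$, yielding the stated inequality.

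The main delicacy is on the right-hand side: unlike the $L^{p}$ setting of Theorem \ref{frac lp-lq}, where one extends $f$ by zero outside $B(\overline{x},R)$ without increasing the norm, a $C^{\eta}$ function cannot be so extended while preserving its H\"older norm, which is precisely why $B(\overline{x},HR)$ rather than $B(\overline{x},R)$ appears on the right. A small companion verification is that the H\"older exponent $\alpha$ of the cutoff functions produced by Proposition \ref{Prop cutoff} can be chosen large enough that the prescribed threshold $\min(\alpha,\beta,\nu)$ is not artificially reduced; this is available because $\alpha$ is free up to the intrinsic bound $\alpha_{0}$ of $\Omega_{n+1}$, and for any prescribed $\beta,\nu$ one simply takes $\alpha$ close to $\alpha_{0}$.
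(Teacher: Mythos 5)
Your proposal is correct and follows essentially the same route as the paper: apply Theorem \ref{Thm F} to embed $B(\overline{x},R)$ into a space of homogeneous type $F\subset B(\overline{x},HR)$, upgrade the standard estimates for $\widetilde{K}$ via Proposition \ref{Prop check assumptions}(i), invoke the Appendix result Theorem \ref{Thm frac C^alfa} on $F$, and transfer back using $B(\overline{x},R)\subset\overline{F}$ (via Proposition \ref{Holder negligible}) on the left and $F\subset B(\overline{x},HR)$ on the right. This is precisely what the paper does by referring to the $C^{\eta}$ part of the proof of Theorem \ref{Theorem L^p C^eta}.
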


Reasoning as in Remark \ref{remark local holder}, we can also say that for
functions $f\in C_{0}^{\eta}\left(  B\left(  \overline{x},r\right)  \right)  $
with $r<R$ the following bound holds%
\[
\left\Vert I_{\nu}f\right\Vert _{C^{\eta}\left(  B\left(  \overline
{x},r\right)  \right)  }\leq c\left\Vert f\right\Vert _{C^{\eta}\left(
B\left(  \overline{x},r\right)  \right)  }%
\]
with $c$ depending on $R$ but not on $r$.

\begin{proof}
This theorem follows from Proposition \ref{Prop check assumptions} and the
analog result which holds in spaces of homogeneous type, that is Theorem
\ref{Thm frac C^alfa}, by an argument similar to that used in the proof of the
$C^{\eta}$ case in Theorem \ref{Theorem L^p C^eta}.
\end{proof}

\section{Local and global $BMO$ and $VMO$ spaces\label{Section BMO}}

Let $\left(  \Omega,\left\{  \Omega_{n}\right\}  _{n=1}^{\infty},\rho
,\mu\right)  $ be a locally homogeneous space.

\begin{definition}
[Local $BMO$ and $VMO$ spaces]For any function $u\in L^{1}\left(  \Omega
_{n+1}\right)  $, and $r>0$, with $r\leq\varepsilon_{n},$ set%
\[
\eta_{u,\Omega_{n},\Omega_{n+1}}^{\ast}(r)=\sup_{t\leq r}\sup_{x_{0}\in
\Omega_{n}}\frac{1}{\mu\left(  B\left(  x_{0},t\right)  \right)  }%
\int_{B\left(  x_{0},t\right)  }|u(x)-u_{B}|\,d\mu\left(  x\right)  ,
\]
where $u_{B}=\mu(B\left(  x_{0},t\right)  )^{-1}\int_{B\left(  x_{0},t\right)
}u$. We say that $u\in BMO_{loc}\left(  \Omega_{n},\Omega_{n+1}\right)  $ if%
\[
\left\Vert u\right\Vert _{BMO_{loc}\left(  \Omega_{n},\Omega_{n+1}\right)
}=\sup_{r\leq\varepsilon_{n}}\eta_{u,\Omega_{n},\Omega_{n+1}}^{\ast}\left(
r\right)  <\infty.
\]
We say that $u\in VMO_{loc}\left(  \Omega_{n},\Omega_{n+1}\right)  $ if $u\in
BMO_{loc}\left(  \Omega_{n},\Omega_{n+1}\right)  $ and
\[
\eta_{u,\Omega_{n},\Omega_{n+1}}^{\ast}(r)\rightarrow0\text{ as }%
r\rightarrow0.
\]
The function $\eta_{u,\Omega_{n},\Omega_{n+1}}^{\ast}$ will be called $VMO$
local modulus of $u$ in $\left(  \Omega_{n},\Omega_{n+1}\right)  $.
\end{definition}

Note that in the previous definition we integrate $u$ over balls centered at
points of $\Omega_{n}$ and enclosed in $\Omega_{n+1}$. This is a fairly
natural definition if we want to avoid integrating over the \emph{intersection
}$B\left(  x_{0},t\right)  \cap\Omega_{n}.$ We will need also the following standard

\begin{definition}
[$BMO$ and $VMO$ spaces over a homogeneous space]Let $S$ be a subset of
$\Omega$ which is a space of homogeneous type ($S$ can be a single cube
$Q_{\alpha}^{k},$ or the set $F$ built in Theorem \ref{Thm F}, or the whole $%
{\displaystyle\bigcup\limits_{\alpha\in I_{k}}}
Q_{\alpha}^{k}$). For any function $u\in L^{1}\left(  S\right)  $ and $r>0,$
set%
\[
\eta_{u,S}(r)=\sup_{t\leq r}\sup_{x_{0}\in S}\frac{1}{\mu\left(  B\left(
x_{0},t\right)  \cap S\right)  }\int_{B\left(  x_{0},t\right)  \cap
S}|u(x)-u_{B\cap S}|\,d\mu\left(  x\right)  ,
\]
where $u_{B\cap S}=\mu(B\left(  x_{0},t\right)  \cap S)^{-1}\int_{B\left(
x_{0},t\right)  \cap S}u$. We say that $u\in BMO\left(  S\right)  $ if%
\[
\left\Vert u\right\Vert _{BMO\left(  S\right)  }=\sup_{r>0}\eta_{u,S}\left(
r\right)  <\infty.
\]
We say that $u\in VMO\left(  S\right)  $ if $u\in BMO\left(  S\right)  $ and
$\eta_{u,S}(r)\rightarrow0$ as $r\rightarrow0$. The function $\eta_{u,S}$ will
be called $VMO$ modulus of $u$ in $S$.
\end{definition}

The useful link between the two notions of $BMO$ is contained in the next
Proposition. Here we have to consider families of dyadic cubes adapted to
different sets $\Omega_{n},$ hence we need to add an extra index to our symbols.

\begin{proposition}
\label{Prop BMO loc BMO}For fixed $n$ and $\overline{x}\in\Omega_{n},$ let
$B\left(  \overline{x},R\right)  ,F$ be as in Theorem \ref{Thm F} (recall that
$F\subset\Omega_{n+2}$). Let $f\in BMO_{loc}\left(  \Omega_{n+2},\Omega
_{n+3}\right)  ,$ then
\begin{equation}
\left\Vert f\right\Vert _{BMO\left(  F\right)  }\leq c\eta_{f,\Omega
_{n+2},\Omega_{n+3}}^{\ast}\left(  cR\right)  \leq c\left\Vert f\right\Vert
_{BMO_{loc}\left(  \Omega_{n+2},\Omega_{n+3}\right)  } \label{BMO eta}%
\end{equation}
for a constant $c$ depending on $n$ but independent of $R$. In particular,
given $f\in VMO_{loc}\left(  \Omega_{n+2},\Omega_{n+3}\right)  ,$ the norm
$\left\Vert f\right\Vert _{BMO\left(  F\right)  }$ can be taken as small as we
want, for fixed $n$ and $R$ small enough.
\end{proposition}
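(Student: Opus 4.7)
The plan is to bound, for each $x_0 \in F$ and each $t > 0$, the local oscillation of $f$ over $B(x_0,t) \cap F$ by a constant times $\eta^{*}_{f,\Omega_{n+2},\Omega_{n+3}}(cR)$. First I would shrink $R_n$ from Theorem \ref{Thm F} further if necessary so that, for every $R \leq R_n$, $HR \leq \varepsilon_{n+2}$, where $H$ is the enlargement constant from the same theorem; this guarantees $F \subset B(\overline{x},HR) \subset \Omega_{n+3}$ and, via the quasitriangle inequality in $\Omega_{n+3}$, ensures $B(x_0,t) \subset \Omega_{n+3}$ for every $x_0 \in F$ and every $t$ of interest, so that the $BMO_{loc}$ estimate is applicable to balls centered on $F$.

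The key geometric step is the comparison $\mu(B(x_0,t)) \leq C\,\mu(B(x_0,t) \cap F)$ with $C$ depending only on $n$, for all $x_0 \in F$ and all $t \leq cR$. For $x_0 \in F \setminus E$, $x_0$ lies in some dyadic cube $Q = Q^{(n+1),k}_{\beta} \subset F$ at the scale $k$ fixed by the construction of $F$ (where $\delta_{(n)}^{k+1} < R \leq \delta_{(n)}^{k}$). For $t \leq \delta^{k}$, Theorem \ref{Main Thm}(h) gives $\mu(B(x_0,t) \cap Q) \geq c_0\,\mu(B(x_0,t))$, hence the claim since $Q \subset F$. For $\delta^{k} < t \leq cR$ the ratio $t/\delta^{k}$ is bounded by a constant depending only on $n$, so the local doubling condition on $\Omega_{n+2}$ lets me pass from scale $\delta^{k}$ to scale $t$ losing only a constant, and the earlier inequality at scale $\delta^{k}$ upgrades to scale $t$. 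For $x_0 \in F \cap E$, pick a nearby $y \in F \setminus E$ and repeat the argument exactly as in Proposition \ref{Prop Q homogeneous}.

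With this in hand, for $t \leq cR$ the standard averaging inequality yields
\[
\frac{1}{\mu(B(x_0,t) \cap F)} \int_{B(x_0,t) \cap F} |f - f_{B \cap F}|\,d\mu \;\leq\; \frac{2\mu(B(x_0,t))}{\mu(B(x_0,t) \cap F)}\,\eta^{*}_{f,\Omega_{n+2},\Omega_{n+3}}(t) \;\leq\; 2C\,\eta^{*}_{f,\Omega_{n+2},\Omega_{n+3}}(cR),
\]
using that $x_0 \in \Omega_{n+2}$ and $B(x_0,t) \subset \Omega_{n+3}$. For $t > cR$ (with $c$ chosen so that $\text{diam}\,F \leq cR$), $B(x_0,t) \cap F = F$, so it suffices to bound $\frac{1}{\mu(F)}\int_F |f - f_F|\,d\mu$. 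Theorem \ref{Thm F}(v) gives $\mu(F)$ comparable to $\mu(B(\overline{x},R))$, while $F \subset B(\overline{x},HR) \subset \Omega_{n+3}$; a short chaining through finitely many dilations, with multiplicity controlled only by $n$, compares $f_F$ with $f_{B(\overline{x},R)}$ at the cost of $C\,\eta^{*}_{f,\Omega_{n+2},\Omega_{n+3}}(cR)$. Combining the two ranges yields $\|f\|_{BMO(F)} \leq C\,\eta^{*}_{f,\Omega_{n+2},\Omega_{n+3}}(cR)$, which is also at most $C\,\|f\|_{BMO_{loc}(\Omega_{n+2},\Omega_{n+3})}$.

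The main obstacle is the geometric comparison: every constant must be independent of $R$. The dyadic scale $\delta^{k}$ moves with $R$, but the doubling constants coming from Theorem \ref{Main Thm} and from the locally homogeneous structure depend only on $n$, so all bounds are uniform in $R$. The $VMO$ conclusion then follows immediately, since for $f \in VMO_{loc}(\Omega_{n+2},\Omega_{n+3})$ we have $\eta^{*}_{f,\Omega_{n+2},\Omega_{n+3}}(cR) \to 0$ as $R \to 0$, making $\|f\|_{BMO(F)}$ arbitrarily small for $R$ small.
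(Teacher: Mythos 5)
Your proof is correct and takes essentially the same route as the paper's: at small scales it compares $\mu\left(B\left(x_0,t\right)\cap F\right)$ to $\mu\left(B\left(x_0,t\right)\right)$ via the first branch of (\ref{lower bounds cubes}), and at larger scales it reduces the oscillation over $B\left(x_0,t\right)\cap F$ to one over $F$, which Theorem \ref{Thm F} makes comparable to a ball $B\left(\overline{x},j_n R\right)$. The paper merges your middle and large ranges into the single case $r>\delta_{\left(n+1\right)}^{k}$ (using the second branch of (\ref{lower bounds cubes}) together with the comparability of $\mu\left(Q_{\beta}^{\left(n+1\right),k}\right)$ and $\mu\left(F\right)$ established in the proof of Theorem \ref{Thm F}) and sidesteps your chaining of averages by simply replacing the $BMO\left(F\right)$ mean with the free constant $c=f_{B\left(\overline{x},j_n R\right)}$ at the cost of a factor $2$, but these are presentational differences only.
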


\begin{proof}
The second inequality holds by definition, so let us prove the first. With the
notation used in the proof of Theorem \ref{Thm F}, let $x\in F,$ that is $x\in
Q_{\beta}^{\left(  n+1\right)  ,k}$ for some $Q_{\beta}^{\left(  n+1\right)
,k}$ intersecting $B\left(  z_{\alpha}^{\left(  n\right)  ,k},h_{n}%
\delta_{\left(  n\right)  }^{k}\right)  .$ In particular, $x\in\Omega_{n+2}.$
Recall that $\delta_{\left(  n\right)  }^{k+1}<R\leq\delta_{\left(  n\right)
}^{k}.$ We want to bound, for any $r>0,$%
\[
I\equiv\frac{1}{\mu\left(  B\left(  x,r\right)  \cap F\right)  }\int_{B\left(
x,r\right)  \cap F}\left\vert f\left(  y\right)  -c\right\vert d\mu\left(
y\right)
\]
with $c$ to be chosen later. Let us distinguish the cases:

(i) $r\leq\delta_{\left(  n+1\right)  }^{k}$. Then
\[
\mu\left(  B\left(  x,r\right)  \cap F\right)  \geq\mu\left(  B\left(
x,r\right)  \cap Q_{\beta}^{\left(  n+1\right)  ,k}\right)  \geq c_{0,\left(
n+1\right)  }\mu\left(  B\left(  x,r\right)  \right)
\]
(see (\ref{lower bounds cubes})), hence choosing $c=f_{B\left(  x,r\right)  }$%
\begin{align*}
I  &  \leq\frac{c}{\mu\left(  B\left(  x,r\right)  \right)  }\int_{B\left(
x,r\right)  }\left\vert f\left(  y\right)  -f_{B\left(  x,r\right)
}\right\vert d\mu\left(  y\right) \\
&  \leq c\eta_{f,\Omega_{n+2},\Omega_{n+3}}^{\ast}\left(  \delta_{\left(
n+1\right)  }^{k}\right)  \leq c\eta_{f,\Omega_{n+2},\Omega_{n+3}}^{\ast
}\left(  c_{n}R\right)
\end{align*}
since $\delta_{\left(  n+1\right)  }\leq\delta_{\left(  n\right)  }$ and
$\delta_{\left(  n\right)  }^{k+1}<R.$

(ii) $r>\delta_{\left(  n+1\right)  }^{k}.$ Then
\[
\mu\left(  B\left(  x,r\right)  \cap F\right)  \geq\mu\left(  B\left(
x,r\right)  \cap Q_{\beta}^{\left(  n+1\right)  ,k}\right)  \geq c_{0,\left(
n+1\right)  }\mu\left(  Q_{\beta}^{\left(  n+1\right)  ,k}\right)
\]
(see (\ref{lower bounds cubes})), which in turn is equivalent to $\mu\left(
F\right)  $ because the two sets have comparable diameters and the first is
contained in the second. Here we can apply the local doubling condition for
balls of radius $\simeq\delta_{\left(  n+1\right)  }^{k}$ centered in
$\Omega_{n+1}$ and contained in $\Omega_{n+2}.$ Hence%
\[
I\leq\frac{c}{\mu\left(  F\right)  }\int_{F}\left\vert f\left(  y\right)
-c\right\vert d\mu\left(  y\right)  .
\]
In turn (see the last part of the proof of Theorem \ref{Thm F}), $F\subset
B\left(  \overline{x},j_{n}R\right)  $ with $\mu\left(  F\right)  $ comparable
to $\mu\left(  B\left(  \overline{x},j_{n}R\right)  \right)  $, therefore
choosing $c=f_{B\left(  \overline{x},j_{n}R\right)  }$ we get%
\[
I\leq c\eta_{f,\Omega_{n+2},\Omega_{n+3}}^{\ast}\left(  c_{n}R\right)
\]
and we are done.
\end{proof}

\section{Commutators of local singular and fractional integrals with BMO
functions\label{Section Commutator}}

\begin{theorem}
[Commutators of local singular integrals]\label{Thm commutator}Let
$K,\widetilde{K}$ be as in Assumption (H7), with $K$ satisfying the standard
estimates (i) with $\nu=0,$ the cancellation property (ii) and the convergence
condition (iii) (see Proposition \ref{Prop check assumptions}). If%
\[
Tf\left(  x\right)  =\lim_{\varepsilon\rightarrow0}\int_{B\left(  \overline
{x},R\right)  ,\rho^{\prime}(x,y)>\varepsilon}\widetilde{K}(x,y)f\left(
y\right)  d\mu(y)
\]
and, for $a\in BMO_{loc}\left(  \Omega_{n+2},\Omega_{n+3}\right)  ,$ we set%
\[
C_{a}f\left(  x\right)  =T\left(  af\right)  \left(  x\right)  -a\left(
x\right)  Tf\left(  x\right)  ,
\]
then for any $p\in\left(  1,\infty\right)  $ there exists $c>0$ such that%
\[
\left\Vert C_{a}f\right\Vert _{L^{p}\left(  B\left(  \overline{x},R\right)
\right)  }\leq c\left\Vert a\right\Vert _{BMO_{loc}\left(  \Omega_{n+2}%
,\Omega_{n+3}\right)  }\left\Vert f\right\Vert _{L^{p}\left(  B\left(
\overline{x},R\right)  \right)  }.
\]

Moreover, if $a\in VMO_{loc}\left(  \Omega_{n+2},\Omega_{n+3}\right)  $ for
any $\varepsilon>0$ there exists $r>0$ such that for any $f\in L^{p}\left(
B\left(  \overline{x},r\right)  \right)  $ we have%
\[
\left\Vert C_{a}f\right\Vert _{L^{p}\left(  B\left(  \overline{x},r\right)
\right)  }\leq\varepsilon\left\Vert f\right\Vert _{L^{p}\left(  B\left(
\overline{x},r\right)  \right)  }.
\]
The constant $c$ depends on $p,n$ and the constants of $K$ involved in the
assumptions (but not on $R$); the constant $r$ also depends on the
$VMO_{loc}\left(  \Omega_{n+2},\Omega_{n+3}\right)  $ modulus of $a$.
\end{theorem}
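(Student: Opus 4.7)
The plan is to reduce the local commutator estimate, via the machinery of Sections \ref{section singular} and \ref{Section BMO}, to the classical commutator theorem of Coifman-Rochberg-Weiss on a genuine space of homogeneous type. First, I invoke Proposition \ref{Prop check assumptions} to upgrade the truncated kernel $\widetilde{K}$: although $K$ is only defined on $B(\overline{x},R_0)\times B(\overline{x},R_0)$, the cutoff construction guarantees that $\widetilde{K}$ satisfies the standard estimates, the cancellation property and the convergence condition globally on $\Omega_{n+1}\times\Omega_{n+1}$ off the diagonal, with constants depending on $n$ but not on $R$.

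Next, I apply Theorem \ref{Thm F} to obtain an open set $F$ with $B(\overline{x},R)\setminus E\subset F\subset\Omega_{n+2}$, which is itself a space of homogeneous type whose doubling constant depends only on $n$. The operator $T$ can then be viewed as a Calder\'on-Zygmund singular integral on $F$, and the commutator $C_a$ on $F$ falls within the scope of the standard commutator theorem in spaces of homogeneous type (to be cited from the Appendix). That result yields
\[
\|C_a f\|_{L^p(F)} \leq c\,\|a\|_{BMO(F)}\,\|f\|_{L^p(F)},
\]
with $c$ depending only on $p$, $n$, and the constants of $\widetilde{K}$, and in particular independent of $R$. Applying this estimate to functions $f\in L^p(B(\overline{x},R))$ extended by zero outside the ball, and using that $B(\overline{x},R)\setminus E\subset F$ with $\mu(E)=0$, reduces the first claim to controlling $\|a\|_{BMO(F)}$.

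This last step is exactly Proposition \ref{Prop BMO loc BMO}, applied with $a\in BMO_{loc}(\Omega_{n+2},\Omega_{n+3})$: it gives
\[
\|a\|_{BMO(F)} \leq c\,\eta_{a,\Omega_{n+2},\Omega_{n+3}}^{\ast}(cR) \leq c\,\|a\|_{BMO_{loc}(\Omega_{n+2},\Omega_{n+3})},
\]
with $c$ depending on $n$ only. Combining the last two displays yields the first assertion of the theorem.

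For the $VMO$ part, I run exactly the same argument, but with $R$ replaced by the free parameter $r$, and I stop at the sharper bound $\|a\|_{BMO(F)} \leq c\,\eta_{a,\Omega_{n+2},\Omega_{n+3}}^{\ast}(cr)$. Since $a\in VMO_{loc}$, the local modulus $\eta_{a,\Omega_{n+2},\Omega_{n+3}}^{\ast}(cr)$ tends to $0$ as $r\to 0$, so choosing $r$ small enough makes the overall constant in the commutator bound less than any prescribed $\varepsilon$. The main conceptual point to be careful about, and the reason the entire Sections \ref{section singular}-\ref{Section BMO} machinery was set up, is the $R$-independence of the doubling constant of $F$ from Theorem \ref{Thm F}: without it, the constant inherited from the commutator theorem on $F$ could blow up as $r\to 0$ and swamp the smallness coming from $\eta_{a}^{\ast}(cr)$, destroying the $VMO$ localization. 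No separate approximation of $VMO$ functions by uniformly continuous ones is needed, which is the promised simplification mentioned in the introduction.
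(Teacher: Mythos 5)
Your first assertion (the $BMO$ bound) follows the paper's argument exactly: Proposition \ref{Prop check assumptions} upgrades $\widetilde{K}$, Theorem \ref{Thm F} produces the space of homogeneous type $F$ with $R$-independent doubling constant, the commutator theorem \ref{Thm comm sing} is applied on $F$, and Proposition \ref{Prop BMO loc BMO} controls $\left\Vert a\right\Vert _{BMO(F)}$; this part is fine.

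The $VMO$ part, however, as you have phrased it, contains a real gap. You say ``I run exactly the same argument, but with $R$ replaced by the free parameter $r$.'' Taken literally, replacing $R$ by $r$ also changes the cutoff functions $a,b$ in Assumption (H7), hence changes the kernel $\widetilde{K}$ and therefore the operator $T$ itself. What you would then prove is smallness of the commutator of a \emph{different} operator $T_{r}$ (built with $r$-scale cutoffs), not of the fixed operator $T$ appearing in the statement. The theorem asserts a bound for $C_{a}f$ with $T$ \emph{unchanged}, restricted to functions supported in the smaller ball $B(\overline{x},r)$, and the paper's proof is explicit about this distinction (``Recall that the number $R$ is involved in the definition of $T$ ... so that (\ref{T on B_r}) is not the same as (\ref{T on B_R}) for $R$ small''). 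The correct route is to keep $T$ fixed and only shrink the geometry: build a new space $F^{\prime}$ from Theorem \ref{Thm F} at scale $r$, observe that for $r\ll R$ and $x,y\in B(\overline{x},r)$ the cutoffs are identically $1$, so $\widetilde{K}(x,y)=K(x,y)$ there and the kernel conditions on $F^{\prime}$ hold with constants independent of both $R$ and $r$; then apply the commutator theorem on $F^{\prime}$ and conclude with $\left\Vert a\right\Vert _{BMO(F^{\prime})}\leq c\,\eta_{a,\Omega_{n+2},\Omega_{n+3}}^{\ast}(c_{n}r)$. Your observation about the crucial $R$-independence of the doubling constant of the auxiliary space is correct and is indeed the point of Theorem \ref{Thm F}, but the argument reaching that conclusion must fix $T$ and vary only $F^{\prime}$, not both.
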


\begin{proof}
Proceeding like in the proof of Theorem \ref{Theorem L^p C^eta}, and with the
same meaning of the symbols, we prove, applying Theorems \ref{Thm comm sing}
and \ref{sin lp} which hold in spaces of homogeneous type, that%
\[
\left\Vert C_{a}f\right\Vert _{L^{p}\left(  F\right)  }\leq c\left\Vert
a\right\Vert _{BMO\left(  F\right)  }\left\Vert f\right\Vert _{L^{p}\left(
F\right)  }%
\]
for some constant $c$ depending on $p,n$ and the constants involved in the
assumptions on $K$ (but not on $R$); in turn, by Proposition
\ref{Prop BMO loc BMO} the last quantity is bounded by
\[
c\left\Vert a\right\Vert _{BMO_{loc}\left(  \Omega_{n+2},\Omega_{n+3}\right)
}\left\Vert f\right\Vert _{L^{p}\left(  F\right)  }.
\]
Reasoning on the support of $f$ we get, like in the proof of Theorem
\ref{Theorem L^p C^eta},%
\[
\left\Vert C_{a}f\right\Vert _{L^{p}\left(  B\left(  \overline{x},R\right)
\right)  }\leq c\left\Vert a\right\Vert _{BMO_{loc}\left(  \Omega_{n+2}%
,\Omega_{n+3}\right)  }\left\Vert f\right\Vert _{L^{p}\left(  B\left(
\overline{x},R\right)  \right)  }.
\]
To prove the second assertion, we now observe that if we apply the $L^{p}$
continuity estimate%
\begin{equation}
\left\Vert Tf\right\Vert _{L^{p}\left(  B\left(  \overline{x},R\right)
\right)  }\leq c\left\Vert f\right\Vert _{L^{p}\left(  B\left(  \overline
{x},R\right)  \right)  } \label{T on B_R}%
\end{equation}
to functions $f\in L^{p}\left(  B\left(  \overline{x},r\right)  \right)  $ for
any $r<R,$ we find%
\begin{equation}
\left\Vert Tf\right\Vert _{L^{p}\left(  B\left(  \overline{x},r\right)
\right)  }\leq c\left\Vert f\right\Vert _{L^{p}\left(  B\left(  \overline
{x},r\right)  \right)  } \label{T on B_r}%
\end{equation}
so that the same operator $T$ is continuous on $L^{p}\left(  B\left(
\overline{x},r\right)  \right)  ,$ for any $r<R,$ with a constant independent
of $r$. (Recall that the number $R$ is involved in the definition of $T$
(through the cutoff functions), so that (\ref{T on B_r}) is not the same as
\textquotedblleft(\ref{T on B_R}) for $R$ small\textquotedblright).

Take $r$ so small that for $x,y\in B\left(  \overline{x},r\right)  $ we have
$\widetilde{K}\left(  x,y\right)  =K\left(  x,y\right)  .$ Then the kernel
$\widetilde{K}$ satisfies in $B\left(  \overline{x},r\right)  $\ assumptions
(i) in Proposition \ref{Prop check assumptions}, with constants independent of
$r$. We can therefore apply again the commutator theorem on spaces of
homogeneous type (Theorem \ref{Thm comm sing}) to the operator $T$ on the
space $F^{\prime}$ built as in Theorem \ref{Thm F} with $F^{\prime}$
essentially containing $B\left(  \overline{x},r\right)  $ and and comparable
with it, concluding that, for any $f\in L^{p}\left(  B\left(  \overline
{x},r\right)  \right)  ,$%
\begin{align}
\left\Vert C_{a}f\right\Vert _{L^{p}\left(  B\left(  \overline{x},r\right)
\right)  }  &  \leq\left\Vert C_{a}f\right\Vert _{L^{p}\left(  F^{\prime
}\right)  }\leq c\left\Vert a\right\Vert _{BMO\left(  F^{\prime}\right)
}\left\Vert f\right\Vert _{L^{p}\left(  F^{\prime}\right)  }\nonumber\\
&  \leq c\eta_{a,\Omega_{n+2},\Omega_{n+3}}^{\ast}\left(  c_{n}r\right)
\left\Vert f\right\Vert _{L^{p}\left(  B\left(  \overline{x},r\right)
\right)  } \label{comm last}%
\end{align}
where we have applied again Proposition \ref{Prop BMO loc BMO}. Since in the
last inequality the constants $c,c_{n}$ are independent of $r$, if $a\in
VMO_{loc}\left(  \Omega_{n+2},\Omega_{n+3}\right)  ,$ for any $\varepsilon>0$
we can find $r$ small enough so that $c\eta_{a,\Omega_{n+2},\Omega_{n+3}%
}^{\ast}\left(  c_{1}r\right)  <\varepsilon,$ and we are done.
\end{proof}

\begin{theorem}
[Positive commutators of local fractional integrals]\label{Thm comm frac}Let
$K,\widetilde{K}$ be as in Assumption (H7), with $K$ satisfying the growth
condition (\ref{Standard 1'}) for some $\nu>0.$ If%
\[
I_{\nu}f\left(  x\right)  =\int_{B\left(  \overline{x},R\right)  }%
\widetilde{K}(x,y)f\left(  y\right)  d\mu(y)
\]
and, for $a\in BMO_{loc}\left(  \Omega_{n+2},\Omega_{n+3}\right)  $, we set%
\begin{equation}
C_{\nu,a}f\left(  x\right)  =\int_{B\left(  \overline{x},R\right)  }%
\widetilde{K}(x,y)\left\vert a\left(  x\right)  -a\left(  y\right)
\right\vert f\left(  y\right)  d\mu(y) \label{positive comm}%
\end{equation}
then, for any $p\in\left(  1,\frac{1}{\nu}\right)  ,\frac{1}{q}=\frac{1}%
{p}-\nu$ there exists $c\ $such that%
\[
\left\Vert C_{\nu,a}f\right\Vert _{L^{q}\left(  B\left(  \overline
{x},R\right)  \right)  }\leq c\left\Vert a\right\Vert _{BMO_{loc}\left(
\Omega_{n+2},\Omega_{n+3}\right)  }\left\Vert f\right\Vert _{L^{p}\left(
B\left(  \overline{x},R\right)  \right)  }%
\]
for any $f\in L^{p}\left(  B\left(  \overline{x},R\right)  \right)  .$

Moreover, if $a\in VMO_{loc}\left(  \Omega_{n+2},\Omega_{n+3}\right)  $ for
any $\varepsilon>0$ there exists $r>0$ such that for any $f\in L^{p}\left(
B\left(  \overline{x},r\right)  \right)  $ we have%
\[
\left\Vert C_{\nu,a}f\right\Vert _{L^{q}\left(  B\left(  \overline
{x},r\right)  \right)  }\leq\varepsilon\left\Vert f\right\Vert _{L^{p}\left(
B\left(  \overline{x},r\right)  \right)  }.
\]
The constant $c$ depends on $p,\nu,n$ and the constants involved in the
assumptions on $K$ (but not on $R$); the constant $r$ also depends on the
$VMO_{loc}\left(  \Omega_{n+2},\Omega_{n+3}\right)  $ modulus of $a$.
\end{theorem}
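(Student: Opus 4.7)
The plan is to mimic closely the proof of Theorem \ref{Thm commutator}, replacing the commutator theorem for singular integrals by its fractional-integral analogue, which is surely in the Appendix (together with the $L^p$--$L^q$ bound for fractional integrals on spaces of homogeneous type used in Theorem \ref{frac lp-lq}). First, I would apply Theorem \ref{Thm F} to the ball $B(\overline{x},R)$, obtaining an open set $F$ which is a space of homogeneous type, with $B(\overline{x},R)\setminus E\subset F\subset\Omega_{n+2}$, with doubling constant depending only on $n$, and with $\mu(F)$ comparable to $\mu(B(\overline{x},R))$. By Proposition \ref{Prop check assumptions}(i) the kernel $\widetilde{K}$ extended to be zero outside $B(\overline{x},R)\times B(\overline{x},R)$ satisfies the growth estimate (\ref{Standard 1'}) globally on $\Omega_{n+1}\times\Omega_{n+1}$ (with a new constant depending on $n$ and the original one, but not on $R$), hence in particular on $F\times F$. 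The positive commutator $C_{\nu,a}$ then fits into the framework of the corresponding result for spaces of homogeneous type (a kernel bounded by $c/\mu(B(x,\rho(x,y)))^{1-\nu}$ multiplied by $|a(x)-a(y)|$ maps $L^p$ into $L^q$ with norm bounded by $\|a\|_{BMO(F)}$).

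Extending $f\in L^p(B(\overline{x},R))$ by zero outside $B(\overline{x},R)$ and observing that $C_{\nu,a}f$ vanishes outside the support of $a$, which lies in $B(\overline{x},R)$, this immediately yields
\[
\|C_{\nu,a}f\|_{L^q(B(\overline{x},R))}\leq \|C_{\nu,a}f\|_{L^q(F)}\leq c\,\|a\|_{BMO(F)}\,\|f\|_{L^p(F)}.
\]
To pass from $BMO(F)$ to $BMO_{loc}(\Omega_{n+2},\Omega_{n+3})$ we invoke Proposition \ref{Prop BMO loc BMO}, which bounds $\|a\|_{BMO(F)}$ by $c\,\eta^{\ast}_{a,\Omega_{n+2},\Omega_{n+3}}(cR)$, hence a fortiori by $c\,\|a\|_{BMO_{loc}(\Omega_{n+2},\Omega_{n+3})}$. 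Combined with $\|f\|_{L^p(F)}=\|f\|_{L^p(B(\overline{x},R))}$ this gives the first inequality, with the $R$-independent constant we claim.

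For the $VMO_{loc}$ statement the crucial observation, borrowed from the proof of Theorem \ref{Thm commutator}, is the following: if $r$ is small enough (say $c_1 r<\rho(x,y)$ fails only when both $x,y\in B(\overline{x},r)$ where the cutoffs $a,b$ are identically $1$), then $\widetilde{K}(x,y)=K(x,y)$ on $B(\overline{x},r)\times B(\overline{x},r)$. Hence for any such $r$ the restriction of $C_{\nu,a}$ to functions $f\in L^p(B(\overline{x},r))$ equals the positive commutator built from the fractional kernel on $B(\overline{x},r)$ alone, with constants (in estimate (\ref{Standard 1'})) independent of $r$. Applying Theorem \ref{Thm F} to the smaller ball $B(\overline{x},r)$ produces a space of homogeneous type $F'$ with doubling constant depending only on $n$, and repeating the previous reasoning yields
\[
\|C_{\nu,a}f\|_{L^q(B(\overline{x},r))}\leq c\,\|a\|_{BMO(F')}\,\|f\|_{L^p(B(\overline{x},r))}\leq c\,\eta^{\ast}_{a,\Omega_{n+2},\Omega_{n+3}}(c_n r)\,\|f\|_{L^p(B(\overline{x},r))},
\]
where the second inequality uses Proposition \ref{Prop BMO loc BMO} with constants $c,c_n$ independent of $r$. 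Choosing $r$ small enough so that $c\,\eta^{\ast}_{a,\Omega_{n+2},\Omega_{n+3}}(c_n r)<\varepsilon$, which is possible precisely because $a\in VMO_{loc}(\Omega_{n+2},\Omega_{n+3})$, concludes the proof.

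The only delicate point is the $r$-uniformity of the constants in the second part: one must be careful that both the doubling constant of $F'$ (from Theorem \ref{Thm F}) and the constant in Proposition \ref{Prop BMO loc BMO} are independent of the radius of the small ball. Both facts are already built into those statements, so no extra work is needed; the rest is essentially bookkeeping plus invocation of the homogeneous-space fractional commutator theorem from the Appendix.
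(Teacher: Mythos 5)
Your proposal is correct and takes essentially the same route as the paper: apply Theorem \ref{Thm F} to produce the auxiliary space of homogeneous type $F$ (and $F'$ for small $r$), invoke the positive fractional commutator theorem in spaces of homogeneous type (Theorem \ref{Thm comm frac hom}), pass from $BMO(F)$ to $BMO_{loc}$ via Proposition \ref{Prop BMO loc BMO}, and for the $VMO_{loc}$ statement shrink $r$ so that $\widetilde{K}=K$ on the small ball, with all constants independent of $r$. The only tiny wrinkle is the phrase ``vanishes outside the support of $a$'': there the letter $a$ must refer to the cutoff function from Assumption (H7) appearing in $\widetilde{K}=a(x)K(x,y)b(y)$, not to the $BMO$ function (which need not have compact support); the notational collision is already present in the paper, but worth keeping in mind.
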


\begin{proof}
Proceeding like in the proof of Theorem \ref{Thm commutator}, and with the
same meaning of the symbols, applying Theorem \ref{Thm comm frac hom} which
holds in spaces of homogeneous type, we get that%
\begin{align*}
\left\Vert C_{\nu,a}f\right\Vert _{L^{q}\left(  F\right)  }  &  \leq
c\left\Vert a\right\Vert _{BMO\left(  F\right)  }\left\Vert f\right\Vert
_{L^{p}\left(  F\right)  }\\
&  \leq c\left\Vert a\right\Vert _{BMO_{loc}\left(  \Omega_{n+2},\Omega
_{n+3}\right)  }\left\Vert f\right\Vert _{L^{p}\left(  F\right)  }%
\end{align*}
for any $p\in\left(  1,\frac{1}{\nu}\right)  ,\frac{1}{q}=\frac{1}{p}-\nu$.
Like in the proof of Theorem \ref{Thm commutator}, we have
\[
\left\Vert C_{\nu,a}f\right\Vert _{L^{q}\left(  B\left(  \overline
{x},R\right)  \right)  }\leq c\left\Vert a\right\Vert _{BMO_{loc}\left(
\Omega_{n+2},\Omega_{n+3}\right)  }\left\Vert f\right\Vert _{L^{p}\left(
B\left(  \overline{x},R\right)  \right)  }.
\]
for any $f\in L^{p}\left(  B\left(  \overline{x},R\right)  \right)  $. The
same argument in the proof of Theorem \ref{Thm commutator} also gives%
\begin{align*}
\left\Vert C_{\nu,a}f\right\Vert _{L^{p}\left(  B\left(  \overline
{x},r\right)  \right)  }  &  \leq c\eta_{a,\Omega_{n+2},\Omega_{n+3}}^{\ast
}\left(  c_{1}r\right)  \left\Vert f\right\Vert _{L^{q}\left(  B\left(
\overline{x},r\right)  \right)  }\\
&  \leq\varepsilon\left\Vert f\right\Vert _{L^{q}\left(  B\left(  \overline
{x},r\right)  \right)  }%
\end{align*}
for $a\in VMO_{loc}\left(  \Omega_{n+2},\Omega_{n+3}\right)  ,$ and $r$ small
enough so that $c\eta_{a,\Omega_{n+2},\Omega_{n+3}}^{\ast}\left(
c_{1}r\right)  <\varepsilon,$ so we are done.
\end{proof}

\begin{theorem}
[Positive commutators of nonsingular integrals]\label{Thm comm pos}Let
$K,\widetilde{K}$ be as in Assumption (H7), with $K$ satisfying condition
(\ref{standard 2}) with $\nu=0.$ Assume that the operator%
\[
Tf\left(  x\right)  =\int_{B\left(  \overline{x},R\right)  }\widetilde
{K}(x,y)f\left(  y\right)  d\mu(y)
\]
is continuous on $L^{p}\left(  B\left(  \overline{x},R\right)  \right)  $ for
any $p\in\left(  1,\infty\right)  $. For $a\in BMO_{loc}\left(  \Omega
_{n+2},\Omega_{n+3}\right)  ,$ set%
\begin{equation}
C_{a}f\left(  x\right)  =\int_{B\left(  \overline{x},R\right)  }\widetilde
{K}(x,y)\left\vert a\left(  x\right)  -a\left(  y\right)  \right\vert f\left(
y\right)  d\mu(y), \label{positive comm 2}%
\end{equation}
then%
\[
\left\Vert C_{a}f\right\Vert _{L^{p}\left(  B\left(  \overline{x},R\right)
\right)  }\leq c\left\Vert a\right\Vert _{BMO_{loc}\left(  \Omega_{n+2}%
,\Omega_{n+3}\right)  }\left\Vert f\right\Vert _{L^{p}\left(  B\left(
\overline{x},R\right)  \right)  }%
\]
for any $f\in L^{p}\left(  B\left(  \overline{x},R\right)  \right)
,p\in\left(  1,\infty\right)  $.

Moreover, if $a\in VMO_{loc}\left(  \Omega_{n+2},\Omega_{n+3}\right)  $ for
any $\varepsilon>0$ there exists $r>0$ such that for any $f\in L^{p}\left(
B\left(  \overline{x},r\right)  \right)  $ we have%
\[
\left\Vert C_{a}f\right\Vert _{L^{p}\left(  B\left(  \overline{x},r\right)
\right)  }\leq\varepsilon\left\Vert f\right\Vert _{L^{p}\left(  B\left(
\overline{x},r\right)  \right)  }.
\]
The constant $c$ depends on $n,$ the constants involved in the assumptions on
$K,$ and the $L^{p}$-$L^{p}$ norm of the operator $T$ (but not explicitly on
$R$); the constant $r$ also depends on the $VMO_{loc}\left(  \Omega
_{n+2},\Omega_{n+3}\right)  $ modulus of $a$.
\end{theorem}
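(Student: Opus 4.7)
The plan is to mirror the proofs of Theorems \ref{Thm commutator} and \ref{Thm comm frac}: embed $B(\overline{x},R)$ in a space of homogeneous type $F$ via Theorem \ref{Thm F}, transfer the hypotheses on $T$ and on the kernel $\widetilde{K}$ to $F$, apply the positive-commutator theorem for $L^{p}$-bounded operators with H\"{o}lder-regular kernels supplied by the Appendix, and convert the $BMO(F)$-norm of $a$ into its local counterpart via Proposition \ref{Prop BMO loc BMO}.

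By Theorem \ref{Thm F} there is an open set $F$ with $B(\overline{x},R)\setminus E\subset F\subset\Omega_{n+2}$ which is a space of homogeneous type with doubling constant independent of $R$. Since both cutoffs in $\widetilde{K}(x,y)=a(x)K(x,y)b(y)$ are supported in $B(\overline{x},R)$, for any $f\in L^{p}(F)$ the restriction of $Tf$ to $F$ agrees with $T(f\chi_{B(\overline{x},R)})$ and is itself supported in $B(\overline{x},c_{2}R)$; hence the assumed $L^{p}(B(\overline{x},R))$-continuity of $T$ lifts to $L^{p}(F)$-continuity with essentially the same constant. The regularity condition (\ref{standard 2}) with $\nu=0$ transfers from $K$ to $\widetilde{K}$ on $F\times F$ by the same decomposition used in Proposition \ref{Prop check assumptions}(i). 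The appendix's positive commutator theorem then gives
\[
\|C_{a}f\|_{L^{p}(F)}\leq c\,\|a\|_{BMO(F)}\,\|f\|_{L^{p}(F)},
\]
and Proposition \ref{Prop BMO loc BMO} replaces $\|a\|_{BMO(F)}$ with $c\,\|a\|_{BMO_{loc}(\Omega_{n+2},\Omega_{n+3})}$, from which the first claimed inequality follows after restricting back to $B(\overline{x},R)$.

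For the $VMO$ statement, I shrink the ball. For $r<R$ small enough that $\widetilde{K}=K$ on $B(\overline{x},r)\times B(\overline{x},r)$, I apply Theorem \ref{Thm F} to $B(\overline{x},r)$ and obtain a space of homogeneous type $F'$ essentially containing $B(\overline{x},r)$, of diameter $\lesssim r$, with doubling constants depending only on $n$. The identity $Tf=T(f\chi_{F'})$ on $F'$ (valid once $F'$ is small enough to lie in the region where the cutoffs equal one) shows that the $L^{p}$-boundedness of $T$ on $L^{p}(B(\overline{x},R))$ descends to $L^{p}(F')$ with a constant independent of $r$. Applying the appendix commutator theorem on $F'$ and using Proposition \ref{Prop BMO loc BMO} a second time yields
\[
\|C_{a}f\|_{L^{p}(B(\overline{x},r))}\leq c\,\eta^{\ast}_{a,\Omega_{n+2},\Omega_{n+3}}(c_{n}r)\,\|f\|_{L^{p}(B(\overline{x},r))},
\]
with $c$ and $c_{n}$ independent of $r$; for $a\in VMO_{loc}(\Omega_{n+2},\Omega_{n+3})$ the prefactor vanishes as $r\to 0$, so it can be made less than any prescribed $\varepsilon>0$.

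The main obstacle I anticipate is the transfer of the H\"{o}lder regularity from $K$ to $\widetilde{K}$ in the absence of a pointwise size bound on $K$: Proposition \ref{Prop check assumptions}(i), as stated, uses (\ref{standard 1}) to control the term involving the cutoff difference $|a(x_{0})-a(x)|$, and here only (\ref{standard 2}) is assumed on $K$. The workaround is to observe that this term is supported in the annular region where $\rho(x_{0},x)$ is comparable to $R$ (one of the cutoffs must fail to be constant there), so the H\"{o}lder quotient $(\rho(x_{0},x)/\rho(x_{0},y))^{\beta}$ is already of order one and the remaining factor can be absorbed into the constant; alternatively, the appendix commutator theorem can be formulated so that its hypotheses are simply the $L^{p}$-boundedness of $T$ together with H\"{o}lder regularity of the kernel, which is the standard form of this result and is exactly what (\ref{standard 2}) on $\widetilde{K}$ supplies.
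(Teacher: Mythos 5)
Your outline mirrors the paper's proof exactly: embed $B(\overline{x},R)$ in the space of homogeneous type $F$ from Theorem \ref{Thm F}, invoke the appendix's positive-commutator Theorem \ref{Thm comm pos hom} on $F$, convert $BMO(F)$ to $BMO_{loc}$ via Proposition \ref{Prop BMO loc BMO}, and repeat the reduction at a smaller scale $r<R$ for the $VMO$ statement. The paper's own proof is a one-line pointer to the proof of Theorem \ref{Thm comm frac}, so your write-up is, if anything, more explicit.

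The obstacle you flagged is genuine and worth raising: Proposition \ref{Prop check assumptions}(i) bounds the term $II=\left\vert a(x_0)-a(x)\right\vert\,\left\vert K(x,y)b(y)\right\vert$ by invoking the size bound (\ref{standard 1}), which Theorem \ref{Thm comm pos} does not assume on $K$. But your first workaround is not sound. The set where $\left\vert a(x_0)-a(x)\right\vert\neq 0$ is not confined to the region $\rho(x_0,x)\sim R$: two nearby points, one just inside and one just outside $B(\overline{x},c_1 R)$, produce a nonzero difference however close they are, and the H\"older bound $\left\vert a(x_0)-a(x)\right\vert\leq c(\rho(x_0,x)/R)^\alpha$ carries no lower bound on $\rho(x_0,x)$. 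Moreover, if $\rho(x_0,x)\sim R$, then $\rho(x_0,y)>M\rho(x_0,x)$ together with $b(y)\neq 0$ is vacuous, so that regime contributes nothing anyway. Since (\ref{standard 2}) provides no pointwise bound on $K$ itself, the term $II$ cannot be ``absorbed into the constant''. Your second remark is a restatement of the appendix theorem rather than a resolution: Theorem \ref{Thm comm pos hom} already has exactly the hypothesis form you describe, but the object that must satisfy (\ref{standard 2}) when applying it on $F$ is $\widetilde{K}$, which is precisely what cannot be verified from the hypotheses as written. The honest conclusion is that either (\ref{standard 1}) should be adjoined to the hypotheses of Theorem \ref{Thm comm pos}, or (\ref{standard 2}) should be imposed directly on $\widetilde{K}$; the paper's own terse proof does not address this transfer step either, so your proposal is at the same level of rigor as the source while being more candid about the difficulty.
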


\begin{proof}
The proof is very similar to that of Theorem \ref{Thm comm frac}. Here we need
to apply Theorem \ref{Thm comm pos hom} which holds in spaces of homogeneous type.
\end{proof}

\begin{remark}
The presence of an absolute value inside the integral in (\ref{positive comm})
and (\ref{positive comm 2}) make the corresponding commutator theorems more
flexible than the analogue for singular integrals. Namely, if Theorem
\ref{Thm comm frac} or \ref{Thm comm pos} applies to a kernel $K,$ it also
applies to any other positive kernel equivalent to $K,$ differently from what
happens for singular integrals, for which the cancellation property is crucial.
\end{remark}

\section{Local maximal operators\label{Section Maximal}}

In this section we briefly deal with the local maximal operator in locally
homogeneous spaces. The result is substantially already known (see for
instance \cite{K}), but for the sake of completeness we state it explicitly
with the language and notation of this paper.

\begin{definition}
Fix $\Omega_{n},\Omega_{n+1}$ and, for any $f\in L^{1}\left(  \Omega
_{n+1}\right)  $ define the \emph{local maximal function}%
\[
M_{\Omega_{n},\Omega_{n+1}}f\left(  x\right)  =\sup_{r\leq r_{n}}\frac{1}%
{\mu\left(  B\left(  x,r\right)  \right)  }\int_{B\left(  x,r\right)
}\left\vert f\left(  y\right)  \right\vert d\mu\left(  y\right)  \text{ for
}x\in\Omega_{n}%
\]
where $r_{n}=2\varepsilon_{n}/\left(  2B_{n}+3B_{n}^{2}\right)  ,$ with
$B_{n}$ as in (\ref{Hp 2}).
\end{definition}

The following Vitali covering lemma holds, with the usual proof (see e.g.
\cite[Chap. 3]{CW}), thanks to the fact that by our restriction on $x$ and $r$
we can apply the local doubling condition to the involved balls:

\begin{lemma}
\label{Vitali cover lemma}Let $E$ be a measurable subset of $\Omega_{n}$ that
is covered by the union of a family of balls $B\left(  x_{\alpha},r_{\alpha
}\right)  $ centered at points of $\Omega_{n}$ and with radii $r_{\alpha}\leq
r_{n}.$ Then one can select a disjoint countable subcollection $\left\{
B\left(  x_{\alpha_{j}},r_{\alpha_{j}}\right)  \right\}  _{j=1}^{\infty}$ so
that
\[
E\subset%
{\displaystyle\bigcup\limits_{j=1}^{\infty}}
B\left(  x_{\alpha_{j}},Kr_{\alpha_{j}}\right)  \text{ with }K=\left(
2B_{n}+3B_{n}^{2}\right)  ,
\]
and, for some constant $c$ depending on $n,$%
\[
\underset{j=1}{\overset{\infty}{\sum}}\mu\left(  B\left(  x_{\alpha_{j}%
},r_{\alpha_{j}}\right)  \right)  \geq c\mu\left(  E\right)  .
\]

\end{lemma}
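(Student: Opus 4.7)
The plan is to execute the classical greedy Vitali selection, adapted to our locally homogeneous setting by using (H5) and (H6). First, I would partition the original family into dyadic size classes: for $k=0,1,2,\ldots$ set
\[
\mathcal{F}_k = \{B(x_{\alpha},r_{\alpha})\, :\, 2^{-k-1}r_n < r_{\alpha} \leq 2^{-k}r_n\},
\]
so that within each $\mathcal{F}_k$ all radii are comparable up to a factor of $2$. I would then greedily choose $\mathcal{G}_0 \subset \mathcal{F}_0$ as a maximal pairwise disjoint subfamily, and inductively $\mathcal{G}_k \subset \mathcal{F}_k$ as a maximal subfamily of balls pairwise disjoint from each other and from every ball already chosen in $\mathcal{G}_0 \cup \cdots \cup \mathcal{G}_{k-1}$. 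By Zorn's lemma (or a transfinite enumeration) such maximal families exist. The resulting collection $\{B(x_{\alpha_j},r_{\alpha_j})\}$ is pairwise disjoint by construction; countability is automatic because every selected ball has positive measure (by (H6), since $r_{\alpha_j}\leq r_n\leq\varepsilon_n$) and lies in the bounded set $\Omega_{n+1}$ which has finite measure.

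Next I would verify the covering property. If $B(x_{\alpha},r_{\alpha}) \in \mathcal{F}_k$ is not itself selected, then by maximality it must intersect some already chosen ball $B(x_{\alpha_j},r_{\alpha_j})$ with $r_{\alpha_j} > 2^{-k-1}r_n$, hence $r_{\alpha} \leq 2r_{\alpha_j}$. Fix any $y \in B(x_{\alpha},r_{\alpha})$ and a witness $z$ in the intersection. All points involved belong to $\Omega_{n+1}$ thanks to the defining bound $r_n=2\varepsilon_n/(2B_n+3B_n^2)$ and (H4)(ii), so two applications of the quasitriangle inequality give
\[
\rho(y,x_{\alpha_j}) \leq B_n\rho(y,x_{\alpha}) + B_n^2\rho(x_{\alpha},z) + B_n^2\rho(z,x_{\alpha_j}) \leq (B_n+B_n^2)r_{\alpha} + B_n^2 r_{\alpha_j} \leq (2B_n+3B_n^2)r_{\alpha_j},
\]
which is exactly $Kr_{\alpha_j}$; thus $E$ is covered by the dilated selected balls.

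Finally, the measure inequality reduces to a bounded number of applications of local doubling. Since $Kr_{\alpha_j}\leq Kr_n=2\varepsilon_n$, iterating (H6) at most $N=\lceil\log_2 K\rceil+1$ times (each iteration stays within the regime $r\leq\varepsilon_n$ where the doubling is valid) yields $\mu(B(x_{\alpha_j},Kr_{\alpha_j}))\leq C_n^{N}\mu(B(x_{\alpha_j},r_{\alpha_j}))$. Summing over $j$ and using that the selected balls are disjoint,
\[
\mu(E) \leq \sum_j \mu(B(x_{\alpha_j},Kr_{\alpha_j})) \leq C_n^{N}\sum_j \mu(B(x_{\alpha_j},r_{\alpha_j})),
\]
which gives the second claim with $c=C_n^{-N}$. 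The main delicate point is the bookkeeping: one must check that centers and witnesses stay in a domain where (H5) is available with the advertised constant, and that every dilated ball still satisfies the radius restriction $Kr_{\alpha_j}\leq\varepsilon_n\cdot 2$ needed to iterate local doubling. The definition $r_n=2\varepsilon_n/K$ is engineered precisely so that both constraints are automatic, and no further loss in constants occurs.
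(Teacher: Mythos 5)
Your argument is correct and is precisely the ``usual proof'' from Coifman--Weiss that the paper cites and leaves to the reader: dyadic size classes, greedy maximal disjoint selection, and the dilation-plus-iterated-doubling estimate. Your explicit accounting of why $r_{\alpha}\leq r_{n}=2\varepsilon_{n}/(2B_{n}+3B_{n}^{2})$ keeps the quasitriangle and local doubling (H6) applications within their valid regimes is exactly the bookkeeping the paper flags as ``the point'' of the restriction on $x$ and $r$, and the iterated-doubling step is valid when read as halving down from $Kr_{\alpha_j}\leq 2\varepsilon_n$ so every input radius is $\leq\varepsilon_n$.
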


Then, again repeating the standard proof, one can establish the following:

\begin{theorem}
\label{Thm maximal}Let $f$ be a measurable function defined on $\Omega_{n+1}.$
The following hold:

(a) If $f\in L^{p}\left(  \Omega_{n+1}\right)  $ for some $p\in\left[
1,\infty\right]  $, then $M_{\Omega_{n},\Omega_{n+1}}f$ is finite almost
everywhere in $\Omega_{n}$;

(b) if $f\in L^{1}\left(  \Omega_{n+1}\right)  $, then for every $t>0$,
\[
\mu\left(  \left\{  x\in\Omega_{n}:\left(  M_{\Omega_{n},\Omega_{n+1}%
}f\right)  \left(  x\right)  >t\right\}  \right)  \leq\frac{c_{n}}{t}%
\int_{\Omega_{n+1}}\left\vert f\left(  y\right)  \right\vert d\mu\left(
y\right)  ;
\]

(c) if $f\in L^{p}\left(  \Omega_{n+1}\right)  $, $1<p\leq\infty$, then
$M_{\Omega_{n},\Omega_{n+1}}f\in L^{p}\left(  \Omega_{n}\right)  $ and
\[
\left\Vert M_{\Omega_{n},\Omega_{n+1}}f\right\Vert _{L^{p}\left(  \Omega
_{n}\right)  }\leq c_{n,p}\left\Vert f\right\Vert _{L^{p}\left(  \Omega
_{n+1}\right)  }.
\]

\end{theorem}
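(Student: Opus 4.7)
The plan is to follow the classical Hardy--Littlewood--Wiener argument, with Lemma \ref{Vitali cover lemma} playing the role of its Euclidean analogue and the local doubling condition (H6) invoked wherever one normally uses global doubling. The technical point to watch is that every ball produced by the covering must stay inside $\Omega_{n+1}$ and be in the range where (H6) applies; the specific choice $r_{n}=2\varepsilon_{n}/(2B_{n}+3B_{n}^{2})$ is tailored precisely so that if $x\in\Omega_{n}$ and $r\leq r_{n}$ then $B(x,Kr)\subset\Omega_{n+1}$, with $K=2B_{n}+3B_{n}^{2}$.

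First I would establish (b). Fix $t>0$ and set $E_{t}=\{x\in\Omega_{n}:M_{\Omega_{n},\Omega_{n+1}}f(x)>t\}$. For each $x\in E_{t}$ the definition of $M_{\Omega_{n},\Omega_{n+1}}$ produces a radius $r_{x}\leq r_{n}$ with
\[
\mu(B(x,r_{x}))<\frac{1}{t}\int_{B(x,r_{x})}|f|\,d\mu.
\]
Applying Lemma \ref{Vitali cover lemma} to the family $\{B(x,r_{x})\}_{x\in E_{t}}$ extracts a disjoint subcollection $\{B(x_{j},r_{j})\}$ with $E_{t}\subset\bigcup_{j}B(x_{j},Kr_{j})$. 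Since $r_{j}\leq r_{n}$ and $x_{j}\in\Omega_{n}$, iterating the local doubling condition a bounded number of times (each doubling is admissible since intermediate radii do not exceed $\varepsilon_{n}$) gives $\mu(B(x_{j},Kr_{j}))\leq C_{n}\mu(B(x_{j},r_{j}))$. Summing, and using the disjointness of the $B(x_{j},r_{j})$ together with $B(x_{j},r_{j})\subset\Omega_{n+1}$, we get
\[
\mu(E_{t})\leq\sum_{j}\mu(B(x_{j},Kr_{j}))\leq C_{n}\sum_{j}\mu(B(x_{j},r_{j}))\leq\frac{C_{n}}{t}\int_{\Omega_{n+1}}|f|\,d\mu,
\]
which is (b).

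Part (c) then follows by interpolation. The endpoint $p=\infty$ is trivial from the definition, since each average of $|f|$ is bounded by $\|f\|_{L^{\infty}(\Omega_{n+1})}$, so $\|M_{\Omega_{n},\Omega_{n+1}}f\|_{L^{\infty}(\Omega_{n})}\leq\|f\|_{L^{\infty}(\Omega_{n+1})}$. Marcinkiewicz interpolation between this bound and the weak $(1,1)$ inequality from (b) yields the strong $(p,p)$ estimate for $1<p<\infty$ with a constant depending on $n$ and $p$. Finally, (a) is immediate from the other two parts: for $p=1$ the set $\{M_{\Omega_{n},\Omega_{n+1}}f=\infty\}$ is contained in $\bigcap_{k}\{M_{\Omega_{n},\Omega_{n+1}}f>k\}$ and has measure zero by (b); for $1<p\leq\infty$ it follows at once from $M_{\Omega_{n},\Omega_{n+1}}f\in L^{p}(\Omega_{n})$.

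The only point at which some care is required, relative to the usual proof, is checking that every ball that enters the covering and the doubling iteration is legitimately inside the doubling range; once the quantitative choice of $r_{n}$ is used to verify this, no genuine obstacle remains, which is why the paper merely refers to the standard proof.
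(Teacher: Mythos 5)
Your proposal is correct and is precisely the standard Hardy--Littlewood--Marcinkiewicz argument that the paper itself invokes (the paper gives no explicit proof, writing only that one establishes the theorem ``again repeating the standard proof'' after stating Lemma \ref{Vitali cover lemma}). One small inefficiency: Lemma \ref{Vitali cover lemma} as stated already provides the measure estimate $\sum_{j}\mu\left(B\left(x_{\alpha_{j}},r_{\alpha_{j}}\right)\right)\geq c\,\mu\left(E\right)$, so the intermediate step in which you iterate the local doubling condition to compare $\mu\left(B\left(x_{j},Kr_{j}\right)\right)$ with $\mu\left(B\left(x_{j},r_{j}\right)\right)$ is already packaged inside the lemma and need not be rederived; moreover, when you do rederive it, the cleanest way to stay within the range $r\leq\varepsilon_{n}$ is to halve down from the larger radius $Kr_{j}\leq2\varepsilon_{n}$ rather than to double up from $r_{j}$, since $K$ need not be a power of two. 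Neither point affects the validity of the argument.
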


\section{Quasisymmetric quasidistances\label{section quasisymmetric}}

In this section we want to extend the main results of the previous theory to
the more general case of a quasisymmetric $\rho.$ We stress the fact that the
results we are going to extend are those of Sections \ref{section singular} to
\ref{Section Maximal}, but not the construction of dyadic cubes of Section
\ref{Section Dyadic}.

\begin{definition}
[Quasisymmetric locally homogeneous space]We make the following assumptions.

(K1) Let $\Omega$ be a set, endowed with a function $\rho:\Omega\times
\Omega\rightarrow\lbrack0,\infty)$ such that for any $x,y\in\Omega$
$\rho\left(  x,y\right)  =0\Leftrightarrow x=y.$

For any $x\in\Omega,r>0,$ let us define the ball%
\[
B\left(  x,r\right)  =\left\{  y\in\Omega:\rho\left(  x,y\right)  <r\right\}
\]
and the coball%
\[
B^{\prime}\left(  x,r\right)  =\left\{  y\in\Omega:\rho\left(  y,x\right)
<r\right\}  .
\]
Let us define a topology in $\Omega$ saying that $A\subset\Omega$ is open if
for any $x\in A$ there exists $r>0$ such that $B\left(  x,r\right)  \subset
A$. Also, we will say that $E\subset\Omega$ is bounded if $E$ is contained in
some ball. Let us assume that:

(K2') $\rho\left(  x,y\right)  $ is a continuous function of $x$ for any fixed
$y\in\Omega$ and a continuous function of $y$ for any fixed $x\in\Omega$.

(H3) Let $\mu$ be a positive regular Borel measure in $\Omega.$

(K4) Assume there exists an increasing sequence $\left\{  \Omega_{n}\right\}
_{n=1}^{\infty}$ of bounded measurable subsets of $\Omega,$ such that:%
\[%
{\displaystyle\bigcup_{n=1}^{\infty}}
\Omega_{n}=\Omega
\]
and such for, any $n=1,2,3,...$:

(i) the closure of $\Omega_{n}$ in $\Omega$ is compact;

(ii) there exists $\varepsilon_{n}>0$ such that%
\begin{align*}
\left\{  x\in\Omega:\rho\left(  x,y\right)  <2\varepsilon_{n}\text{ for some
}y\in\Omega_{n}\right\}   &  \subset\Omega_{n+1};\\
\left\{  x\in\Omega:\rho\left(  y,x\right)  <2\varepsilon_{n}\text{ for some
}y\in\Omega_{n}\right\}   &  \subset\Omega_{n+1};
\end{align*}
\qquad

(K5) there exist $A_{n},B_{n}\geq1$ such that for any $x,y,z\in\Omega_{n}$%
\begin{align*}
\rho\left(  x,y\right)   &  \leq A_{n}\rho\left(  y,x\right)  ;\\
\rho\left(  x,y\right)   &  \leq B_{n}\left(  \rho\left(  x,z\right)
+\rho\left(  z,y\right)  \right)  ;
\end{align*}

(H6) there exists $C_{n}>1$ such that for any $x\in\Omega_{n},0<r\leq
\varepsilon_{n}$ we have%
\[
\text{ }0<\mu\left(  B\left(  x,2r\right)  \right)  \leq C_{n}\mu\left(
B\left(  x,r\right)  \right)  <\infty.
\]
(Note that for $x\in\Omega_{n}$ and $r\leq\varepsilon_{n}$ we also have
$B\left(  x,2r\right)  \subset\Omega_{n+1}$).

We will say that $\left(  \Omega,\left\{  \Omega_{n}\right\}  _{n=1}^{\infty
},\rho,\mu\right)  $ is a \emph{quasisymmetric locally homogeneous space }if
assumptions (K1), (K2'), (H3), (K4), (K5), (H6) hold.
\end{definition}

With a proof very similar to that of Proposition \ref{Prop topology} in
Section \ref{section abstract} we can prove the following:

\begin{proposition}
Condition (K2') is equivalent to the validity of both the following

(K2) (a) the balls and coballs are open with respect to this topology;

(K2) (b) for any $x\in\Omega$ and $r>0$ the closure of $B\left(  x,r\right)  $
is contained in $\left\{  y\in\Omega:\rho\left(  x,y\right)  \leq r\right\}  $
and the closure of $B^{\prime}\left(  x,r\right)  $ is contained in $\left\{
y\in\Omega:\rho\left(  y,x\right)  \leq r\right\}  .$
\end{proposition}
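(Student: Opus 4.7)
The plan is to adapt the proof of Proposition \ref{Prop topology} essentially verbatim, treating balls and coballs symmetrically. The only conceptual modification is that, since $\rho$ is no longer symmetric, we must carry out the argument once with $\rho(x,\cdot)$ and once with $\rho(\cdot,x)$, using the two openness conditions in (K2)(a) and the two closure inclusions in (K2)(b) separately. The first countability provided by the base of balls (and, once we know they are open, coballs) lets us continue to argue via sequences, which is what makes the symmetric-case proof carry over without any new ingredient.

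For the direction (K2)$\,\Rightarrow\,$(K2'), I would fix $y$ and a sequence $x_k\to x$ and show $\rho(x_k,y)\to\rho(x,y)$ exactly as in the proof of Proposition \ref{Prop topology}: the limsup estimate follows because $x$ lies in the open coball $B'(y,r+\varepsilon)$ (openness of coballs comes from (K2)(a)), so $x_k$ is eventually in $B'(y,r+\varepsilon)$ and therefore $\rho(x_k,y)<r+\varepsilon$; the liminf estimate follows because $x$ lies in the complement of $\overline{B'(y,r-\varepsilon)}$, which by (K2)(b) is contained in $\{z:\rho(z,y)\le r-\varepsilon/2\}^c$, hence $x$ has an open neighborhood on which $\rho(\cdot,y)\ge r-\varepsilon$. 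To get continuity of $\rho(x,\cdot)$ at fixed $x$, I would run the same argument but swapping the roles of balls and coballs, using now the openness of balls $B(x,r+\varepsilon)$ and the closure inclusion $\overline{B(x,r-\varepsilon)}\subset\{y:\rho(x,y)\le r-\varepsilon/2\}$.

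For the reverse direction (K2')$\,\Rightarrow\,$(K2), I would show openness of a ball $B(x,r)$ by the following argument: pick $y\in B(x,r)$; if no ball around $y$ were contained in $B(x,r)$, we could extract a sequence $y_k\to y$ with $\rho(x,y_k)\ge r$, contradicting continuity of $z\mapsto\rho(x,z)$ at $y$. Openness of the coball $B'(x,r)$ follows by the analogous argument using continuity of the first argument. The closure inclusion for a ball is immediate: if $y\in\overline{B(x,r)}$, first countability supplies a sequence $y_k\in B(x,r)$ with $y_k\to y$, and continuity in the second argument yields $\rho(x,y)=\lim\rho(x,y_k)\le r$. Again the coball case is symmetric.

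I do not anticipate a genuine obstacle: everything reduces to mechanical duplication of the symmetric argument. The only point that deserves care is making sure that, when asserting first countability of the topology in the reverse direction, I use the openness of balls that has just been established rather than (K2')$'$'s abstract continuity statement, so that the logical order is clean. Once that is noted, the proof is a straightforward parallel of Proposition \ref{Prop topology}, and I would simply indicate this parallel rather than write the four sub-arguments in full detail.
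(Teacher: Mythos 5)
Your proposal is essentially correct and follows exactly the route the paper indicates: the paper gives no explicit proof of this proposition, stating only that it follows by a proof \emph{very similar to that of Proposition \ref{Prop topology}}, and your plan is precisely to run that argument twice, once in each variable, with coballs playing the role that balls play in the symmetric case. The structure of your four sub-arguments (limsup via openness of coballs/balls, liminf via the closure inclusion (K2)(b), openness and closure in the converse direction via continuity plus first countability) matches what a careful duplication of the symmetric proof produces, and your remark about establishing openness of balls before invoking the sequence characterization of closure addresses the one point where the logical order genuinely matters.

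One small slip worth fixing in the write-up: in the liminf step you assert that the complement of $\overline{B'(y,r-\varepsilon)}$ is contained in $\{z:\rho(z,y)\le r-\varepsilon/2\}^{c}$; the inclusion actually goes the other way. What (K2)(b) gives is $\overline{B'(y,r-\varepsilon)}\subset\{z:\rho(z,y)\le r-\varepsilon\}\subset B'(y,r-\varepsilon/2)$, so that $x$ (with $\rho(x,y)=r$) lies outside $\overline{B'(y,r-\varepsilon)}$; the complement of this closed set is the open neighborhood of $x$ on which $\rho(\cdot,y)\ge r-\varepsilon$. This is exactly the argument of Proposition \ref{Prop topology}, only you reversed an inclusion when restating it; the conclusion you draw from it is the correct one, so this is a cosmetic error rather than a gap.
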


It is also immediate to check the following

\begin{proposition}
If $\left(  \Omega,\left\{  \Omega_{n}\right\}  _{n=1}^{\infty},\rho
,\mu\right)  $ is a quasisymmetric locally homogeneous space and%
\[
\rho^{\ast}\left(  x,y\right)  =\rho\left(  x,y\right)  +\rho\left(
y,x\right)  ,
\]
then $\left(  \Omega,\left\{  \Omega_{n}\right\}  _{n=1}^{\infty},\rho^{\ast
},\mu\right)  $ is a locally homogeneous space, and its constants can be
bounded in terms of the constants of $\left(  \Omega,\left\{  \Omega
_{n}\right\}  _{n=1}^{\infty},\rho,\mu\right)  .$
\end{proposition}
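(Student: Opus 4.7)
The plan is to verify the six axioms (H1), (H2'), (H3), (H4), (H5), (H6) for $(\Omega, \{\Omega_n\}, \rho^*, \mu)$ one by one, relying on a single key observation: within $\Omega_n$, the functions $\rho$ and $\rho^*$ are pointwise equivalent. Indeed, for $x,y\in \Omega_n$, by (K5)
\[
\rho(x,y)\;\leq\;\rho^*(x,y)\;=\;\rho(x,y)+\rho(y,x)\;\leq\;(1+A_n)\rho(x,y),
\]
and similarly $\rho(y,x)\leq \rho^*(x,y)\leq(1+A_n)\rho(y,x)$. This identity will be the engine driving every verification.

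First, (H1) is trivial: symmetry of $\rho^*$ is built into its definition, and $\rho^*(x,y)=0$ forces both $\rho(x,y)=\rho(y,x)=0$, hence $x=y$. For (H2'), continuity of $\rho^*(\cdot,y)$ is immediate from (K2') as $\rho^*$ is a sum of two functions that are continuous in $x$. The subtle point is that continuity must be with respect to the topology induced by $\rho^*$-balls; for this I would show that this topology coincides with the $\rho$-topology. The inclusion $\rho\leq \rho^*$ gives $B^*(x,r)\subset B(x,r)$, so every $\rho$-open set is $\rho^*$-open; conversely, given $x\in\Omega_n$ and $y$ with $\rho(x,y)$ small, axiom (K4)(ii) places $y$ in $\Omega_{n+1}$, and the local equivalence above yields $\rho^*(x,y)\leq(1+A_{n+1})\rho(x,y)$, so small $\rho$-balls around $x$ sit inside prescribed $\rho^*$-balls. (H3) is nothing to prove, as $\mu$ is unchanged.

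For (H4), the sequence $\{\Omega_n\}$ is the same; compactness of closures is topological and the topologies agree, so (i) is automatic; boundedness of $\Omega_n$ under $\rho^*$ follows because $\overline{\Omega}_n$ is compact and $\rho^*(\cdot,y)$ is continuous, hence bounded on $\overline{\Omega}_n$ for any fixed $y\in\Omega_n$. Property (ii) follows by choosing $\varepsilon_n^*=\varepsilon_n$: if $\rho^*(x,y)<2\varepsilon_n$ for some $y\in\Omega_n$, then $\rho(x,y)<2\varepsilon_n$ and (K4)(ii) gives $x\in\Omega_{n+1}$. The quasi-triangle inequality (H5) is a direct computation: for $x,y,z\in\Omega_n$,
\[
\rho^*(x,y)\leq B_n\bigl(\rho(x,z)+\rho(z,y)\bigr)+B_n\bigl(\rho(y,z)+\rho(z,x)\bigr)=B_n\bigl(\rho^*(x,z)+\rho^*(z,y)\bigr),
\]
so we may take $B_n^*=B_n$.

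The main step is (H6), the local doubling of $\rho^*$-balls. Here I would exploit the sandwich
\[
B\!\left(x,\tfrac{r}{1+A_{n+1}}\right)\;\subset\;B^*(x,r)\;\subset\;B(x,r)
\qquad (x\in\Omega_n,\ r\leq\varepsilon_n),
\]
which is the local equivalence applied to points in $\Omega_{n+1}$ (every $y$ in the large ball lies in $\Omega_{n+1}$ by (K4)(ii)). Consequently
\[
\mu\bigl(B^*(x,2r)\bigr)\leq \mu\bigl(B(x,2r)\bigr)\quad\text{and}\quad \mu\bigl(B^*(x,r)\bigr)\geq \mu\Bigl(B\bigl(x,\tfrac{r}{1+A_{n+1}}\bigr)\Bigr),
\]
and iterating the $\rho$-doubling (H6) a fixed number $k=\lceil\log_2(2(1+A_{n+1}))\rceil$ of times bridges these two radii, yielding $\mu(B^*(x,2r))\leq C_n^k\,\mu(B^*(x,r))$ with $C_n^*:=C_n^k$. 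The positivity and finiteness in (H6) transfer from the corresponding bounds for $\rho$-balls. Taking $\varepsilon_n^*=\varepsilon_n/2^{k}$ guarantees all intermediate $\rho$-radii used in the iteration satisfy the hypothesis of (H6) for $\rho$. The main delicate point throughout is keeping track of which $A_n$, $B_n$, $C_n$ appears, using the admissible monotonicity of these sequences to ensure all constants can be bounded in terms of those of $(\Omega,\{\Omega_n\},\rho,\mu)$.
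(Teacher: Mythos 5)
The paper states this proposition without proof, remarking only that it is ``immediate to check,'' so there is no proof in the paper to compare against. Your axiom-by-axiom verification is correct and is the natural argument: the engine is the local two-sided equivalence $\rho(x,y)\le\rho^{\ast}(x,y)\le(1+A_{n})\rho(x,y)$ on each $\Omega_{n}$, from which the identity of the $\rho$- and $\rho^{\ast}$-topologies, (H2'), the compactness in (H4)(i), (H5) with $B_{n}^{\ast}=B_{n}$, and the local doubling (H6) with $C_{n}^{\ast}=C_{n}^{k}$ all follow; your radius bookkeeping for (H6) is sound. One small exposition point: you should fix $\varepsilon_{n}^{\ast}=\varepsilon_{n}/2^{k}$ (with $k=\lceil\log_{2}(2(1+A_{n+1}))\rceil$) once at the outset rather than first taking $\varepsilon_{n}^{\ast}=\varepsilon_{n}$ for (H4)(ii) and later shrinking it for (H6); since (H4)(ii) only becomes easier when $\varepsilon_{n}^{\ast}$ decreases, this costs nothing and keeps the constants consistent, all of them manifestly bounded by the constants $A_{n+1},B_{n},C_{n},\varepsilon_{n}$ of the quasisymmetric space.
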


We now want to apply the results we have proved in Sections
\ref{section singular} to \ref{Section Maximal} to show that similar results
hold in a quasisymmetric locally homogeneous space. Let us discuss in detail
one of these results, the others being similar.

\begin{theorem}
[$L^{p}$ and $C^{\eta}$ estimates for singular integrals]Theorem
\ref{Theorem L^p C^eta} still holds if $\left(  \Omega,\left\{  \Omega
_{n}\right\}  _{n=1}^{\infty},\rho,\mu\right)  $ is a quasisymmetric locally
homogeneous space.
\end{theorem}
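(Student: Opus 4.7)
The plan is to reduce the quasisymmetric case to the already established symmetric case via the symmetrization $\rho^*(x,y) = \rho(x,y) + \rho(y,x)$. By the proposition just proved, $(\Omega,\{\Omega_n\}_{n=1}^\infty,\rho^*,\mu)$ is a (symmetric) locally homogeneous space, with constants controlled by $A_n, B_n, C_n, \varepsilon_n$. Moreover, for any $x,y \in \Omega_n$ one has the equivalence
\[
\rho(x,y) \leq \rho^*(x,y) \leq (1+A_n)\rho(x,y),
\]
so $\rho$-balls and $\rho^*$-balls of comparable radii coincide up to a fixed scaling of the radius. In particular $B\bigl(\overline{x},R\bigr)$ in the $\rho$-sense sits inside some $\rho^*$-ball of radius $\sim R$ and vice versa, and $\mu$-measures are locally comparable.

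First, I would verify that the hypotheses on the kernel $K$, originally stated with respect to $\rho$, imply the corresponding hypotheses with respect to $\rho^*$ (with possibly different constants, depending on $n$). For the standard estimate (\ref{standard 1}), the numerator changes by at most a factor $(1+A_{n+1})^\nu$ and the denominator by the local doubling applied to $B(x,\rho(x,y))$ versus $B(x,\rho^*(x,y))$, since these balls have comparable radii and the radii are $\leq \varepsilon_n$ on the support of $\widetilde{K}$. The H\"older-type estimate (\ref{standard 2}) is handled similarly, noting that $\rho(x_0,y) > M \rho(x_0,x)$ with $M$ large implies $\rho^*(x_0,y) > M' \rho^*(x_0,x)$ for a slightly smaller but still sufficiently large $M'$. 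The cancellation (\ref{standard 3}) and convergence conditions are stated already with the flexibility of an \emph{arbitrary} quasidistance $\rho'$ equivalent to $\rho$ in $\Omega_{n+1}$; since $\rho^*$ is such a quasidistance, these hypotheses transfer verbatim.

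Next, with cutoff functions $a,b$ still built via the Macias--Segovia construction applied to $\rho^*$ in $\Omega_{n+1}$ (which yields H\"older-continuous cutoffs adapted to $\rho^*$-balls, hence also to $\rho$-balls up to a radius rescaling), one redefines $\widetilde{K}$ and considers the operator $T$ with integration over $\rho^*$-balls (or equivalently $\rho'$-balls). By Proposition \ref{Prop check assumptions} applied in the symmetric locally homogeneous space $(\Omega,\{\Omega_n\},\rho^*,\mu)$, the new kernel $\widetilde{K}$ satisfies the standard estimates, cancellation, and convergence conditions globally on $\Omega_{n+1}\times\Omega_{n+1}$ in the symmetrized framework. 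Then Theorem \ref{Theorem L^p C^eta} (already proved in the symmetric setting) yields the $L^p$ boundedness, the weak $(1,1)$ estimate, and the $C^\eta$ estimate over $\rho^*$-balls. Translating back via the equivalence $\rho \asymp \rho^*$ on $\Omega_{n+1}$, we obtain the same conclusions for $\rho$-balls $B(\overline{x},R)$, with the constant $H$ and the H\"older exponents adjusted by factors depending only on $n$.

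The main obstacle is bookkeeping: one must check that every constant appearing in the conclusion still depends only on $n$ and the structural constants of $K$, and not on $R$. This is ensured because all conversions between $\rho$ and $\rho^*$ (radius rescaling, doubling comparison, H\"older exponent of the Macias--Segovia metric) use only the parameters $A_n, B_n, C_n$ of $\Omega_n, \Omega_{n+1}, \Omega_{n+2}$, never the radius $R$. The subsidiary technical point to verify is that the H\"older exponent $\alpha$ of the cutoffs adapted to $\rho^*$ is still comparable to that of the cutoffs adapted to $\rho$, which follows from the Macias--Segovia construction applied to both $\rho$ and $\rho^*$ (their quasi-triangle constants differ by a bounded factor).
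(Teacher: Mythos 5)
Your proposal is correct and follows essentially the same route as the paper: reduce to the symmetric case by replacing $\rho$ with the symmetrization $\rho^*(x,y)=\rho(x,y)+\rho(y,x)$, verify that the kernel hypotheses, cutoff properties, and $C^\eta$ spaces are preserved under passage to the equivalent quasidistance $\rho^*$ on each $\Omega_n$, and then invoke Theorem \ref{Theorem L^p C^eta} in the symmetric setting. Your extra bookkeeping (explicit equivalence bound $\rho\leq\rho^*\leq(1+A_n)\rho$, rescaling of radii, and the remark that the Macias--Segovia exponent for $\rho^*$ is controlled by $A_n,B_n$) only fills in detail that the paper compresses into the phrase ``standard computation''.
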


\begin{proof}
The key observation is that if the kernel $K$ satisfies conditions (i), (ii),
(iii) in Proposition \ref{Prop check assumptions} with respect to $\rho,$ it
also satisfies them with respect to any equivalent function, in particular
with respect to $\rho^{\ast};$ this follows by a standard computation, and
implies the validity of $L^{p}$ estimates, by Theorem \ref{Theorem L^p C^eta}.
As to $C^{\eta}$ estimates, let us first note that $\rho$ and $\rho^{\ast}$
define the same space $C^{\eta},$ with equivalent norms. Moreover, if $K$
satisfies the condition
\[
\widetilde{h}\left(  x\right)  \equiv\lim_{\varepsilon\rightarrow0}\int
_{\rho^{\prime}(x,y)>\varepsilon}\widetilde{K}(x,y)d\mu(y)\in C^{\gamma
}\left(  \Omega_{n+1}\right)
\]
for some $\gamma>0$ and some $\rho^{\prime}$ equivalent to $\rho,$ this
$\rho^{\prime}$ is also equivalent to $\rho^{\ast},$ so $\widetilde{h}$
satisfies the H\"{o}lder continuity assumption required by Theorem
\ref{Theorem L^p C^eta}, hence H\"{o}lder estimates hold.
\end{proof}

A similar argument shows that Theorems \ref{frac lp-lq} and
\ref{Thm frac C^eta} still holds if $\left(  \Omega,\left\{  \Omega
_{n}\right\}  _{n=1}^{\infty},\rho,\mu\right)  $ is a quasisymmetric locally
homogeneous space.

To deal with commutators we first need to make the following remark about
$BMO$ spaces.

Let us denote by $B_{r},\widetilde{B}_{r}$ the balls with respect to any two
equivalent functions $\rho,\widetilde{\rho}$ satisfying the axioms of
quasisymmetric locally doubling spaces. Then for any $x_{0}\in\Omega_{n}%
,r\leq\varepsilon_{n},$ any $\tau\in\mathbb{R},$ we can write, by the
equivalence of $\rho,\widetilde{\rho}$%
\[
\frac{1}{\left\vert B_{r}\left(  x_{0}\right)  \right\vert }\int_{B_{r}\left(
x_{0}\right)  }\left\vert u\left(  x\right)  -\tau\right\vert d\mu\left(
x\right)  \leq\frac{1}{\left\vert \widetilde{B}_{c_{1}r}\left(  x_{0}\right)
\right\vert }\int_{\widetilde{B}_{c_{2}r}\left(  x_{0}\right)  }\left\vert
u\left(  x\right)  -\tau\right\vert d\mu\left(  x\right)
\]
by the local doubling condition%
\[
\leq\frac{c_{3}}{\left\vert \widetilde{B}_{c_{2}r}\left(  x_{0}\right)
\right\vert }\int_{\widetilde{B}_{c_{2}r}\left(  x_{0}\right)  }\left\vert
u\left(  x\right)  -\tau\right\vert d\mu\left(  x\right)  .
\]
Choosing $\tau=\frac{1}{\left\vert \widetilde{B}_{c_{2}r}\left(  x_{0}\right)
\right\vert }\int_{\widetilde{B}_{c_{2}r}\left(  x_{0}\right)  }u\left(
x\right)  d\mu\left(  x\right)  $ and recalling that for any $\tau$ we have%
\[
\frac{1}{\left\vert B_{r}\left(  x_{0}\right)  \right\vert }\int_{B_{r}\left(
x_{0}\right)  }\left\vert u\left(  x\right)  -u_{B_{r}\left(  x_{0}\right)
}\right\vert d\mu\left(  x\right)  \leq2\cdot\frac{1}{\left\vert B_{r}\left(
x_{0}\right)  \right\vert }\int_{B_{r}\left(  x_{0}\right)  }\left\vert
u\left(  x\right)  -\tau\right\vert d\mu\left(  x\right)
\]
we get the equivalence between the norms $\left\Vert u\right\Vert
_{BMO_{loc}\left(  \Omega_{n},\Omega_{n+1}\right)  }$ with respect to $\rho$
and $\widetilde{\rho},$ and an analogous equivalence between $VMO_{loc}$
moduli. Applying this argument to the quasisymmetric function $\rho$ and its
symmetrized $\rho^{\ast}$ we immediately get that also the commutator theorems
\ref{Thm commutator}, \ref{Thm comm frac}, \ref{Thm comm pos} still hold if
$\left(  \Omega,\left\{  \Omega_{n}\right\}  _{n=1}^{\infty},\rho,\mu\right)
$ is a quasisymmetric locally homogeneous space.

Finally, the extension of Theorem \ref{Thm maximal} to the setting of
quasisymmetric locally homogeneous spaces is immediate, since the maximal
functions defined with respect to equivalent quasisymmetric quasidistances are
equivalent, hence the result in the symmetric case implies that for the
quasisymmetric case.

\section{Appendix. Known results for spaces of homogeneous
type\label{section appendix}}

In this Appendix we collect all the results about spaces of homogeneous type
which we have applied throughout the paper. We first recall the basic

\begin{definition}
Let $X$ be a set endowed with a function $\rho:X\times X\rightarrow
\lbrack0,\infty)$ such that for some constant $B\geq1$, any $x,y,z\in X$:

$\rho\left(  x,y\right)  =0\Longleftrightarrow x=y;$

$\rho\left(  x,y\right)  =\rho\left(  y,x\right)  ;$

$\rho\left(  x,y\right)  \leq B\left(  \rho\left(  x,z\right)  +\rho\left(
z,y\right)  \right)  .$

Assume that the $\rho$-balls are open with respect to the topology they
induce. Let $\mu$ be a positive Borel measure on $X$, satisfying the doubling
condition%
\[
0<\mu\left(  B\left(  x,2r\right)  \right)  \leq C\mu\left(  B\left(
x,r\right)  \right)  <\infty
\]
for any $x\in X,r>0.$ Then we say that $\left(  X,\rho,\mu\right)  $ is a
space of homogeneous type.
\end{definition}

\textbf{Dependence of the constants. }We will say that some constant depends
on $X$ to say that it depends on the constants $B,C.$

\subsection{$L^{p}$ and $C^{\alpha}$ estimates for singular integrals on
spaces of homogeneous type}

\begin{theorem}
[$L^{p}$ continuity of singular integrals]\label{sin lp}Let $\left(
X,\rho,\mu\right)  $ be a homogeneous space, $\mu$ a regular measure. Let
$K:X\times X\backslash\left\{  x=y\right\}  \rightarrow\mathbb{R}$ a kernel
satisfying the following conditions:

the standard estimates (\ref{standard 1}) with $\nu=0$, for any $x,y\in X$,
and (\ref{standard 2}), for any $x_{0},x,y\in X,$ with $\rho\left(
x_{0},y\right)  \geq M\rho\left(  x_{0},x\right)  ,M>1,\nu=0,\beta>0;$

the cancellation property%
\[
\left\vert \int_{r\leq\rho^{\prime}\left(  x,y\right)  \leq R}K\left(
y,x\right)  d\mu\left(  y\right)  \right\vert +\left\vert \int_{r\leq
\rho^{\prime}\left(  x,y\right)  \leq R}K\left(  x,y\right)  d\mu\left(
y\right)  \right\vert \leq C
\]
for any $R>r>0,x\in X,$ where $\rho^{\prime}$ is any quasidistance equivalent
to $\rho$. Let $T_{\varepsilon}$ be the truncated operator defined as%
\[
T_{\varepsilon}f=\int_{\rho^{\prime}\left(  x,y\right)  \geq\varepsilon
}K\left(  x,y\right)  f\left(  y\right)  d\mu\left(  y\right)
\]
for any $f\in C_{0}^{\eta}\left(  X\right)  $ (with $\eta$ small enough so
that $C_{0}^{\eta}\left(  X\right)  $ is dense in $L^{p}$ for $p\in
\lbrack1,\infty)$). Then $T_{\varepsilon}$ can be extended to a linear
continuous operator from $L^{p}$\ into $L^{p}$ for every $p\in\left(
1,\infty\right)  $, and%
\[
\left\Vert T_{\varepsilon}f\right\Vert _{L^{p}}\leq c\left\Vert f\right\Vert
_{L^{p}}%
\]
where the constant $c$ depends on $X$, $p$ and all the constants involved in
the assumptions, but not on $\varepsilon$. Moreover, if for a.e. $x\in X$
there exists the limit%
\[
\lim_{\varepsilon\rightarrow0}\int_{\rho^{\prime}\left(  x,y\right)
\geq\varepsilon}K\left(  x,y\right)  d\mu\left(  y\right)  ,
\]
then the above $L^{p}$ estimate holds also for the operator%
\[
Tf\left(  x\right)  =\lim_{\varepsilon\rightarrow0}\int_{\rho^{\prime}\left(
x,y\right)  \geq\varepsilon}K\left(  x,y\right)  f\left(  y\right)
d\mu\left(  y\right)  .
\]
Finally, the operator $T$ satisfies a weak $\left(  1,1\right)  $-estimate:%
\[
\mu\left(  \left\{  x\in X:\left\vert Tf\left(  x\right)  \right\vert
>t\right\}  \right)  \leq\frac{c}{t}\left\Vert f\right\Vert _{L^{1}\left(
X\right)  }.
\]

\end{theorem}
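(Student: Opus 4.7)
The plan is to run the classical Calder\'on-Zygmund machinery adapted to spaces of homogeneous type, in the now standard form due to Coifman-Weiss. I would proceed in four stages: uniform $L^{2}$ bounds for the truncations $T_{\varepsilon }$; a weak $(1,1)$ bound via a Calder\'on-Zygmund decomposition; Marcinkiewicz interpolation together with duality to cover $p\in (1,\infty )$; and a density/limit argument to pass from $T_{\varepsilon }$ to $T$.

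For the $L^{2}$ step I would use a Cotlar-Stein almost-orthogonality argument. Fix $\delta >1$ and, for $k\in \mathbb{Z}$, let $T_{k}$ be the piece obtained by integrating $K(x,y)$ over the $\rho ^{\prime }$-annulus $\delta ^{k}\leq \rho ^{\prime }(x,y)<\delta ^{k+1}$. The size estimate (\ref{standard 1}) together with the doubling condition yields $\left\Vert T_{k}\right\Vert _{L^{1}\rightarrow L^{1}}$, $\left\Vert T_{k}\right\Vert _{L^{\infty }\rightarrow L^{\infty }}\leq C$ uniformly in $k$ and $\varepsilon $, hence $\left\Vert T_{k}\right\Vert _{L^{2}\rightarrow L^{2}}\leq C$ by interpolation. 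A careful use of the H\"older regularity (\ref{standard 2}) together with the cancellation (\ref{standard 3}) then produces a decay bound of the form $\left\Vert T_{j}^{\ast }T_{k}\right\Vert _{L^{2}\rightarrow L^{2}}+\left\Vert T_{j}T_{k}^{\ast }\right\Vert _{L^{2}\rightarrow L^{2}}\lesssim \delta ^{-\beta |j-k|}$, from which the Cotlar-Stein lemma gives a uniform bound on any finite partial sum, and therefore on $T_{\varepsilon }$ itself. This is the technical heart of the argument.

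Once the $L^{2}$ bound is in hand, the weak $(1,1)$ estimate follows by the usual route. Decompose $f=g+\sum _{j}b_{j}$ at height $t$ via a Calder\'on-Zygmund stopping-time argument on the maximal function: $\left\Vert g\right\Vert _{\infty }\lesssim t$, the $b_{j}$ are mean zero and supported on pairwise disjoint balls $B_{j}=B(x_{j},r_{j})$, and $\sum \mu (B_{j})\lesssim \left\Vert f\right\Vert _{1}/t$. The good part $g$ is controlled by the $L^{2}$ bound and Chebyshev. For the bad part, off the enlarged balls $B(x_{j},Mr_{j})$ the cancellation of $b_{j}$ combined with the regularity (\ref{standard 2}) applied at the center of $B_{j}$ yields $\int _{X\setminus B(x_{j},Mr_{j})}|Tb_{j}|\,d\mu \lesssim \left\Vert b_{j}\right\Vert _{1}$, while the exceptional set $\bigcup _{j}B(x_{j},Mr_{j})$ has total measure $\lesssim \left\Vert f\right\Vert _{1}/t$ by doubling. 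Marcinkiewicz interpolation between weak $(1,1)$ and $L^{2}\rightarrow L^{2}$ then gives $L^{p}$ for $1<p\leq 2$.

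For $p>2$, the adjoint kernel $K(y,x)$ satisfies the same set of hypotheses, thanks to the symmetric form of (\ref{standard 2}) and (\ref{standard 3}); the same machinery applied to $T_{\varepsilon }^{\ast }$ gives the $L^{p^{\prime }}$ bound for the adjoint, and duality closes the range. Finally, where the pointwise limit exists, the uniform bound on $T_{\varepsilon }$ together with the density of $C_{0}^{\eta }$ in $L^{p}$ (Proposition \ref{Prop density}) and a routine dominated-convergence/Banach-Steinhaus step extend the estimate to $T$; the weak $(1,1)$ bound for $T$ is obtained in the same way. The main obstacle is the $L^{2}$ step, and in particular verifying the Cotlar-Stein decay for a kernel whose cancellation is only assumed with respect to an auxiliary quasidistance $\rho ^{\prime }$ equivalent to $\rho $: one must carefully compare $\rho $-annuli with $\rho ^{\prime }$-annuli, and absorb the resulting distortion into the doubling constants.
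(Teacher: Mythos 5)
The paper does not prove Theorem \ref{sin lp}: it is stated in the Appendix as a known fact, with pointers to \cite{C}, \cite{CW}, and \cite[Thm.\ 4.1, Thm.\ 4.5]{BC1}, so there is no in-paper proof to compare against; I will therefore assess your sketch on its own terms. Your overall architecture (uniform $L^{2}$ bound for the truncations, Calder\'on-Zygmund decomposition for weak $(1,1)$, Marcinkiewicz interpolation, duality for $p>2$, density/limit step to pass from $T_{\varepsilon}$ to $T$) is the standard one and is what the cited sources use, and the CZ decomposition, interpolation, duality and limiting portions of your plan are sound.

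The gap is in the $L^{2}$ step. The cancellation hypothesis only says that the integrals of $K$ over $\rho^{\prime}$-annuli are \emph{bounded}, not zero, and under merely bounded cancellation the Cotlar-Stein decay $\left\Vert T_{j}^{\ast}T_{k}\right\Vert _{L^{2}\rightarrow L^{2}}\leq C\,\delta^{-\beta\left\vert j-k\right\vert }$ does not follow from the size and H\"older estimates. Indeed, when one annulus is much finer than the other, the kernel of $T_{j}^{\ast}T_{k}$ is, up to an error that does decay geometrically by (\ref{standard 2}), the coarse-scale kernel evaluated at $(x,y)$ times the average of $K$ over the fine annulus; that average is only $O(1)$ by (\ref{standard 3}), so this main term contributes an operator of norm $O(1)$ with no gain in $\left\vert j-k\right\vert$. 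To close the argument you must do one of two things: (i) split $T_{k}=T_{k}^{0}+\sigma_{k}$ where the annulus piece of $T_{k}^{0}$ has zero mean (these obey the Cotlar-Stein decay), and control $\sum_{k}\sigma_{k}$ directly, using that cancellation over arbitrary $r<R$ makes the \emph{partial sums} of the annulus averages bounded even though the individual averages need not be small; or (ii) abandon Cotlar-Stein and invoke the $T(1)$ theorem: the cancellation yields $T(1),T^{\ast}(1)\in L^{\infty}\subset BMO$, the size and regularity give the weak boundedness property, and $T(1)$ delivers the $L^{2}$ bound. Route (ii) is the one implicit in the paper's citation of Christ \cite{C}. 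The $\rho$-versus-$\rho^{\prime}$ distortion you flag at the end is real but routine once the $L^{2}$ step is repaired, since the equivalence constant only inflates annuli by a fixed factor that is absorbed into the doubling constants.
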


The above result follows, for instance, from the results contained in \cite{C}
and \cite{CW}; see also \cite[Thm. 4.1, Thm. 4.5]{BC1} where this theorem is
explicitly derived from the aforementioned results.

\begin{theorem}
[$C^{\alpha}$ continuity of singular integrals]\label{sin holder}(See
\cite[Thm. 2.7]{BB4}). Let $\left(  X,\rho,\mu\right)  $ be a bounded
homogeneous space, and let \thinspace$K$ $:X\times X\backslash\left\{
x=y\right\}  \rightarrow%
\mathbb{R}
$ a kernel satisfying the following conditions:

the standard estimate (\ref{standard 1}) with $\nu=0,$ any $x,y\in X,$ and
(\ref{standard 2}) with $\nu=0$, for any $x_{0},x,y\in X,$ with $\rho\left(
x_{0},y\right)  \geq M\rho\left(  x_{0},x\right)  ,M>1,\beta>0;$

the cancellation property: for any $r>0$%
\[
\left\vert \int_{\rho^{\prime}\left(  x,y\right)  \geq r}K\left(  y,x\right)
d\mu\left(  y\right)  \right\vert +\left\vert \int_{\rho^{\prime}\left(
x,y\right)  \geq r}K\left(  x,y\right)  d\mu\left(  y\right)  \right\vert \leq
C,
\]
where $\rho^{\prime}$ is any quasidistance on $X$, equivalent to $\rho$.
Assume that for every $f\in C^{\alpha}(X)$ and $x\in X$ the following limit
exists:%
\[
Tf\left(  x\right)  =\lim_{\varepsilon\rightarrow0}T_{\varepsilon}f\left(
x\right)  =\lim_{\varepsilon\rightarrow0}\int_{\rho^{\prime}\left(
x,y\right)  \geq\varepsilon}K\left(  x,y\right)  f\left(  y\right)  dy
\]
and $T\left(  1\right)  \left(  x\right)  \in C^{\eta}(X)$, for some $\eta
\in(0,1]$ Then the operator $T$ is continuous on $C^{\alpha}(X)$; more
precisely:%
\[
\left\vert Tf\right\vert _{C^{\alpha}(X)}\leq c\left\Vert f\right\Vert
_{C^{\alpha}(X)}\text{ for every }\alpha<\beta,\alpha\leq\eta
\]
for some constant $c$ depending on $X,\alpha,$ and all the constants involved
in the assumptions (recall $\beta$ is the exponent appearing in assumption
(\ref{standard 2})). Moreover,%
\[
\left\Vert Tf\right\Vert _{L^{\infty}}\leq c\left\Vert f\right\Vert
_{C^{\alpha}(X)},
\]
where $c$ also depending on diam$X$.
\end{theorem}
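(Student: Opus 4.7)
The plan is to establish the pointwise H\"older difference $|Tf(x_1)-Tf(x_2)|\le c\,r^{\alpha}\|f\|_{C^{\alpha}(X)}$ for any $x_1,x_2\in X$ with $\rho(x_1,x_2)=r$, together with the separate $L^{\infty}$ bound. The organizing identity is the Calder\'on-type decomposition
\[
Tf(x_1)-Tf(x_2)=T\bigl(f-f(x_1)\bigr)(x_1)-T\bigl(f-f(x_1)\bigr)(x_2)+f(x_1)\bigl[T(1)(x_1)-T(1)(x_2)\bigr],
\]
which is legitimate because $Tf(x_i)$ exists as a principal value and $T(1)\in C^{\eta}(X)$ by hypothesis. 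The last term is immediately bounded by $\|f\|_{L^{\infty}}\,|T(1)|_{C^{\eta}}\,r^{\eta}$, hence by $c\|f\|_{C^{\alpha}}r^{\alpha}$ once one invokes $\alpha\le\eta$ and the boundedness of $X$.

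For the remaining difference set $g=f-f(x_1)$ and fix $M$ large enough to exceed both the constant in (\ref{standard 2}) and a suitable multiple of $B$. Split $g=g_{1}+g_{2}$ with $g_{1}=g\,\chi_{B(x_1,Mr)}$. For the near part, control $|Tg_{1}(x_{i})|$ for $i=1,2$ directly by combining the size bound (\ref{standard 1}) with the key cancellation $|g(y)|\le [f]_{C^{\alpha}}\rho(x_1,y)^{\alpha}$; a standard annular decomposition in the homogeneous space gives
\[
\int_{B(x_1,Mr)}\frac{\rho(x_1,y)^{\alpha}}{\mu(B(x_1,\rho(x_1,y)))}\,d\mu(y)\le c\,r^{\alpha},
\]
and the corresponding estimate at $x_2$ follows by $\rho(x_1,y)\le B(r+\rho(x_2,y))$ together with $\rho(x_2,y)\le c\,r$ on the enlarged ball. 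For the far part, note that $y\notin B(x_1,Mr)$ forces $\rho(x_1,y)>M\rho(x_1,x_2)$, so the H\"older estimate (\ref{standard 2}) yields
\[
|Tg_{2}(x_1)-Tg_{2}(x_2)|\le c\,[f]_{C^{\alpha}}\,r^{\beta}\int_{\rho(x_1,y)>Mr}\frac{\rho(x_1,y)^{\alpha-\beta}}{\mu(B(x_1,\rho(x_1,y)))}\,d\mu(y);
\]
a dyadic decomposition of the outer region produces a geometric sum with ratio $2^{\alpha-\beta}<1$ (using $\alpha<\beta$), summing to $c(Mr)^{\alpha-\beta}$, and multiplication by $r^{\beta}$ gives exactly $c\,r^{\alpha}[f]_{C^{\alpha}}$.

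The $L^{\infty}$ estimate is obtained by the same decomposition $Tf(x)=T(f-f(x))(x)+f(x)T(1)(x)$: the first summand is bounded via the size estimate of $K$ and $|f(y)-f(x)|\le[f]_{C^{\alpha}}\rho(x,y)^{\alpha}$, integrating over $X$ of finite diameter, while $|f(x)T(1)(x)|\le\|f\|_{L^{\infty}}\|T(1)\|_{L^{\infty}}$ and $\|T(1)\|_{L^{\infty}}$ is finite because $T(1)\in C^{\eta}(X)$ on a bounded space. The main obstacle is the bookkeeping of the near part: a priori, neither $T(f-f(x_1))(x_i)$ nor $T(1)(x_i)$ is separately an absolutely convergent integral, and one must justify their splitting through the assumed principal-value existence of $Tf$ and the cancellation property; once this is secured, the H\"older regularity of $f$ supplies precisely the extra power $\rho(x_1,y)^{\alpha}$ that tames the nonintegrable singularity of $K$, and every remaining step reduces to a routine dyadic estimate in the homogeneous space.
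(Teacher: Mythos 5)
Your strategy — the Calder\'on-type identity $Tf(x_1)-Tf(x_2)=T(f-f(x_1))(x_1)-T(f-f(x_1))(x_2)+f(x_1)[T(1)(x_1)-T(1)(x_2)]$, a near/far split around $x_1$ at scale $Mr$, the $(\alpha-\beta)$-dyadic sum for the far part, and the $L^\infty$ bound via $T(f-f(x))(x)+f(x)T(1)(x)$ — is the right skeleton. But there is a genuine gap in the estimate of the near part evaluated at the \emph{second} point $x_2$.

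You propose to bound $|Tg_1(x_2)|$ using only $|g(y)|\le[f]_{C^\alpha}\rho(x_1,y)^\alpha$ together with the size estimate for $K(x_2,\cdot)$, and then to transfer the estimate from $x_1$ to $x_2$ via $\rho(x_1,y)\le B(r+\rho(x_2,y))$. This does not close. Near $y=x_2$, the factor $\rho(x_1,y)^\alpha$ stays of size $\approx r^\alpha$ and provides no decay, while $|K(x_2,y)|\le A/\mu(B(x_2,\rho(x_2,y)))$ has a genuinely nonintegrable singularity: summing over dyadic annuli $\{2^{-k-1}\delta\le\rho(x_2,y)<2^{-k}\delta\}$, each shell contributes a constant, so
\[
\int_{B(x_2,\delta)}\frac{d\mu(y)}{\mu(B(x_2,\rho(x_2,y)))}=+\infty
\]
unless $x_2$ is an atom. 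Thus the integral you need, $\int_{B(x_1,Mr)}|K(x_2,y)|\,\rho(x_1,y)^\alpha\,d\mu(y)$, diverges. The inequality $\rho(x_1,y)^\alpha\le c(r^\alpha+\rho(x_2,y)^\alpha)$ only cures the piece carrying $\rho(x_2,y)^\alpha$; the $r^\alpha$ piece still multiplies a divergent integral. Your closing paragraph gestures at invoking the cancellation property to ``secure'' the splitting, but the estimate you actually wrote never uses cancellation for this term.

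The missing step is the standard further subtraction inside the near ball: write
\[
\int_{B(x_1,Mr)}K(x_2,y)\,g(y)\,d\mu(y)=\int_{B(x_1,Mr)}K(x_2,y)\bigl[g(y)-g(x_2)\bigr]\,d\mu(y)+g(x_2)\,\mathrm{p.v.}\!\int_{B(x_1,Mr)}K(x_2,y)\,d\mu(y).
\]
The first integral converges absolutely because $|g(y)-g(x_2)|\le[f]_{C^\alpha}\rho(x_2,y)^\alpha$ supplies decay \emph{centered at $x_2$}, and integrating over $\rho(x_2,y)\lesssim r$ gives $c[f]_{C^\alpha}r^\alpha$. For the second term, $|g(x_2)|\le[f]_{C^\alpha}r^\alpha$, and the principal value is bounded by a constant \emph{independent of $r$} by combining the cancellation hypothesis (which controls $\mathrm{p.v.}\int_{\rho'(x_2,y)\le c r}K(x_2,y)\,d\mu(y)$ after differencing two outer integrals) with a size-estimate bound on the symmetric difference $B(x_1,Mr)\triangle B_{\rho'}(x_2,cr)$, which is an annular region where $\rho(x_2,y)\asymp r$ and hence $\int|K(x_2,y)|\,d\mu(y)=O(1)$ by doubling. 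Without this subtraction-and-cancellation step, the near-part estimate at $x_2$ is not merely incomplete — it is false as stated. The rest of your argument (far part via (\ref{standard 2}) and $\alpha<\beta$, near part at $x_1$, the $T(1)$ term, and the $L^\infty$ bound) is sound.
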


\subsection{$L^{p}$ and $C^{\alpha}$ estimates for fractional integrals on
spaces of homogeneous type}

\begin{theorem}
[$L^{p}$ estimate for fractional integrals]\label{frac lp}Let $\left(
X,\rho,\mu\right)  $ be a space of homogeneous type, and for $\alpha\in\left(
0,1\right)  ,$ let%
\[
0\leq K_{\alpha}\left(  x,y\right)  \leq\frac{C}{\mu\left(  B\left(
x,\rho\left(  x,y\right)  \right)  \right)  ^{1-\alpha}}\text{ for }x\neq
y,K_{\alpha}\left(  x,x\right)  =0
\]%
\[
I_{\alpha}f\left(  x\right)  =\int_{X}K_{\alpha}\left(  x,y\right)  f\left(
y\right)  d\mu\left(  y\right)
\]
for any measurable $f:X\rightarrow\mathbb{R}$ for which the integral makes
sense. Then, for any $p\in\left(  1,\frac{1}{\alpha}\right)  ,\frac{1}%
{q}=\frac{1}{p}-\alpha$ there exists a constant depending on $X,\alpha,p$ and
the constant $C$ in the assumptions on $K_{\alpha},$ such that%
\[
\left\Vert I_{\alpha}f\right\Vert _{L^{q}\left(  X\right)  }\leq c\left\Vert
f\right\Vert _{L^{p}\left(  X\right)  }%
\]
for any $f\in L^{p}\left(  X\right)  .$
\end{theorem}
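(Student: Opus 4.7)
The plan is to prove this by the classical Hedberg-type argument: derive a pointwise inequality
\[
|I_\alpha f(x)| \leq C\bigl(Mf(x)\bigr)^{p/q}\|f\|_{L^p(X)}^{1-p/q},
\]
where $M$ is the Hardy--Littlewood maximal operator on the space of homogeneous type $(X,\rho,\mu)$, and then take $L^q$ norms and invoke the $L^p$-boundedness of $M$ (which is a standard result valid in any space of homogeneous type, proved via a Vitali-type covering lemma as in the theory recalled in the appendix).

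The core estimate is obtained by splitting, for each $x$ and any $r>0$,
\[
I_\alpha f(x) = \int_{\rho(x,y)<r} K_\alpha(x,y)f(y)\,d\mu(y) + \int_{\rho(x,y)\geq r} K_\alpha(x,y)f(y)\,d\mu(y) \equiv J_1(x,r)+J_2(x,r).
\]
For $J_1$, I decompose the ball $B(x,r)$ into dyadic annuli $A_k=\{2^{-k-1}r\leq\rho(x,y)<2^{-k}r\}$; on $A_k$ the kernel is controlled by $C/\mu(B(x,2^{-k-1}r))^{1-\alpha}$, and the local average of $|f|$ on $B(x,2^{-k}r)$ is bounded by $Mf(x)$ times $\mu(B(x,2^{-k}r))$. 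Using the doubling condition to compare $\mu(B(x,2^{-k}r))$ with $\mu(B(x,2^{-k-1}r))$ and a telescoping argument with $a_k:=\mu(B(x,2^{-k}r))$ of the form $a_k^\alpha-a_{k+1}^\alpha \leq C \int_{a_{k+1}}^{a_k} t^{\alpha-1}\,dt$, I arrive at
\[
|J_1(x,r)| \leq C\,\mu(B(x,r))^\alpha\,Mf(x).
\]
For $J_2$, I apply H\"{o}lder's inequality with exponent $p$ and compute, by an analogous dyadic decomposition of the exterior $\{\rho(x,y)\geq r\}$ combined with a telescoping estimate (this time converging at infinity because the constraint $p<1/\alpha$ is exactly what makes $1-(1-\alpha)p'<0$),
\[
\left(\int_{\rho(x,y)\geq r} K_\alpha(x,y)^{p'}\,d\mu(y)\right)^{1/p'} \leq C\,\mu(B(x,r))^{\alpha-1/p},
\]
whence $|J_2(x,r)|\leq C\,\mu(B(x,r))^{\alpha-1/p}\,\|f\|_{L^p(X)}$.

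I then optimize in $r$ by choosing $r=r(x)$ so that the two bounds balance, i.e.\ $\mu(B(x,r))^{1/p} \simeq \|f\|_{L^p(X)}/Mf(x)$. A minor technical point is that $\mu(B(x,\cdot))$ is not continuous in general, so the optimal $r$ may not be attained; this is standard to handle by taking an infimizing sequence, or by allowing a harmless multiplicative loss, since the upper doubling condition ensures that the function $r\mapsto \mu(B(x,r))$ does not skip multiplicative scales in an uncontrolled way. The resulting pointwise bound then reads $|I_\alpha f(x)|\leq C\,(Mf(x))^{p/q}\,\|f\|_{L^p(X)}^{1-p/q}$, and integrating its $q$-th power together with the $L^p\to L^p$ boundedness of $M$ gives $\|I_\alpha f\|_{L^q}\leq C\|f\|_{L^p}^{p/q}\cdot\|f\|_{L^p}^{1-p/q}=C\|f\|_{L^p}$.

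The step I expect to be most delicate is the telescoping estimate for $J_2$: unlike the Euclidean case, one has no \emph{a priori} reverse doubling estimate guaranteeing that $\mu(B(x,2^{k+1}r))/\mu(B(x,2^kr))$ is bounded away from $1$, so the geometric-series argument must be replaced by the integral-comparison trick mentioned above, which uses only the (upper) doubling hypothesis and the strict inequality $(1-\alpha)p'>1$. Everything else reduces to well-established facts from \cite{CW} (covering lemma, $L^p$-boundedness of $M$, density of nice functions in $L^p$, regularity of $\mu$), which hold in any space of homogeneous type and in particular in $(X,\rho,\mu)$.
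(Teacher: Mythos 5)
The paper does not prove this theorem: it is stated in the Appendix as a known result, attributed to Gatto--V\'agi \cite{GV1}, \cite{GV2}, with a remark that those references assume the absence of atoms but that the general case can ``easily'' be obtained. So there is no in-paper proof to compare against. Your Hedberg-type argument (pointwise bound $|I_\alpha f(x)|\lesssim (Mf(x))^{p/q}\|f\|_{L^p}^{1-p/q}$ followed by the $L^p$-boundedness of the maximal function) is a correct and standard route, and it does handle the general case directly.

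Two steps in your sketch need sharpening. For $J_1$: the centered maximal function only gives $\int_{A_k}|f|\leq Mf(x)\,\mu(B(x,2^{-k}r))=Mf(x)\,a_k$, not $Mf(x)\,\mu(A_k)$; substituting this yields $\sum_k a_{k+1}^{\alpha-1}a_k\lesssim\sum_k a_k^\alpha$, which need not converge without reverse doubling. The inequality you invoke, $a_k^\alpha-a_{k+1}^\alpha\leq C\int_{a_{k+1}}^{a_k}t^{\alpha-1}\,dt$, is in fact an identity (with $C=1/\alpha$) and does not by itself repair this. What works is an Abel summation: with $F_k=\int_{B(x,2^{-k}r)}|f|$ one writes $\sum_k a_{k+1}^{\alpha-1}(F_k-F_{k+1})\leq a_1^{\alpha-1}F_0+\sum_{k\geq1}(a_{k+1}^{\alpha-1}-a_k^{\alpha-1})F_k$, bounds $F_k\leq a_kMf(x)$, and uses $(a_{k+1}^{\alpha-1}-a_k^{\alpha-1})\,a_k\leq C(a_k^\alpha-a_{k+1}^\alpha)$ (the mean-value form of the integral comparison combined with doubling) to telescope to $|J_1|\lesssim\mu(B(x,r))^\alpha Mf(x)$; a Fubini rewriting of $\mu(B(x,\rho(x,y)))^{\alpha-1}$ is an equivalent fix. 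For the optimization in $r$: beyond the multiplicative jump you already flag, you must also treat the cases where $V:=(\|f\|_{L^p}/Mf(x))^p$ falls outside the range $[\mu(\{x\}),\mu(X)]$ of $r\mapsto\mu(B(x,r))$---this is exactly where the no-atom hypothesis of the cited papers enters. Both cases are harmless. If $V<\mu(\{x\})$ (so $x$ is an atom), let $r\to0$; then $J_1\to0$ because $K_\alpha(x,x)=0$, and the $J_2$ bound with $\alpha-1/p<0$ and $\mu(\{x\})>V$ still gives the Hedberg quantity. If $V>\mu(X)$ (so $X$ is bounded), take $B(x,r)=X$ so that $J_2=0$ and the $J_1$ bound with $\mu(X)<V$ suffices. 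With these two additions, your proposal is a complete proof.
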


The above result is due to Gatto-Vagi, see \cite{GV1}, \cite{GV2}.

\begin{theorem}
[$C^{\alpha}$ estimate for fractional integrals]\label{Thm frac C^alfa}(See
\cite[Thm 2.11]{BB4}). Let $\left(  X,\rho,\mu\right)  $ be a bounded space of
homogeneous type, and let $K\left(  x,y\right)  $ be a kernel satisfying for
some $\nu\in\left(  0,1\right)  $ the standard estimates (\ref{standard 1})
for any $x,y\in X$, and (\ref{standard 2}), for any $x_{0},x,y\in X,$ with
$\rho\left(  x_{0},y\right)  \geq M\rho\left(  x_{0},x\right)  ,M>1,\beta>0;$
let
\[
I_{\nu}f\left(  x\right)  =\int_{X}K_{\nu}\left(  x,y\right)  f\left(
y\right)  d\mu\left(  y\right)  .
\]
Then, for any $\alpha<\min\left(  \beta\,,\nu\right)  $ we have%
\[
\left\Vert I_{\nu}f\right\Vert _{C^{\alpha}\left(  X\right)  }\leq c\left\Vert
f\right\Vert _{C^{\alpha}\left(  X\right)  }.
\]
The constant $c$ depends on $X,\alpha,$diam$X,$ and the constants involved in
the assumptions on $K.$
\end{theorem}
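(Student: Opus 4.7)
The plan is to prove, separately, the two estimates $\|I_\nu f\|_{L^\infty(X)} \leq c \|f\|_{L^\infty(X)}$ and $|I_\nu f(x_1) - I_\nu f(x_2)| \leq c \rho(x_1,x_2)^\alpha \|f\|_{L^\infty(X)}$ for every $x_1, x_2 \in X$. Since the $C^\alpha$ norm dominates the sup norm, combining these yields the desired bound $\|I_\nu f\|_{C^\alpha(X)} \leq c \|f\|_{C^\alpha(X)}$ (in fact a stronger statement, where only $\|f\|_\infty$ appears on the right). This reflects the smoothing nature of a fractional integral.

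For the sup-norm bound, at any $x \in X$ I apply (\ref{standard 1}) pointwise and integrate in $y$ using a dyadic annulus decomposition of $\{y \in X : \rho(x,y) \leq \mathrm{diam}\,X\}$ into shells $\{2^{-k-1}\mathrm{diam}\,X \leq \rho(x,y) < 2^{-k}\mathrm{diam}\,X\}$; the doubling condition turns each annular integral into $c(2^{-k}\mathrm{diam}\,X)^\nu$, and the geometric series sums to $c(\mathrm{diam}\,X)^\nu \|f\|_\infty$.

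For the Hölder estimate, fix $x_1,x_2 \in X$, set $r = \rho(x_1,x_2)$, and decompose
\[
I_\nu f(x_1) - I_\nu f(x_2) = \int_{B(x_1,Mr)} K(x_1,y) f(y)\, d\mu(y) - \int_{B(x_1,Mr)} K(x_2,y) f(y)\, d\mu(y) + \int_{X \setminus B(x_1,Mr)} [K(x_1,y) - K(x_2,y)] f(y)\, d\mu(y),
\]
with $M$ the constant from (\ref{standard 2}). The two "near" integrals are estimated exactly as in the sup-norm step, using (\ref{standard 1}) and dyadic decomposition of a ball of radius comparable to $r$ (passing from $x_1$ to $x_2$ by the quasi-triangle inequality, which costs only a fixed constant); each is bounded by $c r^\nu \|f\|_\infty$. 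The "far" integral is estimated using (\ref{standard 2}), giving pointwise integrand of size $c r^\beta \rho(x_1,y)^{\nu-\beta} / \mu(B(x_1,\rho(x_1,y))) \cdot \|f\|_\infty$; a dyadic decomposition of the annulus $\{Mr < \rho(x_1,y) \leq \mathrm{diam}\,X\}$ again produces a geometric series, whose sum is $\leq c r^{\nu-\beta}$ when $\nu < \beta$, $\leq c (\mathrm{diam}\,X)^{\nu-\beta}$ when $\nu > \beta$, and has a logarithmic factor $\log(\mathrm{diam}\,X / r)$ when $\nu = \beta$. Multiplying by $r^\beta$ yields $c r^{\min(\beta,\nu)}$ in the first two cases and $c r^\beta \log(\mathrm{diam}\,X / r)$ in the third; in every case, since $r \leq \mathrm{diam}\,X$ and $\alpha < \min(\beta,\nu)$, this is bounded by $c r^\alpha \|f\|_\infty$, absorbing the exponent gap (and the log) into a constant depending on $\mathrm{diam}\,X$.

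The main obstacle is the case analysis for the far-part estimate, particularly the verification that the borderline case $\nu = \beta$ does not destroy the $C^\alpha$ estimate and that the trade-off $r^{\min(\beta,\nu) - \alpha} \leq (\mathrm{diam}\,X)^{\min(\beta,\nu) - \alpha}$ is legitimate. Once these geometric-series computations are carried out, everything else is routine bookkeeping in the doubling constants.
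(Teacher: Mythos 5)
Your proposal is correct and follows the standard argument for Hölder estimates on fractional integrals: split $I_\nu f(x_1)-I_\nu f(x_2)$ into a near part over $B(x_1,Mr)$, controlled by the size bound (\ref{standard 1}) and a dyadic shell sum giving $c\,r^\nu\|f\|_\infty$, and a far part controlled by the regularity bound (\ref{standard 2}) and a second shell sum over $\{Mr<\rho(x_1,y)\leq \mathrm{diam}\,X\}$, then absorb the gap $\min(\beta,\nu)-\alpha>0$ (and the borderline logarithm when $\nu=\beta$) into a constant depending on $\mathrm{diam}\,X$. The paper itself does not give a proof here -- it cites \cite[Thm.~2.11]{BB4} and merely remarks that the result extends to spaces with atoms -- but your outline is essentially the argument one finds there, and the three-case bookkeeping you flag as the main obstacle is handled correctly.
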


The above two results about fractional integrals are proved in the quoted
papers under some additional assumptions on the space (e.g., the absence of
\textquotedblleft atoms\textquotedblright, that is points of positive
measure); however, these statements can be easily proved in full generality.

\subsection{Commutator theorems}

The original commutator theorem we are interested in is the one proved by
Coifman-Rochberg-Weiss \cite{CRW} for classical Calder\'{o}n-Zygmund
operators. The extension of this result to spaces of homogeneous type, both
bounded and unbounded, has been proved in \cite[Thm 2.5, Thm. 3.1]{BC1}:

\begin{theorem}
[Commutators of singular integrals]\label{Thm comm sing}Let $\left(
X,\rho,\mu\right)  $ be a homogeneous space and let all the assumptions of
Theorem \ref{sin lp} be in force. Let
\[
Tf\left(  x\right)  =\lim_{\varepsilon\rightarrow0}\int_{X,\rho^{\prime
}(x,y)>\varepsilon}K(x,y)f\left(  y\right)  d\mu(y)
\]
and, for $a\in BMO\left(  X\right)  $ let%
\[
C_{a}f\left(  x\right)  =T\left(  af\right)  \left(  x\right)  -a\left(
x\right)  Tf\left(  x\right)  .
\]
Then, for any $p\in\left(  1,\infty\right)  $%
\[
\left\Vert C_{a}f\right\Vert _{L^{p}\left(  X\right)  }\leq c\left\Vert
a\right\Vert _{BMO\left(  X\right)  }\left\Vert f\right\Vert _{L^{p}\left(
X\right)  }%
\]
for some constant $c$ depending on $X,p,$ and the constants involved in the
assumptions on $K,$ but not on $f,a.$
\end{theorem}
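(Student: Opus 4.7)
I would prove this by the standard sharp-maximal-function approach of Coifman-Rochberg-Weiss, transplanted to the homogeneous-space setting. The central step is to establish the pointwise inequality
\[
M^{\#}(C_a f)(x) \leq c\,\|a\|_{BMO(X)}\bigl[M_r(Tf)(x) + M_r(f)(x)\bigr]
\]
for some $r>1$, where $M^{\#}$ denotes the sharp maximal function on $X$, $M$ is the Hardy-Littlewood maximal operator, and $M_r g := M(|g|^r)^{1/r}$. Once this pointwise bound is in hand, the theorem follows at once from the Fefferman-Stein inequality $\|g\|_{L^p}\leq c\|M^{\#}g\|_{L^p}$ (valid on spaces of homogeneous type), the $L^{p/r}$-boundedness of $M$ (for $r$ chosen close enough to $1$ that $p/r>1$), and the already-assumed $L^p$-boundedness of $T$ from Theorem \ref{sin lp}.

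To prove the pointwise estimate, fix $x\in X$ and a ball $B=B(x_0,R)$ containing $x$, and fix an enlargement factor $\lambda>1$ large enough that the kernel regularity (\ref{standard 2}) applies whenever $y\in B$ and $w\in(\lambda B)^c$. Using the identity $T(af)=T((a-a_B)f)+a_B Tf$, one decomposes, for $y\in B$,
\[
C_a f(y) = -(a(y)-a_B)Tf(y) + T\bigl((a-a_B)f\chi_{\lambda B}\bigr)(y) + T\bigl((a-a_B)f\chi_{(\lambda B)^c}\bigr)(y),
\]
and takes as normalizing constant $c_B$ the value of the third (far) piece evaluated at $x_0$. The $B$-average of the first piece is bounded by $c\|a\|_{BMO}M_r(Tf)(x)$ via H\"older's inequality with exponents $r',r$ and the John-Nirenberg estimate $\bigl(\tfrac{1}{\mu(B)}\int_B|a-a_B|^{r'}\bigr)^{1/r'}\leq c\|a\|_{BMO}$. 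The $B$-average of the second (near) piece is bounded by $c\|a\|_{BMO}M_r(f)(x)$ via Kolmogorov's inequality, using that $T$ is weak $(1,1)$ by Theorem \ref{sin lp}, combined with John-Nirenberg applied on $\lambda B$.

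The main obstacle is the far piece, where one must bound $|\mathrm{III}(y)-\mathrm{III}(x_0)|$ uniformly in $y\in B$. Using the H\"older regularity (\ref{standard 2}) of $K$ in its first argument, this quantity is majorized by
\[
c\int_{(\lambda B)^c}\frac{1}{\mu(B(x_0,\rho(x_0,w)))}\Bigl(\frac{\rho(x_0,y)}{\rho(x_0,w)}\Bigr)^{\beta}|a(w)-a_B|\,|f(w)|\,d\mu(w).
\]
Splitting $(\lambda B)^c$ into dyadic annuli $A_k=\{w:2^kR\leq\rho(x_0,w)<2^{k+1}R\}$ and applying John-Nirenberg on each enlarged ball $2^{k+1}B$, together with the telescoping estimate $|a_{2^{k+1}B}-a_B|\leq c\,k\,\|a\|_{BMO}$ and the local doubling property, the contribution of each annulus is shown to be $\leq c(1+k)2^{-k\beta}\|a\|_{BMO}M_r(f)(x)$. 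The geometric factor $2^{-k\beta}$ absorbs the logarithmic growth $k$, so the sum in $k$ converges, yielding the bound $c\|a\|_{BMO}M_r(f)(x)$. Taking the supremum over balls $B\ni x$ completes the proof of the sharp-maximal estimate, and the theorem follows.
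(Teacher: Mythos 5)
This statement is one of the Appendix theorems that the paper records without proof: it is explicitly attributed to \cite{BC1} (Bramanti--Cerutti, \emph{Boll.\ Un.\ Mat.\ Ital.}\ 1996, Thm.~2.5 and Thm.~3.1), so there is no in-paper argument to compare yours against. Your proposal is therefore an independent reconstruction, and it follows the standard sharp-maximal route, which is indeed the usual way such commutator bounds are proved in spaces of homogeneous type, and is consistent with the method in \cite{BC1}.

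The outline is essentially correct: the identity $C_a f=T((a-a_B)f)-(a-a_B)Tf$, the near/far splitting of $(a-a_B)f$, the choice of normalizing constant as the far piece at the center, H\"older plus John--Nirenberg for term I, Kolmogorov plus the weak $(1,1)$ bound for term II, and H\"older regularity of $K$ with dyadic annuli and the telescoping $|a_{2^{k+1}B}-a_B|\lesssim k\|a\|_{BMO}$ for term III are exactly the right ingredients, and the geometric factor $2^{-k\beta}$ does kill the logarithmic loss. Two small points deserve care if you write this out in full. First, Kolmogorov's inequality for a weak $(1,1)$ operator controls the $L^s$-average of $Tg$ over $B$ only for $s<1$, not the $L^1$-average; so the pointwise inequality should really be stated for the variant sharp maximal function $M^{\#}_s g=M^{\#}(|g|^s)^{1/s}$ (equivalently, using $\inf_c\bigl(\tfrac{1}{\mu(B)}\int_B|g-c|^s\bigr)^{1/s}$ with $0<s<1$), together with the corresponding Fefferman--Stein inequality $\|Mg\|_{L^p}\leq c\|M^{\#}_s g\|_{L^p}$; this is standard but should be said. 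Second, the statement allows $\left(X,\rho,\mu\right)$ to be a bounded homogeneous space, and in that case the Fefferman--Stein inequality as you quoted it fails for constants; one must either normalize by subtracting the global average of $C_a f$ or invoke the version with an a priori integrability hypothesis on $Mg$. Both issues are routine and do not undermine the strategy, but they are exactly the kind of technical detail that a complete proof in this generality must address.
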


Next, let us recall the analog results for fractional or more general type of
nonsingular integral operators. A key point in the following result is the
presence of an absolute value inside the integral, which is allowed by the
positivity of the kernel:

\begin{theorem}
[Commutators of fractional integrals]\label{Thm comm frac hom}Let $\left(
X,\rho,\mu\right)  $ be a homogeneous space and let $K_{\alpha},I_{\alpha}$ be
as in Theorem \ref{frac lp}. For any function $a\in BMO\left(  X\right)  ,$
let
\[
C_{a}f\left(  x\right)  =\int_{X}K_{\alpha}\left(  x,y\right)  \left\vert
a\left(  x\right)  -a\left(  y\right)  \right\vert f\left(  y\right)
d\mu\left(  y\right)
\]
be the \textquotedblleft positive commutator\textquotedblright\ of $I_{\alpha
}$ with $a$. Then for any $p\in\left(  1,\frac{1}{\alpha}\right)  ,\frac{1}%
{q}=\frac{1}{p}-\alpha$ there exists $c=c\left(  X,\alpha,p\right)  $ such
that%
\[
\left\Vert C_{a}f\right\Vert _{L^{q}\left(  X\right)  }\leq c\left\Vert
a\right\Vert _{BMO\left(  X\right)  }\left\Vert f\right\Vert _{L^{p}\left(
X\right)  }.
\]

\end{theorem}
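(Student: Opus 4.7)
The strategy is to obtain a pointwise bound for $C_a f(x)$ of the form $\|a\|_{BMO(X)}$ times suitable fractional maximal functions of $f$, and then to invoke the known Hardy--Littlewood--Sobolev type $L^p$--$L^q$ bounds for those maximal operators in spaces of homogeneous type.

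First, using the size bound $K_\alpha(x,y) \leq C\,\mu(B(x,\rho(x,y)))^{\alpha-1}$ together with doubling, I would decompose the integral in $y$ over the dyadic annuli $A_k(x)=\{y : 2^{k-1}\leq \rho(x,y)<2^k\}$ and replace the kernel by its supremum on each annulus, obtaining
$$C_a f(x) \;\leq\; C\sum_{k\in\mathbb{Z}} \mu(B_k)^{\alpha-1} \int_{B_k} |a(x)-a(y)|\, f(y)\,d\mu(y),$$
with $B_k:=B(x,2^k)$. Writing $a_k$ for the average of $a$ over $B_k$ and splitting $|a(x)-a(y)|\leq |a(x)-a_k|+|a(y)-a_k|$ decomposes $C_a f(x)$ as $T_1 f(x)+T_2 f(x)$.

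For $T_2$ I would pick $s\in(1,p)$ and apply Hölder's inequality with exponents $(s,s')$ together with the John--Nirenberg inequality on $(X,\rho,\mu)$, which yields $\bigl(\mu(B_k)^{-1}\int_{B_k}|a-a_k|^{s'}d\mu\bigr)^{1/s'}\leq C_s\|a\|_{BMO(X)}$. Summing the resulting geometric-in-$k$ series and passing to the supremum over scales produces the pointwise bound
$$T_2 f(x) \;\leq\; C\,\|a\|_{BMO(X)}\, M_{\alpha,s} f(x), \qquad M_{\alpha,s}f(x):=\sup_{B\ni x}\mu(B)^{\alpha}\Bigl(\mu(B)^{-1}\!\int_B f^s\,d\mu\Bigr)^{1/s}.$$
Since $M_{\alpha,s}f=(M_{s\alpha}(f^s))^{1/s}$, where $M_{s\alpha}$ is the standard fractional maximal operator of order $s\alpha$, the known $L^{p/s}$--$L^{q/s}$ bound for $M_{s\alpha}$ (valid because $1<p/s<1/(s\alpha)$ for $s$ chosen sufficiently close to $1$) gives the desired bound for $T_2$.

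The main obstacle is $T_1$, since the factor $|a(x)-a_k|$ is not controlled pointwise by $\|a\|_{BMO(X)}$. I would proceed in the spirit of Coifman--Rochberg--Weiss and Chanillo: telescope $a(x)-a_k=\sum_{j\leq k}(a_{j-1}-a_j)+(a(x)-\lim_{j\to-\infty}a_j)$, using $|a_{j-1}-a_j|\leq C\|a\|_{BMO(X)}$ from the doubling condition and the fact that $a_{B(x,r)}\to a(x)$ at every Lebesgue point. Interchanging the order of summation in the resulting double sum, and then summing a geometric-in-$k$ series via doubling, produces a bound of the shape $T_1 f(x)\leq C\|a\|_{BMO(X)}\,M_\alpha f(x)$ up to absorbing harmless terms; the rigorous justification of the interchange and convergence of the telescoped series is the delicate point, standard in the Euclidean setting and transplanted here via the doubling condition (H6) only. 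Combining the bounds for $T_1$ and $T_2$ with the $L^p$--$L^q$ boundedness of $M_\alpha$ and $M_{\alpha,s}$ (Theorem \ref{frac lp} applied to a positive fractional-type kernel) then yields the stated estimate with constant depending on $X$, $p$, $\alpha$ alone.
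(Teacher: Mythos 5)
The paper does not actually prove this theorem: it appears in the Appendix as a known result, and the authors simply cite \cite[Thm.~2.11]{BC2} and \cite[Thm.~3.3, Thm.~3.7]{B1}. So there is no ``paper's own proof'' to compare with, and your attempt must be judged on its own terms. Unfortunately it contains a genuine gap, and it is not a small one.

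The problem is that neither of the pointwise bounds you claim, $T_1 f(x)\lesssim \|a\|_{BMO}\,M_\alpha f(x)$ and $T_2 f(x)\lesssim \|a\|_{BMO}\,M_{\alpha,s} f(x)$, is true, because the dyadic annulus decomposition produces a \emph{sum} over scales $k$, not a supremum, and that sum has no geometric decay in $k$. This failure is already present for the plain fractional integral, before any commutator enters: in $\mathbb{R}^n$ take $f(y)=|y|^{-\beta}\chi_{\{|y|<1\}}$ with $\beta$ slightly below $n\alpha$. A direct computation gives $M_\alpha f(0)\approx c/(n-\beta)$, which stays bounded, while $I_\alpha f(0)\approx c'/(n\alpha-\beta)\to\infty$ as $\beta\uparrow n\alpha$. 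In the dyadic picture, $\mu(B_k)^{\alpha-1}\int_{B_k}f$ decays like $2^{k(n\alpha-\beta)}$ as $k\to-\infty$: the decay rate degenerates, and the sum over $k$ is much larger than the supremum. Doubling alone (assumption (H6)) gives no lower bound $\mu(B_{k+1})\geq(1+\varepsilon)\mu(B_k)$, so in a general space of homogeneous type there is not even a qualitative reason for geometric decay. Thus your $T_2$ estimate fails at the step ``summing the resulting geometric-in-$k$ series and passing to the supremum''; and $T_1$ is even worse, since the telescoped factor $|a(x)-a_k|$ grows like $|k-k_0|\,\|a\|_{BMO}$ away from a reference scale $k_0$, which destroys any hope of summing the double series. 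Said differently: the commutator of a fractional integral is genuinely more singular than $M_\alpha$ (it is not of weak type $(1,1/(1-\alpha))$, for example), so no pointwise domination by a fractional maximal function can hold.

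The standard ways to make this theorem work are either (a) a Fefferman--Stein argument, bounding the sharp maximal function $(C_a f)^{\#}$ pointwise by $\|a\|_{BMO}$ times $M_{\alpha,s}f$ (this is where Hölder and John--Nirenberg legitimately enter, applied to a single ball rather than summed over all scales), then invoking the good-$\lambda$ inequality; or (b) the Coifman--Rochberg--Weiss device: prove a weighted $L^p(w^p)\to L^q(w^q)$ bound for $I_\alpha$ with $A_{p,q}$ weights, write $C_a f$ (or its complex-analytic analogue $e^{za}I_\alpha(e^{-za}f)$) and differentiate in $z$, exploiting that $e^{\epsilon a}$ is an $A_{p,q}$ weight when $a\in BMO$ and $\epsilon$ is small. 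The latter is close in spirit to what \cite{B1}, cited by the paper, actually does. Your decomposition into $T_1+T_2$ via $|a(x)-a_k|+|a(y)-a_k|$ is the right first step for approach (a), but it must be applied locally on a fixed ball to estimate the oscillation $(C_a f)^{\#}(x)$, not globally and then summed pointwise.

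A minor further point: once a pointwise estimate of the type you sought is abandoned, one must worry about the a priori finiteness of $C_a f$ in order to apply Fefferman--Stein; this is handled by first proving the estimate for bounded compactly supported $f$ and truncated $a$, and passing to the limit, which requires a short but necessary argument.
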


The above theorem has been first proved in \cite[Thm. 2.11]{BC2} under an
extra assumption on the space $\left(  X,\rho\right)  $ and then, in this full
generality, in \cite[Thm. 3.3, Thm. 3.7]{B1}.

An analog result holds for any abstract operator with positive kernel, which
we already know to be $L^{p}$ continuous:

\begin{theorem}
\label{Thm comm pos hom}(See \cite[Thm. 0.1]{B1}). Let $\left(  X,\rho
,\mu\right)  $ be a homogeneous space and $K\left(  x,y\right)  $ be a
nonnegative kernel such that the operator%
\[
Tf\left(  x\right)  =\int_{X\setminus\left\{  x\right\}  }K\left(  x,y\right)
f\left(  y\right)  d\mu\left(  y\right)
\]
maps $L^{p}\left(  X\right)  $ into $L^{p}\left(  X\right)  $ for $p\in\left(
1,\infty\right)  .$ Also, assume $K$ satisfies the standard estimate
(\ref{standard 2}) for $\nu=0,$ some $\beta>0,M>1$, any $x_{0},x,y\in X$ with
$\rho(x_{0},y)>M\rho(x_{0},x).$ For $a\in BMO\left(  X\right)  $, let
\[
C_{a}f\left(  x\right)  =\int_{X}K\left(  x,y\right)  \left\vert a\left(
x\right)  -a\left(  y\right)  \right\vert f\left(  y\right)  d\mu\left(
y\right)  .
\]
Then there exists $c=c\left(  X,p\right)  $ such that%
\[
\left\Vert C_{a}f\right\Vert _{L^{p}\left(  X\right)  }\leq c\left\Vert
a\right\Vert _{BMO\left(  X\right)  }\left\Vert f\right\Vert _{L^{p}\left(
X\right)  }.
\]

\end{theorem}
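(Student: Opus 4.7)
The plan is to mimic the classical Coifman-Rochberg-Weiss argument, adapted to the positive-commutator setting via the sharp maximal function $M^{\#}$ in the homogeneous space $X$. The positivity of $K$ together with the triangle inequality $|a(z)-a(y)|\leq|a(z)-a_{\widetilde{B}}|+|a(y)-a_{\widetilde{B}}|$ gives the key pointwise splitting
\[
C_{a}f(z)\leq|a(z)-a_{\widetilde{B}}|\,T(|f|)(z)+T(|a-a_{\widetilde{B}}|\,|f|)(z)
\]
for any ball $\widetilde{B}$. This is what allows us to bypass the absence of a cancellation condition: the entire proof reduces to standard BMO manipulations (John-Nirenberg, Hölder) once one combines this splitting with the smoothness assumption (\ref{standard 2}).

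First I would set up the sharp-function estimate. For a fixed ball $B=B(x_{0},r)$ and $x\in B$, pick $\widetilde{B}=B(x_{0},\kappa r)$ with $\kappa$ large enough (depending on $M$ and the quasitriangle constant) that, for every $z\in B$ and $y\notin\widetilde{B}$, the inequality $\rho(x_{0},y)>M\rho(x_{0},z)$ holds, so that (\ref{standard 2}) is applicable. Split $f=f_{1}+f_{2}$ with $f_{1}=f\chi_{\widetilde{B}}$, choose the centering constant $c_{B}=T(|a-a_{\widetilde{B}}|f_{2})(x_{0})$, and bound $\frac{1}{\mu(B)}\int_{B}|C_{a}f(z)-c_{B}|\,d\mu(z)$ by three terms $I, II, III$ arising respectively from the first summand in the pointwise splitting and from the two pieces $f_{1},f_{2}$ of the second summand. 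Term $I$ is controlled by Hölder together with John-Nirenberg applied to $|a-a_{\widetilde{B}}|$, yielding $I\leq c\|a\|_{BMO}\,M(|Tf|^{s})(x)^{1/s}$ for any $s>1$. Term $II$ is handled using the assumed $L^{s}$-boundedness of $T$, Hölder, and another John-Nirenberg application, giving $II\leq c\|a\|_{BMO}\,M(|f|^{s_{1}})(x)^{1/s_{1}}$ for some $s_{1}>1$.

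The delicate term is $III$, where one exploits the smoothness of the kernel to compare $T(|a-a_{\widetilde{B}}|f_{2})(z)$ with its value at $x_{0}$. Decompose $X\setminus\widetilde{B}$ into dyadic annuli $A_{k}=\{2^{k-1}\kappa r\leq\rho(x_{0},\cdot)<2^{k}\kappa r\}$; on $A_{k}$ the kernel bound (\ref{standard 2}) contributes a geometric factor $2^{-k\beta}$, while on the enlarged ball $2^{k}\widetilde{B}$ the BMO oscillation produces an extra factor $(1+k)\|a\|_{BMO}$ coming from the standard estimate $|a_{2^{k}\widetilde{B}}-a_{\widetilde{B}}|\leq ck\|a\|_{BMO}$ and Hölder. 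The series $\sum_{k}(1+k)2^{-k\beta}$ converges, local averages against $|f|$ are absorbed into $M(|f|^{s})(x)^{1/s}$ via Hölder, and doubling is used to compare $\mu(B(x_{0},2^{k}\kappa r))$ across annuli, producing $III\leq c\|a\|_{BMO}\,M(|f|^{s})(x)^{1/s}$.

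Combining the three bounds gives the pointwise inequality
\[
M^{\#}(C_{a}f)(x)\leq c\|a\|_{BMO}\bigl[M(|Tf|^{s})(x)^{1/s}+M(|f|^{s_{1}})(x)^{1/s_{1}}\bigr].
\]
The Fefferman-Stein sharp-function inequality in spaces of homogeneous type, together with the $L^{p/s}$-boundedness of the Hardy-Littlewood maximal function for $s,s_{1}<p$, then yields $\|C_{a}f\|_{L^{p}}\leq c\|a\|_{BMO}(\|Tf\|_{L^{p}}+\|f\|_{L^{p}})$, and invoking the assumed $L^{p}$-boundedness of $T$ closes the estimate; freedom in choosing $s,s_{1}$ close to $1$ covers the full range $p\in(1,\infty)$. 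To apply Fefferman-Stein rigorously one needs $C_{a}f\in L^{p}$ a priori, achieved by first proving the estimate for bounded compactly supported $f$ (with a truncated kernel), and then passing to the limit. The main obstacle is term $III$: the choice of $\kappa$ must be calibrated so that (\ref{standard 2}) genuinely applies on every annulus, and the linear-in-$k$ BMO growth must be traded against the geometric decay $2^{-k\beta}$ uniformly in $r$, which is where the interplay between the smoothness exponent $\beta$ and the John-Nirenberg constants becomes crucial.
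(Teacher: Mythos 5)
This theorem appears in the paper's Appendix as a quoted known result, attributed to \cite[Thm. 0.1]{B1}; the paper itself provides no proof, so there is no in-text argument to compare against. Your reconstruction is nonetheless the correct standard one and, to the best of my knowledge, is the strategy used in the cited reference: exploit positivity of $K$ and nonnegativity of $|a(x)-a(y)|$ to obtain the pointwise splitting $C_a|f|(z)\leq |a(z)-a_{\widetilde B}|\,T(|f|)(z)+T(|a-a_{\widetilde B}||f|)(z)$, feed this into a sharp-maximal-function (Fefferman--Stein / Coifman--Rochberg--Weiss) argument with $f=f_1+f_2$ and a centering constant $c_B=T(|a-a_{\widetilde B}|f_2)(x_0)$, estimate the near part via the assumed $L^{s_1}$-boundedness of $T$ and John--Nirenberg, and estimate the far part via dyadic annuli and (\ref{standard 2}) to extract the factor $\sum_k(1+k)2^{-k\beta}<\infty$. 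The geometric choice of $\widetilde B=B(x_0,\kappa r)$ with $\kappa$ tied to $M$ and the quasitriangle constant is precisely what makes (\ref{standard 2}) applicable on every annulus, and you state this correctly.

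Two minor technical points you gloss over but which are standard to repair: (a) in a general homogeneous space the Fefferman--Stein inequality $\|g\|_{L^p}\leq c\|M^\#g\|_{L^p}$ requires $\mu(X)=\infty$ or a suitable normalizing correction term when $\mu(X)<\infty$, so one should either separate the two cases or use the form $\|g\|_{L^p}\leq c(\|M^\#g\|_{L^p}+\mu(X)^{-1}\|g\|_{L^1})$ when $X$ is bounded; (b) the splitting in the sharp-function estimate needs both the upper and lower bounds $|C_af_2(z)-T(|a-a_{\widetilde B}|f_2)(z)|\leq|a(z)-a_{\widetilde B}|T(|f_2|)(z)$, not just the one-sided inequality displayed at the start, so that $|C_af(z)-c_B|$ and not merely $C_af(z)-c_B$ is controlled. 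Neither is a real gap — both follow from the same triangle inequality manipulation you already invoke — so the proposal is essentially sound and matches the intended proof.
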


\bigskip

\bigskip

Marco Bramanti

Dipartimento di Matematica, Politecnico di Milano

Via Bonardi 9. 20133 Milano. ITALY

marco.bramanti@polimi.it

\bigskip

Maochun Zhu

Department of Applied Mathematics, Northwestern Polytechincal University

127 West Youyi Road. 710072 Xi'an. P. R. CHINA

zhumaochun2006@126.com

\end{document}